\numberwithin{equation}{section}
\newcommand{\lyxaddress}[1]{
\par {\raggedright #1
\vspace{1.4em}
\noindent\par}
}
\theoremstyle{plain}
\newtheorem{thm}{\protect\theoremname}[section]
  \theoremstyle{remark}
  \newtheorem*{notation*}{\protect\notationname}
  \theoremstyle{plain}
  \newtheorem{lem}[thm]{\protect\lemmaname}
  \theoremstyle{definition}
  \newtheorem{defn}[thm]{\protect\definitionname}
  \theoremstyle{plain}
  \newtheorem{prop}[thm]{\protect\propositionname}
  \theoremstyle{remark}
  \newtheorem{rem}[thm]{\protect\remarkname}
  \theoremstyle{remark}
  \newtheorem*{acknowledgement*}{\protect\acknowledgementname}
\tikzset{
    state/.style={
           rectangle,
           rounded corners,
           draw=black, very thick,
           minimum height=2em,
           inner sep=2pt,
           text centered,
           },
}
\newcommand{\dce}{\mathfrak{E}}
\newcommand{\dct}{\mathfrak{T}}
  \providecommand{\acknowledgementname}{Acknowledgement}
  \providecommand{\definitionname}{Definition}
  \providecommand{\lemmaname}{Lemma}
  \providecommand{\notationname}{Notation}
  \providecommand{\propositionname}{Proposition}
  \providecommand{\remarkname}{Remark}
\providecommand{\theoremname}{Theorem}
\begin{document}
\title{A stable self-similar singularity of evaporating drops: ellipsoidal collapse to a point}
\author{
Marco A. Fontelos\\
Seok Hyun Hong\\
Hyung Ju Hwang
}
\maketitle

\lyxaddress{Instituto de Ciencias Matemáticas (ICMAT), C/Nicolás Cabrera, 28049
Madrid, Spain\\ E-mail: marco.fontelos@icmat.es}

\lyxaddress{Department of Mathematics, Pohang University of Science and Technology,
Pohang 790-784, Republic of Korea\\ E-mail: inrhg@postech.ac.kr}

\lyxaddress{Department of Mathematics, Pohang University of Science and Technology,
Pohang 790-784, Republic of Korea\\ E-mail: hjhwang@postech.ac.kr}
\begin{abstract}
We study the problem of evaporating drops contracting to a point.
Going back to Maxwell and Langmuir, the existence of a spherical solution
for which evaporating drops collapse to a point in a self-similar
manner is well established in the physical literature. The diameter
of the drop follows the so-called $D^{2}$ law: the second power of
the drop-diameter decays linearly in time. In this study we provide
a complete mathematical proof of this classical law. We prove that
evaporating drops which are initially small perturbations of a sphere
collapse to a point and the shape of the drop converges to a self-similar
ellipsoid whose center, orientation, and semi-axes are determined
by the initial shape.
\end{abstract}
\tableofcontents{}

\section{Introduction}

This paper addresses the question on the leading-order mechanism of
the dynamics of evaporating drops. This question has attracted a great
deal of attention in the physics society. The two situations that
have been studied most are the ``coffee ring'' problem \cite{Deegan:1997vb}
and the evaporation of a completely wetting liquid on a perfectly
flat surface (\cite{Bonn:2009ha} and references therein). Our study
is motivated by the case where a drop wets completely but evaporation
is significant. Many experiments reveal that the radius of such a
drop goes to zero in finite time, with a characteristic scaling exponent
close to $1/2$ (see \cite{Cachile:2002dl}, \cite{Cachile:2002ft},
\cite{Poulard:2003kx}). The question is thus on the origin of the
exponent $1/2$.

Maxwell \cite{Maxwell:1877tf} and Langmuir \cite{PhysRev.12.368}
studied evaporation based on the following simplifying hypotheses:
the evaporation process is diffusion-controlled, quasi-stationary,
and isothermal; the interface of the drop is at local equilibrium
(see \cite{Erbil:2012gy} for an historical account). These assumptions
and the spherical symmetry lead to the classical $D^{2}$ law \cite{Fuchs:1959vw}:
the square of the drop-diameter $D$ decreases linearly in time. In
fact, such hypotheses are essentially related to the $1/2$ exponent
because the $D^{2}$ law is exactly the case of the scaling exponent
$1/2$. Moreover, in many situations the evaporation of drops is well
described by these hypotheses (e.g. \cite{Deegan:1997vb}, \cite{Hu:2002hs},
\cite{Eggers:2010fh}, \cite{Gelderblom:2012by}). The assumptions
can therefore be regarded as a leading-order approximation of more
complex behavior (see \cite{ShahidzadehBonn:2006fb}, \cite{Bonn:2009ha},
\cite{Eggers:2010fh}, \cite{Cazabat:2010bd}, and references therein).

In this study we provide a complete mathematical proof of the $D^{2}$
law built on the simplifying hypotheses. The spherical symmetry will
not be assumed and it will be verified that evaporating drops initially
close to a sphere collapse to a point in finite time. The way to collapse
obeys the $D^{2}$ law.

We first introduce a precise mathematical formulation of the problem.
Consider a drop that occupies an open, bounded, and simply connected
domain $\Omega(t)\subset\mathbb{R}^{3}$, where $t$ denotes the time-variable.
In the outside of the drop, there is the vapor concentration $\phi(\mathbf{x},t)$,
$\mathbf{x}\in\mathbb{R}^{3}$. The characteristic time of the evaporation
process is much larger than the time that characterizes transfer rates
across the interface of the drop. As a result, the concentration satisfies
Laplace's equation at each time, i.e. $\Delta\phi=0$. Moreover, the
concentration reaches local equilibrium at the drop interface, i.e.
$\phi=1$ on the boundary of $\Omega(t)$ without loss of generality.
Then the concentration far from the drop should be lower than 1, and
hence we assume that $\phi\rightarrow0$ as $|\mathbf{x}|\rightarrow\infty$.
In the inside of the drop, the liquid satisfies the Stokes equations
because the Reynolds number of the drop is sufficiently small. Since
the process is isothermal, only the normal stress must be balanced
at the interface. The two conservation laws hold for the drop: linear
momentum and angular momentum. The liquid-vapor interface will be
moved by the conservation of mass at the interface. Here, the evaporative
flux is proportional to the concentration gradient, i.e. $\frac{1}{2}\frac{\partial\phi}{\partial\mathbf{n}}$,
where $\frac{1}{2}$ is just for the simplicity of calculations and
$\mathbf{n}$ is the outward normal of the boundary of $\Omega(t)$.
Finally we are led to the following system:
\begin{equation}
\begin{cases}
-\nabla p+\Delta\mathbf{u}=0 & \textnormal{in}\,\Omega(t),\\
\operatorname{div}\mathbf{u}=0 & \textnormal{in}\,\Omega(t),\\
\mathbf{T}[\mathbf{u},p]\mathbf{n=-\sigma\kappa\mathbf{n}} & \textnormal{on}\,\Gamma(t),\\
\int_{\Omega(t)}\mathbf{u}dV=0,\\
\int_{\Omega(t)}(\mathbf{u}\times\mathbf{x})dV=0,\\
\Delta\phi=0 & \textnormal{in}\,\mathbb{R}^{3}\setminus\overline{\Omega}(t),\\
\phi=1 & \textnormal{on}\,\Gamma(t),\\
\phi\rightarrow0 & \textnormal{as}\,|\mathbf{x}|\rightarrow\infty,
\end{cases}\label{eq:ED system primary}
\end{equation}
\begin{align}
 & v_{\mathbf{n}}=\mathbf{u}\cdot\mathbf{n}+\frac{1}{2}\frac{\partial\phi}{\partial\mathbf{n}}\quad\textnormal{on}\,\Gamma(t),\label{eq:ED kinematic condition primary}\\
 & \Gamma(0)=\Gamma^{0},\label{eq:ED initial condition primary}
\end{align}
where $\Gamma(t)$ denotes the boundary of $\Omega(t)$, $\Gamma^{0}$
is the initial shape of the free boundary, $\mathbf{T}[\mathbf{u},p]=-p\mathbf{I}+\nabla\mathbf{u}+(\nabla\mathbf{u})^{T}$
is the stress tensor, $\sigma$ is a given positive constant, $\kappa$
is the mean curvature of $\Gamma(t)$ ($\kappa$ is positive for a
sphere), and $v_{\mathbf{n}}$ is the velocity of the free boundary
in the direction of the outward normal.

From the mathematical point of view, the problem (\ref{eq:ED system primary})-(\ref{eq:ED initial condition primary})
belongs to a class of free-boundary problems. Similar mathematical
problems arise in various contexts in physics and biology: displacement
of oil by water in porous media \cite{ISI:000187862600010}, evolution
of uncharged or charged droplets (\cite{Gunther:1997gs}, \cite{Friedman:2002io},
\cite{Friedman:2002en}, \cite{Fontelos:2004cz}), evolution of tumors
(\cite{Bazaliy:2003ic}, \cite{Fontelos:2003wm}, \cite{Friedman:2006bt},
\cite{Friedman:2006ce}, \cite{Friedman:2007bw}, \cite{Friedman:2008ei}),
melting of spherical crystals \cite{Herrero:1997te}, loop dislocations
in crystals \cite{Friedman:1992jw}, Stefan problems (\cite{Chen:1993dv},
\cite{Constantin:1993vw}, \cite{Escher:1998gj}, \cite{Friedman:2001vk},
\cite{Pruss:2008bt}, \cite{Hadzic:2011hn}, \cite{Pruss:2013fu}),
and Ostwald ripening \cite{Alikakos:2003fo}, just to name a few.
However, there is another interesting mathematical aspect in evaporating
drops: formation of a finite-time singularity. Evaporating drops lose
their mass but they retain constant density (we have assumed that
the process is isothermal). As a consequence, the drop vanishes in
a finite time and the boundary of the drop might collapse to a point,
a collection of points, or a more complicated geometric object. Since
there is a loss of regularity of the boundary in this process, a singularity
happens and one has to resolve it with available mathematical tools
\cite{Eggers:2009ha}.

The system (\ref{eq:ED system primary})-(\ref{eq:ED initial condition primary})
has a self-similar solution given by 
\begin{equation}
\begin{cases}
p=\sigma(1-t)^{-1/2},\\
\mathbf{u}=\mathbf{0},\\
\phi=|\mathbf{x}|^{-1}(1-t)^{1/2},\\
\Gamma(t)=\{|\mathbf{x}|=(1-t)^{1/2}\}.
\end{cases}\label{eq:ED self-similar solution}
\end{equation}
The solution (\ref{eq:ED self-similar solution}) implies that the
drop vanishes at the extinction time $t=1$. One can easily check
that the only self-similar solutions of (\ref{eq:ED system primary})-(\ref{eq:ED initial condition primary})
with a power-law structure are the ones given by (\ref{eq:ED self-similar solution}).
Moreover, $|\mathbf{x}|^{2}=(1-t)$ means that $D^{2}$ decays linearly
in time, i.e. the $D^{2}$ law is exactly satisfied by (\ref{eq:ED self-similar solution}).
The main aim of this study is therefore to prove the stability of
the self-similar solution (\ref{eq:ED self-similar solution}).

We now state our main result. We will begin with some notations. In
this paper we will formulate the interface of the drop in terms of
spherical coordinate systems, for example,
\[
\Gamma(t)=\left\{ r=f(\theta,\varphi,t)\right\} ,
\]
where $(r,\theta,\varphi)$ is a spherical coordinate system with
respect to the origin and $f$ is a real function whose variables
$(\theta,\varphi)$ are defined on $\mathbb{S}=[0,\pi]\times[0,2\pi)$.
Then $H^{s}(\mathbb{S})$ with $s\geq0$ denote the Sobolev spaces.
The spherical harmonics $Y_{l,m}$ are on $\mathbb{S}$, and $f=\sum_{l,m}f_{l,m}Y_{l,m}$
is the spherical harmonic expansion as defined in Appendix \ref{sec:S Harmonics and Vector S Harmonics}.
Finally the main result reads as follows: the self-similar solution
(\ref{eq:ED self-similar solution}) is stable under small perturbations
in $H^{s}(\mathbb{S})$, for some appropriate $s\geq0$. Moreover,
the drop shape near extinction is universally an ellipsoid.
\begin{thm}[Stable ellipsoidal collapse to a point]
\label{thm:stable ellipsoidal collapse to a point}Assume that the
initial shape $\Gamma^{0}$ is given by $\{r=1+\epsilon g^{0}(\theta,\varphi)\}$,
where $g^{0}\in H^{6}(\mathbb{S})$. Then, for a sufficiently small
$\epsilon>0$, there exists a unique solution to the free-boundary
problem (\ref{eq:ED system primary})-(\ref{eq:ED initial condition primary})
such that 
\begin{equation}
\frac{\Gamma(t)-\epsilon\mathbf{x}_{0}}{R(t)}=\left\{ r=1+\epsilon g^{\xi}(\theta,\varphi,t)\right\} ,\quad R(t)=\left[1-\frac{t}{1+\epsilon t_{0}(2\sqrt{\pi})^{-1}}\right]^{1/2}\label{eq:thm1.1 free boundary solution}
\end{equation}
on the time interval $0\leq t<1+\epsilon t_{0}(2\sqrt{\pi})^{-1}$
for some $(\mathbf{x}_{0},t_{0})=(\mathbf{x}_{0}(\epsilon),t_{0}(\epsilon))\in\mathbb{R}^{3}\times\mathbb{R}$,
and we have:

\begin{enumerate}[label=(\alph*)]

\item The solution $\left\{ r=1+\epsilon g^{\xi}(\theta,\varphi,t)\right\} $,
as well as the corresponding $(\mathbf{u},p,\phi)$, of the transformed
problem is unique in the function space $\mathscr{X}$ to be defined
in Theorem \ref{thm:the nonlinear evolution problem}.

\item The quantity $(\mathbf{x}_{0},t_{0})$ satisfies
\begin{equation}
|\mathbf{x}_{0}-\tilde{\mathbf{x}}_{0}|=O(\epsilon),\quad|t_{0}-\tilde{t}_{0}|=O(\epsilon),\label{eq:x0 t0 thm1.1}
\end{equation}
where $(\tilde{\mathbf{x}}_{0},\tilde{t}_{0})\in\mathbb{R}^{3}\times\mathbb{R}$
merely depends on the $l\leq2$ terms in the spherical harmonic expansion
of $g^{0}$.

\item There exist constants $C>0$ and $0<\lambda_{0}<1$ such that
\begin{equation}
\sup_{(\theta,\varphi)\in\mathbb{S}}|g^{\xi}(\theta,\varphi,t)-g^{\mathfrak{E}}(\theta,\varphi)|\leq CR^{\lambda_{0}}(t),\quad\textnormal{for all}\,\, t\in\left[0,1+\epsilon t_{0}(2\sqrt{\pi})^{-1}\right),\label{eq:thm1.1 converging to ellipsoid}
\end{equation}
where $\left\{ r=1+\epsilon g^{\mathfrak{E}}(\theta,\varphi)\right\} $
is an ellipsoid which satisfies
\begin{equation}
\sup_{(\theta,\varphi)\in\mathbb{S}}|g^{\mathfrak{E}}(\theta,\varphi)-g_{0,0}^{0}Y_{0,0}-\sum_{|m|\leq2}e^{-\sigma\frac{20}{19}}g_{2,m}^{0}Y_{2,m}|=O(\epsilon).\label{eq:gdce thm1.1}
\end{equation}
\end{enumerate}
\end{thm}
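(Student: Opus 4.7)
The plan is to pass to self-similar variables adapted to a candidate extinction time $T=1+\epsilon t_{0}(2\sqrt{\pi})^{-1}$ and a candidate collapse point $\epsilon\mathbf{x}_{0}$, linearize the free-boundary system (\ref{eq:ED system primary})--(\ref{eq:ED initial condition primary}) about the spherical self-similar solution (\ref{eq:ED self-similar solution}), diagonalize the linearization in spherical harmonics, and close the full nonlinear problem by a contraction in $\mathscr{X}$, choosing $(\mathbf{x}_{0},t_{0})$ by a Lyapunov--Schmidt reduction so as to project out the neutral directions. Concretely I would parametrize the moving boundary as
\[
\Gamma(t) = \epsilon\mathbf{x}_{0} + R(t)\bigl\{r=1+\epsilon g^{\xi}(\theta,\varphi,\tau)\bigr\},\qquad R(t)=\sqrt{1-t/T},\qquad \tau = -\log R(t)\in[0,\infty),
\]
so that the spherical profile becomes a stationary point of an autonomous evolution equation on the unit sphere and the finite-time extinction $t\to T$ becomes $\tau\to\infty$.

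The linearized evolution inherits the rotational invariance of the sphere, so it diagonalizes in $\{Y_{l,m}\}$. Using a Lamb-type vector spherical harmonic decomposition to solve the interior Stokes problem, and the explicit exterior harmonics $|x|^{-l-1}Y_{l,m}$ to solve the Laplace problem for $\phi$, the kinematic condition (\ref{eq:ED kinematic condition primary}) becomes, mode by mode, a scalar equation $\partial_{\tau}g^{\xi}_{l,m} = \mu_{l}(\tau)\,g^{\xi}_{l,m} + N_{l,m}[g^{\xi}]$ with a quadratic remainder $N_{l,m}$. I expect the spectrum to organize as follows: the $l=0$ direction is neutral and corresponds to the freedom in $T$; the three $l=1$ directions are neutral and correspond to translations by $\mathbf{x}_{0}$; the five $l=2$ directions are marginally stable, carrying a $\tau$-integrable damping whose total integral equals $\sigma\cdot 20/19$; and the modes with $l\geq 3$ enjoy a uniform spectral gap, giving exponential decay $e^{-\lambda_{l}\tau}=R(t)^{\lambda_{l}}$ with $\lambda_{0}\leq\min_{l\geq 3}\lambda_{l}$. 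The parameters $(\mathbf{x}_{0},t_{0})$ are then determined by requiring the projections of $g^{\xi}$ onto $Y_{0,0}$ and onto each $Y_{1,m}$ to vanish; the implicit function theorem gives their existence and the estimates (\ref{eq:x0 t0 thm1.1}), with the leading-order values $(\tilde{\mathbf{x}}_{0},\tilde{t}_{0})$ read off the low-harmonic projections of $g^{0}$. The $l\geq 3$ block decays at rate $R^{\lambda_{0}}$, yielding (\ref{eq:thm1.1 converging to ellipsoid}), while the $l=2$ block integrates explicitly against its integrable damping to produce the limiting ellipsoid $g^{\mathfrak{E}}$ with the explicit attenuation factor $e^{-\sigma\cdot 20/19}$ of (\ref{eq:gdce thm1.1}).

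The main difficulty is the nonlinear closure over $\tau\in[0,\infty)$ in the presence of the five marginal $l=2$ directions, which provide no dissipation to absorb growing errors. The Dirichlet-to-Neumann maps for the exterior Laplace problem and for the interior Stokes problem each cost derivatives of the graph $g^{\xi}$, and the mean curvature $\kappa$ is itself a second-order operator on $g^{\xi}$, so $N_{l,m}$ loses up to two derivatives; the assumption $g^{0}\in H^{6}(\mathbb{S})$ is exactly what leaves enough room for the product, trace, and Moser-type estimates in $\mathscr{X}$ to close the contraction uniformly in $\tau$. A secondary issue is verifying that the implicit system defining $(\mathbf{x}_{0},t_{0})$ has a unique solution with the advertised $O(\epsilon)$ remainder uniformly in $\tau\to\infty$; this should reduce to a Banach fixed-point argument in $\epsilon$ using the non-degeneracy of the $l\leq 1$ projections together with the exponential-in-$\tau$ control of the nonlinear correction coming from the spectral gap.
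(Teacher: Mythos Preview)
Your overall architecture---self-similar variables, spherical-harmonic diagonalization, contraction in $\mathscr{X}$, and a Lyapunov--Schmidt choice of $(\mathbf{x}_{0},t_{0})$---matches the paper's strategy. However, two points in your spectral picture are off, and the second one is a genuine gap.

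First, the $l=0$ and $l=1$ modes are \emph{unstable}, not neutral. After rescaling, the kinematic condition gives $\partial_{\tau}g_{l,m}+(l-2)g_{l,m}+O(e^{-\tau})g_{l,m}=\cdots$, so $l=0$ carries rate $+2$ and $l=1$ carries rate $+1$. You cannot simply ``require the projections of $g^{\xi}$ onto $Y_{0,0}$ and $Y_{1,m}$ to vanish''; the nonlinearity feeds these modes at each $\tau$ and any nonzero input grows exponentially. What the paper does instead is a shooting condition: it fixes $t_{0}$ (through $g^{\mathfrak{E}}_{0,0}$) and $\mathbf{x}_{0}$ (through $b_{1,m}$) so that the coefficient of the growing solution in the variation-of-constants formula vanishes, leaving a bounded remainder $-e^{(2-l)\tau}\int_{\tau}^{\infty}e^{-(2-l)s}(\cdots)\,ds$. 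This is the Filippas--Kohn/Vel\'azquez mechanism for artifact-unstable modes, not a projection onto a kernel.

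Second, and more seriously, your treatment of $l=2$ does not close. You correctly note that the only damping at $l=2$ is the $\tau$-integrable term $2\sigma B(2)e^{-\tau}$, and that at the \emph{linear} level this produces the attenuation $e^{-\sigma\cdot 20/19}$. But at the nonlinear level the equation reads $\partial_{\tau}g^{\xi}_{2,m}=O(e^{-\tau})g^{\xi}_{2,m}+N_{2,m}[g^{\xi}]$, and if $g^{\xi}$ merely converges to some profile $g^{\infty}$ then $N_{2,m}[g^{\xi}]\to N_{2,m}[g^{\infty}]$, which is generically a nonzero constant; integrating yields linear-in-$\tau$ growth. No amount of Moser or product estimates fixes this: the issue is not regularity but the absence of any decay mechanism on the five-dimensional center. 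The paper's resolution is structural. It proves (via the Friedman--Sakai characterization of null quadrature domains and Kellogg's explicit solution of the ellipsoidal conductor problem) that the rescaled vapor problem possesses a six-parameter family of \emph{exact} stationary solutions $g^{\mathfrak{E}}$, all ellipsoids, parametrized precisely by $(g^{\mathfrak{E}}_{0,0},\{g^{\mathfrak{E}}_{2,m}\})$. It then decomposes $g=g^{\mathfrak{E}}+g^{\mathfrak{T}}$ and writes the nonlinearity as $f[g]-f[g^{\mathfrak{E}}]$, which is $O(\epsilon\,g^{\mathfrak{T}})$ and hence inherits the decay of $g^{\mathfrak{T}}$. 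The ellipsoid parameters are chosen (again by shooting, now at the zero eigenvalue) so that $g^{\mathfrak{T}}_{2,m}(\tau)\to 0$. Without this exact nonlinear center manifold of ellipsoids, your contraction on $[0,\infty)$ will not close in the $l=2$ block.
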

The proof of Theorem \ref{thm:stable ellipsoidal collapse to a point}
will be given at the end of Section \ref{sec:The linear evolution problem}.
In Section \ref{sec:Main theorems for the reformulated problem} we
will state the corresponding result to Theorem \ref{thm:stable ellipsoidal collapse to a point}
for the reformulated problem on the fixed domain. In Section \ref{sec:Reformulation of the problem}
we will introduce this reformulation.

Theorem \ref{thm:stable ellipsoidal collapse to a point} confirms
that evaporating drops with arbitrary shapes close to a sphere also
obey the $D^{2}$ law and the asymptotic shape of the drop is generically
an ellipsoid. To our knowledge, this is the first result reporting
the robustness of the $D^{2}$ law and its ellipsoidal structure near
extinction.

Evaporation of macroscopic drops is well described by the simplifying
hypotheses for the main part of their lifetimes, as discussed in \cite{Cazabat:2010bd}.
At the very end of their lifetimes, we should consider another physics,
e.g. changes of the equilibrium concentration at the interface, thermal
effects, etc. However, self-similar singularities are intermediate-asymptotic
solutions \cite{Barenblatt:1996tm}. Hence, it could conceivably be
hypothesized that, when macroscopic drops evaporate, the ellipsoidal
structure near extinction is a building block in the transitional
period between the macroscopic scale and the much smaller scale. The
$D^{2}$ law for micrometer-sized or nanometer-sized droplets is discussed
in \cite{Holyst:2013gq} and references therein.

Theorem \ref{thm:stable ellipsoidal collapse to a point} has only
examined evaporating drops which are freely suspended. However, in
many situations where an evaporating drop is on a solid substrate,
a characteristic scaling exponent of the radius of the drop is close
to $1/2$ (e.g. \cite{Cachile:2002dl}, \cite{Cachile:2002ft}, \cite{Poulard:2003kx},
\cite{Poulard:2005bd}, \cite{ShahidzadehBonn:2006fb}, \cite{Bonn:2009ha},
\cite{Eggers:2010fh}). It may be the case therefore that the simplifying
hypotheses are major factors but intricate conditions effecting such
deviation happen, for example, at the contact line. The result of
this study, as a basis for more involved analyses, may help us to
understand such a phenomenon. Future work is required to establish
this.

\subsection{Overview of the proof}

The key part of proving Theorem \ref{thm:stable ellipsoidal collapse to a point}
is to treat the ellipsoidal solution $g^{\dce}$ as an operator of
certain parameters (i.e. $g^{\dce}$ is determined completely by the
parameters), and to construct a stable self-similar singularity of
(\ref{eq:ED system primary})-(\ref{eq:ED initial condition primary}),
which approaches the ellipsoidal solution in the rescaled domain.

In Section \ref{sec:Reformulation of the problem} we introduce self-similar
variables (\cite{Giga:1985bm}, \cite{Giga:1987en}) and the Hanzawa
transformation \cite{Hanzawa:1981ka}, and hence the system (\ref{eq:ED system primary})-(\ref{eq:ED initial condition primary})
can be transformed into a fixed domain. Moreover, we decompose the
transformed system into the ellipsoidal part $g^{\dce}$ and the evolution
part $g^{\dct}$. By this decomposition, the rescaled solution $g^{\xi}$
of (\ref{eq:ED system primary})-(\ref{eq:ED initial condition primary})
can be regarded as a perturbation of the ellipsoidal solution.

In Section \ref{sec:Main theorems for the reformulated problem} we
state the main theorems in the reformulated framework. Theorem \ref{thm:existence of ellipsoids}
provides the existence and uniqueness of the ellipsoidal solution
that depends on certain parameters. In fact, the starting point for
the decomposition defined in Section \ref{sec:Reformulation of the problem}
is at Theorem \ref{thm:existence of ellipsoids}. Theorem \ref{thm:the linear evolution problem}
analyzes the auxiliary linear problem for the evolution part of the
decomposition. By Theorem \ref{thm:the linear evolution problem},
we can deduce the parameters that determine the ellipsoidal solution.
Moreover, Theorem \ref{thm:the linear evolution problem} shows that
the ellipsoidal solution is approached exponentially in the logarithmic
variable $\tau=-\ln R(t)$. Finally, we prove Theorem \ref{thm:the nonlinear evolution problem}
for the nonlinear evolution problem of the decomposition. By reversing
the procedure in Section \ref{sec:Reformulation of the problem},
it readily follows that Theorem \ref{thm:the nonlinear evolution problem}
is equivalent to Theorem \ref{thm:stable ellipsoidal collapse to a point}.

In Section \ref{sec:the ODE system} we solve the system of ordinary
differential equations deduced from the reformulated (\ref{eq:ED system primary}),
in terms of the spherical harmonics and the vector spherical harmonics.
By using the results in this section, we can compute the eigenvalues
of certain linearized systems on $\mathbb{S}$ for $g^{\dce}$ and
$g^{\dct}$, respectively.

In Section \ref{sec:Existence of ellipsoids} we prove Theorem \ref{thm:existence of ellipsoids}.
By using the rescaling procedure in Section \ref{sec:Reformulation of the problem},
one can easily check that the evaporative flux $\frac{1}{2}\frac{\partial\phi}{\partial\mathbf{n}}$
overwhelms $\mathbf{u}\cdot\mathbf{n}$ in (\ref{eq:ED kinematic condition primary})
near extinction. Consequently, the eigenvalues deduced from $\frac{1}{2}\frac{\partial\phi}{\partial\mathbf{n}}$
are dominant as $\tau\rightarrow\infty$. However, such eigenvalues
contain a vanishing eigenvalue corresponding to $Y_{2,m}$. This means
that we have to look at the nonlinear behavior to establish the dynamics
in the $Y_{2,m}$-direction, which is far from being trivial. We remark
that the existence of this vanishing eigenvalue is not connected to
a symmetry-breaking bifurcation like those found in free-boundary
problems studied recently in connection to drops, tumor growth, etc.
(cf. \cite{Fontelos:2003wm}, \cite{Friedman:2001ct}, \cite{Friedman:2001fa}
as well as the papers \cite{Fontelos:2004cz}, \cite{Borisovich:2005cp},
\cite{Friedman:2007bw}, \cite{Friedman:2008ei} making use of the
Crandall-Rabinowitz theorem \cite{Crandall:1971wd}). Hence, for this
problem we have to rely on new ideas and techniques. Specifically
we use a property of null quadrature domains, i.e. the boundary of
$\Omega$ is an ellipsoid if and only if the gravity induced by the
uniform mass on the complement of $\Omega$ is equal to zero in $\Omega$
(\cite{Kellogg:1967uz}, \cite{DiBenedetto:1986jt}, \cite{Friedman:1986hn}).

In Section \ref{sec:The linear evolution problem} we prove Theorem
\ref{thm:the linear evolution problem}. By the results in Section
\ref{sec:the ODE system}, we have the system of ordinary differential
equations for $g^{\dct}$. The stability is controlled by the eigenvalues
of the linear operator of the system. However, there are two positive
eigenvalues corresponding to $Y_{0,0}$ and $Y_{1,m}$, respectively.
These positive eigenvalues are related to a small shifting on the
position of the singularity in space and time, but do not indicate
instability, as noted in \cite{Filippas:1992eq}, \cite{Velazquez:1991ue}.
Therefore, we determine the singularity position such that the artifact-instability
does not happen. Moreover, the eigenvalue corresponding to $Y_{2,m}$
vanishes. The motivation of the decomposition in Section \ref{sec:Reformulation of the problem}
and Theorem \ref{thm:existence of ellipsoids} is fundamentally based
on this vanishing eigenvalue. Here, the ellipsoidal solution is determined
by the parameters in $Y_{2,m}$-direction such that $g^{\dct}$ converges
to zero exponentially as $\tau\rightarrow\infty$.

In Appendix \ref{sec:S Harmonics and Vector S Harmonics} we introduce
properties of the spherical harmonics and the vector spherical harmonics,
which are necessary for our proofs.
\begin{notation*}
We will use the following notations throughout the paper: $H^{s}(\Omega)$
denotes the Sobolev space of functions in $\Omega$; the weighted
Sobolev space $H^{s}(\Omega;w)$ equipped with the norm 
\[
\|G\|_{H^{s}(\Omega;w)}^{2}=\sum_{|\alpha|\leq s}\int_{\Omega}|D^{\alpha}G|^{2}wdV,
\]
where 
\[
w(\mathbf{x})=\begin{cases}
1/|\mathbf{x}|^{2} & \textnormal{if}\quad\alpha=0,\\
1 & \textnormal{if}\quad\alpha>0
\end{cases}
\]
holds; $\overline{a}$ is the complex conjugate of $a\in\mathbb{C}$.
\end{notation*}

\section{\label{sec:Reformulation of the problem}Reformulation of the problem}

In this section we transform the free-boundary problem into the fixed
domain by using self-similar variables (\cite{Giga:1985bm}, \cite{Giga:1987en})
and the Hanzawa transformation \cite{Hanzawa:1981ka}. Moreover, we
introduce a decomposition of the solution into the two components
that will be studied separately: the ellipsoidal part and the evolution
part.

As shown in \cite{Friedman:2002io}, it will be convenient to replace
(\ref{eq:ED system primary})-(\ref{eq:ED initial condition primary})
by the following system:

\begin{equation}
\begin{cases}
\Delta p=0 & \textnormal{in}\,\Omega(t),\\
-\nabla p+\Delta\mathbf{u}=0 & \textnormal{in}\,\Omega(t),\\
\operatorname{div}\mathbf{u}=0 & \textnormal{on}\,\Gamma(t),\\
\mathbf{T}[\mathbf{u},p]\mathbf{n=-\sigma\kappa\mathbf{n}} & \textnormal{on}\,\Gamma(t),\\
\int_{\Omega(t)}\mathbf{u}dV=0,\\
\int_{\Omega(t)}(\mathbf{u}\times\mathbf{x})dV=0,\\
\Delta\phi=0 & \textnormal{in}\,\mathbb{R}^{3}\setminus\overline{\Omega}(t),\\
\phi=1 & \textnormal{on}\,\Gamma(t),\\
\phi\rightarrow0 & \textnormal{as}\,|\mathbf{x}|\rightarrow\infty,
\end{cases}\label{eq:ED system primary-1}
\end{equation}
\begin{align}
 & v_{\mathbf{n}}=\mathbf{u}\cdot\mathbf{n}+\frac{1}{2}\frac{\partial\phi}{\partial\mathbf{n}}\quad\textnormal{on}\,\Gamma(t),\label{eq:ED kinematic condition primary-1}\\
 & \Gamma(0)=\Gamma^{0}.\label{eq:ED initial condition primary-1}
\end{align}

\subsection{Reduction to the fixed domain}

In this subsection we will use three different notations of variables
in terms of spherical coordinate systems. The initial shape $\Gamma^{0}$
is given by $\{r^{\prime}=1+\epsilon g^{0}(\theta^{\prime},\varphi^{\prime})\}$,
here $(r',\theta',\varphi')$ is the spherical coordinate system for
the shrinking domain. Applying self-similar variables to the shrinking
domain, we deduce the spherical coordinate system $(r^{\xi},\theta^{\xi},\varphi^{\xi})$
for the rescaled domain which still has the free boundary. Finally,
by using the Hanzawa transformation, we arrive at the spherical coordinate
system $(r,\theta,\varphi)$ for the fixed domain. We give a simple
diagram of these changes of variables:

\begin{center}
\begin{tikzpicture}[->,>=stealth']

\node[state](Shrinking)
{
\begin{tabular}{l}
The shrinking domain:\\
$(r',\theta',\varphi')$
\end{tabular}
};

\node[state, node distance=6cm, right of=Shrinking](Rescaled)
{
\begin{tabular}{l}
The rescaled domain:\\
$(r^\xi, \theta^\xi, \varphi^\xi)$
\end{tabular}
};

\node[state,  below of=Rescaled,  node distance=2cm](Fixed)
{
\begin{tabular}{l}
The fixed domain:\\
$(r,\theta,\varphi)$
\end{tabular}
};

\path 

(Shrinking) edge node[above]
{
(\ref{eq:self-similar variables solution}), (\ref{eq:self-similar variables})
} (Rescaled)  

(Rescaled) edge               node[left]
{
(\ref{eq:hanzawa transformation})
} (Fixed)

; 

\end{tikzpicture}
\end{center}

As a first step, we introduce self-similar variables which reflect
the scaling-property of (\ref{eq:ED system primary-1})-(\ref{eq:ED initial condition primary-1})
as well as the fact that the position of the singularity in space
and time will be perturbed when the perturbed sphere collapses:
\begin{equation}
\begin{cases}
p(\mathbf{x},t)=\left(\sigma+\epsilon P^{\xi}(\xi,\tau)\right)/R(t),\\
\mathbf{u}(\mathbf{x},t)=\epsilon\mathbf{U}^{\xi}(\xi,\tau),\\
\phi(\mathbf{x},t)=R(t)/|\mathbf{x}-\epsilon\mathbf{x}_{0}|+\epsilon\Phi^{\xi}(\xi,\tau),\\
\left(\Gamma(t)-\epsilon\mathbf{x}_{0}\right)/R(t)=\left\{ r^{\xi}=1+\epsilon g^{\xi}(\theta^{\xi},\varphi^{\xi},\tau)\right\} ,
\end{cases}\label{eq:self-similar variables solution}
\end{equation}
with 
\begin{equation}
\tau=-\ln R(t)\quad\textnormal{and}\quad\xi=(\mathbf{x}-\epsilon\mathbf{x}_{0})/R(t),\label{eq:self-similar variables}
\end{equation}
where 
\[
R(t)=[1-t/(1+\epsilon t_{0}Y_{0,0})]^{1/2},
\]
the position of the singularity in space and time is given by $(\epsilon\mathbf{x}_{0},1+\epsilon t_{0}Y_{0,0})$,
both $\mathbf{x}_{0}$ and $t_{0}$ will be determined as a part of
the solution, and $(r^{\xi},\theta^{\xi},\varphi^{\xi})$ is the spherical
coordinate system for the three-dimensional $\xi$-space.

Then $(P^{\xi},\mathbf{U}^{\xi},\Phi^{\xi},g^{\xi})$ satisfies 
\begin{equation}
\begin{cases}
\Delta P^{\xi}=0 & \textnormal{in}\,\Omega^{\xi}(\tau),\\
-\nabla P^{\xi}+\Delta\mathbf{U}^{\xi}=0 & \textnormal{in}\,\Omega^{\xi}(\tau),\\
\operatorname{div}\mathbf{\mathbf{U}}^{\xi}=0 & \textnormal{on}\,\Gamma^{\xi}(\tau),\\
\mathbf{T}[\mathbf{U}^{\xi},P^{\xi}]\mathbf{n}^{\xi}=-\sigma(\kappa^{\xi}-1)\epsilon^{-1}\mathbf{n}^{\xi} & \textnormal{on}\,\Gamma^{\xi}(\tau),\\
\int_{\Omega^{\xi}(\tau)}\mathbf{U}^{\xi}dV=0,\\
\int_{\Omega^{\xi}(\tau)}(\mathbf{U}^{\xi}\times\xi)dV=0,\\
\Delta\Phi^{\xi}=0 & \textnormal{in}\,\mathbb{R}^{3}\setminus\overline{\Omega^{\xi}}(\tau),\\
\Phi^{\xi}=g^{\xi}/(1+\epsilon g^{\xi}) & \textnormal{on}\,\Gamma^{\xi}(\tau),\\
\Phi^{\xi}\rightarrow0 & \textnormal{as}\,|\xi|\rightarrow\infty,
\end{cases}\label{eq:ED rescaled system xi}
\end{equation}
where $\Gamma^{\xi}(\tau)=\left\{ r^{\xi}=1+\epsilon g^{\xi}(\theta^{\xi},\varphi^{\xi},\tau)\right\} $
holds, $\kappa^{\xi}$ is the mean curvature of $\Gamma^{\xi}(\tau)$,
and $\mathbf{n}^{\xi}$ is the outward normal of the free boundary
$\Gamma^{\xi}(\tau)$. The kinematic condition (\ref{eq:ED kinematic condition primary-1})
and the initial condition (\ref{eq:ED initial condition primary-1})
will be treated at the end of this subsection.

We now transform the free-boundary system (\ref{eq:ED rescaled system xi})
into the fixed domain by virtue of the Hanzawa transformation. The
transformation is 
\begin{equation}
r^{\xi}=r+\Psi(1-r)\epsilon g^{\xi}(\theta^{\xi},\varphi^{\xi},\tau),\quad(\theta^{\xi},\varphi^{\xi})=(\theta,\varphi),\label{eq:hanzawa transformation}
\end{equation}
with a cut-off function $\Psi(z)\in C_{0}^{\infty}$, 
\[
\Psi(z)=\begin{cases}
0,\quad\textnormal{if}\,|z|\geq3\delta_{0}/4\\
 & ,\quad\quad\left\vert \frac{d^{k}\Psi}{dz^{k}}\right\vert \leq\frac{C}{\delta_{0}^{k}},\\
1,\quad\textnormal{if}\,|z|<\delta_{0}/4
\end{cases}
\]
where $\delta_{0}$ is positive and small. It maps the domain bounded
by $\Gamma^{\xi}(\tau)$ into the sphere $\mathbb{B}:=\{r<1\}$. In
terms of the new spherical coordinate system $(r,\theta,\varphi)$,
by setting $u^{\xi}(r^{\xi},\theta^{\xi},\varphi^{\xi},\tau)=u(r,\theta,\varphi,\tau)$
we are led to 
\[
\frac{\partial u^{\xi}}{\partial r^{\xi}}=\frac{\partial r}{\partial r^{\xi}}\frac{\partial u}{\partial r},\quad\quad\frac{\partial^{2}u^{\xi}}{\partial(r^{\xi})^{2}}=\left(\frac{\partial r}{\partial r^{\xi}}\right)^{2}\frac{\partial^{2}u}{\partial r^{2}}+\frac{\partial^{2}r}{\partial(r^{\xi})^{2}}\frac{\partial u}{\partial r},
\]
\[
\frac{\partial u^{\xi}}{\partial\theta^{\xi}}=\frac{\partial u}{\partial\theta}+\frac{\partial r}{\partial\theta^{\xi}}\frac{\partial u}{\partial r},\quad\quad\frac{\partial^{2}u^{\xi}}{\partial(\theta^{\xi})^{2}}=\frac{\partial^{2}u}{\partial\theta^{2}}+2\frac{\partial r}{\partial\theta^{\xi}}\frac{\partial^{2}u}{\partial r\partial\theta}+\frac{\partial^{2}r}{\partial(\theta^{\xi})^{2}}\frac{\partial u}{\partial r}+\left(\frac{\partial r}{\partial\theta^{\xi}}\right)^{2}\frac{\partial^{2}u}{\partial r^{2}},
\]
with 
\[
\frac{\partial r}{\partial r^{\xi}}=\frac{1}{1-\Psi_{z}\epsilon g},\quad\quad\frac{\partial^{2}r}{\partial(r^{\xi})^{2}}=\frac{-\Psi_{zz}\epsilon g}{(1-\Psi_{z}\epsilon g)^{3}},
\]
\[
\frac{\partial r}{\partial\theta^{\xi}}=\frac{-\Psi\epsilon g_{\theta}}{1-\Psi_{z}\epsilon g},\quad\quad\frac{\partial^{2}r}{\partial(\theta^{\xi})^{2}}=\frac{-\Psi\epsilon g_{\theta\theta}}{1-\Psi_{z}\epsilon g}-\frac{2\Psi\Psi_{z}(\epsilon g_{\theta})^{2}}{(1-\Psi_{z}\epsilon g)^{2}}-\frac{\Psi^{2}\Psi_{zz}\epsilon g(\epsilon g_{\theta})^{2}}{(1-\Psi_{z}\epsilon g)^{3}},
\]
where $\Psi_{z}=\partial\Psi/\partial z$, $\Psi_{zz}=\partial^{2}\Psi/\partial z^{2}$,
$g^{\xi}(\theta^{\xi},\varphi^{\xi},\tau)=g(\theta,\varphi,\tau)$,
and we can deduce the formulas for $\varphi$ in a similar fashion.
Let $\operatorname{ext}\mathbb{B}$ denote the set $\{r>1\}$. By
applying the Hanzawa transformation (\ref{eq:hanzawa transformation})
to (\ref{eq:ED rescaled system xi}), we finally conclude that (\ref{eq:ED rescaled system xi})
is equivalent to the following system: 
\begin{equation}
\begin{cases}
\Delta P=f^{1}[P,g] & \textnormal{in}\,\mathbb{B},\quad\tau>0,\\
-\nabla P+\Delta\mathbf{U}=\mathbf{f}^{2}[\mathbf{U},P,g] & \textnormal{in}\,\mathbb{B},\quad\tau>0,\\
\operatorname{div}\mathbf{U}=f^{3}[\mathbf{U},g] & \textnormal{on}\,\mathbb{S},\quad\tau>0,\\
{}[-P\mathbf{I}+\nabla\mathbf{U}+(\nabla\mathbf{U})^{T}]\mathbf{e}_{r}-\sigma(g+\frac{1}{2}\Delta_{\omega}g)\mathbf{e}_{r}=\mathbf{f}^{4}[\mathbf{U},P,g] & \textnormal{on}\,\mathbb{S},\quad\tau>0,\\
\int_{\mathbb{B}}\mathbf{U}dV=\mathbf{f}^{5}[\mathbf{U},g], & \tau>0,\\
\int_{\mathbb{B}}\left(\mathbf{U}\times(r\sin\theta\cos\varphi,r\sin\theta\sin\varphi,r\cos\theta)\right)dV=\mathbf{f}^{6}[\mathbf{U},g], & \tau>0,\\
\Delta\Phi=f^{7}[\Phi,g] & \textnormal{in}\,\operatorname{ext}\mathbb{B},\quad\tau>0,\\
\Phi-g=f^{8}[g] & \textnormal{on}\,\mathbb{S},\quad\tau>0,\\
\Phi\rightarrow0 & \textnormal{as}\, r\rightarrow\infty,
\end{cases}\label{eq:ED system fixed domain}
\end{equation}
where 
\begin{align*}
f^{1}\left[P,g\right] & =\left[1-\frac{1}{\left(1-\Psi_{z}\epsilon g\right)^{2}}\right]\frac{\partial^{2}P}{\partial r^{2}}-\frac{-\Psi_{zz}\epsilon g}{\left(1-\Psi_{z}\epsilon g\right)^{3}}\frac{\partial P}{\partial r}+\frac{2}{r}\left(1-\frac{r}{r+\Psi\epsilon g}\frac{1}{1-\Psi_{z}\epsilon g}\right)\frac{\partial P}{\partial r}\\
 & +\frac{1}{r^{2}}\left[1-\frac{r^{2}}{\left(r+\Psi\epsilon g\right)^{2}}\right]\Delta_{\omega}P-\frac{1}{\left(r+\Psi\epsilon g\right)^{2}}\left(\frac{\cos\theta}{\sin\theta}\frac{-\Psi\epsilon g_{\theta}}{1-\Psi_{z}\epsilon g}\frac{\partial P}{\partial r}+A^{\theta}[P,g]+\frac{1}{\sin^{2}\theta}A^{\varphi}[P,g]\right),
\end{align*}

\begin{equation}
\Delta_{\omega}=\frac{1}{\sin\theta}\frac{\partial}{\partial\theta}\left(\sin\theta\frac{\partial}{\partial\theta}\right)+\frac{1}{\sin^{2}\theta}\frac{\partial^{2}}{\partial\varphi^{2}},\label{eq:laplacian on sphere}
\end{equation}
\begin{align}
A^{\theta}[P,g] & =\frac{-\Psi\epsilon g_{\theta}}{1-\Psi_{z}\epsilon g}2\frac{\partial^{2}P}{\partial r\partial\theta}+\left[\frac{-\Psi\epsilon g_{\theta\theta}}{1-\Psi_{z}\epsilon g}+\frac{-2\Psi\Psi_{z}\left(\epsilon g_{\theta}\right)^{2}}{\left(1-\Psi_{z}\epsilon g\right)^{2}}+\frac{-\Psi^{2}\Psi_{zz}\epsilon g\left(\epsilon g_{\theta}\right)^{2}}{\left(1-\Psi_{z}\epsilon g\right)^{3}}\right]\frac{\partial P}{\partial r}\label{s23}\\
 & +\left(\frac{-\Psi\epsilon g_{\theta}}{1-\Psi_{z}\epsilon g}\right)^{2}\frac{\partial^{2}P}{\partial r^{2}},\nonumber 
\end{align}
and we have $A^{\varphi}[P,g]$ from replacing $\theta$ by $\varphi$
in (\ref{s23}). In addition, we deduce

\begin{align*}
\mathbf{f}^{2}[\mathbf{U},P,g] & =f^{1}[\mathbf{U},g]+\left(1-\frac{1}{1-\Psi_{z}\epsilon g}\right)\frac{\partial P}{\partial r}\mathbf{e}_{r}\\
 & +\left[\frac{1}{r}\left(1-\frac{r}{r+\Psi\epsilon g}\right)\frac{\partial P}{\partial\theta}-\frac{1}{r+\Psi\epsilon g}\frac{-\Psi\epsilon g_{\theta}}{1-\Psi_{z}\epsilon g}\frac{\partial P}{\partial r}\right]\mathbf{e}_{\theta}\\
 & +\frac{1}{\sin\theta}\left[\frac{1}{r}\left(1-\frac{r}{r+\Psi\epsilon g}\right)\frac{\partial P}{\partial\varphi}-\frac{1}{r+\Psi\epsilon g}\frac{-\Psi\epsilon g_{\varphi}}{1-\Psi_{z}\epsilon g}\frac{\partial P}{\partial r}\right]\mathbf{e}_{\varphi},
\end{align*}

\begin{align*}
f^{3}[\mathbf{U},g] & =2\left(1-\frac{1}{1+\epsilon g}\right)(\mathbf{U}\cdot\mathbf{e}_{r})+\left(1-\frac{1}{1-\Psi_{z}\epsilon g}\right)\frac{\partial\left(\mathbf{U}\cdot\mathbf{e}_{r}\right)}{\partial r}\\
 & +\left(1-\frac{1}{1+\epsilon g}\right)\left[\frac{\cos\theta}{\sin\theta}\left(\mathbf{U}\cdot\mathbf{e}_{\theta}\right)+\frac{\partial\left(\mathbf{U}\cdot\mathbf{e}_{\theta}\right)}{\partial\theta}\right]-\frac{1}{1+\epsilon g}\frac{-\Psi\epsilon g_{\theta}}{1-\Psi_{z}\epsilon g}\frac{\partial\left(\mathbf{U}\cdot\mathbf{e}_{\theta}\right)}{\partial r}\\
 & +\left(1-\frac{1}{1+\epsilon g}\right)\frac{1}{\sin\theta}\frac{\partial\left(\mathbf{U}\cdot\mathbf{e}_{\varphi}\right)}{\partial\varphi}-\frac{1}{1+\epsilon g}\frac{1}{\sin\theta}\frac{-\Psi\epsilon g_{\varphi}}{1-\Psi_{z}\epsilon g}\frac{\partial\left(\mathbf{U}\cdot\mathbf{e}_{\varphi}\right)}{\partial r},
\end{align*}
\[
\mathbf{f}^{4}[\mathbf{U},P,g]=\left(-P\mathbf{I}+\mathbf{B}_{1}+\mathbf{B}_{1}^{T}\right)\left(\mathbf{e}_{r}-\mathbf{n}^{\xi}\right)+\left(\mathbf{B}_{2}+\mathbf{B}_{2}^{T}\right)\mathbf{e}_{r}+\sigma(1-\kappa^{\xi})\epsilon^{-1}\left(\mathbf{\mathbf{n}}^{\xi}-\mathbf{e}_{r}\right)-\sigma f^{4;\kappa}[g]\mathbf{e}_{r},
\]
where

\[
\mathbf{B}_{1}=\frac{1}{1-\Psi_{z}\epsilon g}\frac{\partial\mathbf{U}}{\partial r}\mathbf{e}_{r}+\frac{1}{1+\epsilon g}\left(\frac{\partial\mathbf{U}}{\partial\theta}\mathbf{e}_{\theta}+\frac{1}{\sin\theta}\frac{\partial\mathbf{U}}{\partial\varphi}\mathbf{e}_{\varphi}\right)+\frac{1}{1+\epsilon g}\frac{\partial\mathbf{U}}{\partial r}\left(\frac{-\Psi\epsilon g_{\theta}}{1-\Psi_{z}\epsilon g}\mathbf{e}_{\theta}+\frac{1}{\sin\theta}\frac{-\Psi\epsilon g_{\epsilon}}{1-\Psi_{z}\epsilon g}\mathbf{e}_{\varphi}\right),
\]

\begin{align*}
\mathbf{B}_{2} & =\left(1-\frac{1}{1-\Psi_{z}\epsilon g}\right)\frac{\partial\mathbf{U}}{\partial r}\mathbf{e}_{r}\\
 & +\left(1-\frac{1}{1+\epsilon g}\right)\left(\frac{\partial\mathbf{U}}{\partial\theta}\mathbf{e}_{\theta}+\frac{1}{\sin\theta}\frac{\partial\mathbf{U}}{\partial\varphi}\mathbf{e}_{\varphi}\right)-\frac{1}{1+\epsilon g}\frac{\partial\mathbf{U}}{\partial r}\left(\frac{-\Psi\epsilon g_{\theta}}{1-\Psi_{z}\epsilon g}\mathbf{e}_{\theta}+\frac{1}{\sin\theta}\frac{-\Psi\epsilon g_{\epsilon}}{1-\Psi_{z}\epsilon g}\mathbf{e}_{\varphi}\right),
\end{align*}
and from Theorem 8.1 in \cite{Friedman:2001vk} we have

\[
\kappa^{\xi}=1-\epsilon(g+\frac{1}{2}\Delta_{\omega}g+f^{4;\kappa}[g]),\quad\quad f^{4;\kappa}[g]=O(\epsilon).
\]
We also arrive at
\[
\mathbf{f}^{5}[\mathbf{U},g]=\int_{\mathbb{B}}\mathbf{U}\left[1-\left(1+\Psi\epsilon g/r\right)^{2}\left(1-\Psi_{z}\epsilon g\right)\right]dV,
\]

\[
\mathbf{f}^{6}[\mathbf{U},g]=\int_{\mathbb{B}}\mathbf{U}\times\left(r\sin\theta\cos\varphi,r\sin\theta\sin\varphi,r\cos\theta\right)\left[1-\left(1+\Psi\epsilon g/r\right)^{3}\left(1-\Psi_{z}\epsilon g\right)\right]dV,
\]

\[
f^{7}[\Phi,g]=f^{1}[\Phi,g],
\]

\[
f^{8}[g]=\frac{-\epsilon g^{2}}{1+\epsilon g}.
\]
We emphasize that the order of the right-hand sides of (\ref{eq:ED system fixed domain})
is $O(\epsilon)$, which will be used in our proofs.

We now consider the kinematic condition (\ref{eq:ED kinematic condition primary-1}).
Using the self-similar variables (\ref{eq:self-similar variables solution})
and (\ref{eq:self-similar variables}), we deduce the following equation
in terms of the spherical coordinate system $(r^{\xi},\theta^{\xi},\varphi^{\xi})$:
\begin{equation}
v_{\mathbf{n}}=\frac{d}{dt}\left[R(t)(1+\epsilon g^{\xi})\mathbf{e}_{(r^{\xi})}\right]\cdot\mathbf{n}^{\xi}=\frac{1}{2(1+\epsilon t_{0}Y_{0,0})e^{-\tau}}\left(-1+\frac{\partial\epsilon g^{\xi}}{\partial\tau}-\epsilon g^{\xi}\right)\mathbf{e}_{(r^{\xi})}\cdot\mathbf{n}^{\xi}\quad\textnormal{on}\,\Gamma^{\xi}(\tau).\label{s211}
\end{equation}
In addition, one can easily see that 
\begin{equation}
\mathbf{n}^{\xi}=\frac{1+\epsilon g^{\xi}}{\varrho^{\xi}}\mathbf{e}_{(r^{\xi})}+\frac{-\epsilon g_{(\theta^{\xi})}^{\xi}}{\varrho^{\xi}}\mathbf{e}_{(\theta^{\xi})}+\frac{-\epsilon g_{(\varphi^{\xi})}^{\xi}}{\varrho^{\xi}\sin\theta^{\xi}}\mathbf{e}_{(\varphi^{\xi})}\quad\textnormal{with}\quad\varrho^{\xi}=\left[(1+\epsilon g^{\xi})^{2}+(\epsilon g_{(\theta^{\xi})}^{\xi})^{2}+(\epsilon g_{(\varphi^{\xi})}^{\xi}/\sin\theta^{\xi})^{2}\right]^{1/2}\label{s212}
\end{equation}
holds. Substituting (\ref{s212}) into (\ref{s211}) we can rewrite
(\ref{eq:ED kinematic condition primary-1}) in the form 
\begin{align}
\frac{\partial g^{\xi}}{\partial\tau}-g^{\xi} & =2(1+\epsilon t_{0}Y_{0,0})e^{-\tau}\left(\mathbf{U}^{\xi}\cdot\mathbf{e}_{(r^{\xi})}-\mathbf{U}^{\xi}\cdot\mathbf{e}_{(\theta^{\xi})}\frac{\epsilon g_{(\theta^{\xi})}^{\xi}}{1+\epsilon g^{\xi}}-\mathbf{U}^{\xi}\cdot\mathbf{e}_{(\varphi^{\xi})}\frac{\epsilon g_{(\varphi^{\xi})}^{\xi}}{1+\epsilon g^{\xi}}\frac{1}{\sin\theta^{\xi}}\right)\label{s213}\\
 & +\frac{1}{\epsilon}\left[1-\frac{1+\epsilon t_{0}Y_{0,0}}{\left(1+\epsilon g^{\xi}\right)^{2}}\right]\nonumber \\
 & +\left(1+\epsilon t_{0}Y_{0,0}\right)\left(\frac{\partial\Phi^{\xi}}{\partial r^{\xi}}-\frac{1}{1+\epsilon g^{\xi}}\frac{\partial\Phi^{\xi}}{\partial\theta^{\xi}}\frac{\epsilon g_{(\theta^{\xi})}^{\xi}}{1+\epsilon g^{\xi}}-\frac{1}{1+\epsilon g^{\xi}}\frac{\partial\Phi^{\xi}}{\partial\varphi^{\xi}}\frac{\epsilon g_{(\varphi^{\xi})}^{\xi}}{1+\epsilon g^{\xi}}\frac{1}{\sin^{2}\theta^{\xi}}\right)\quad\textnormal{on}\,\Gamma^{\xi}(\tau).\nonumber 
\end{align}
Therefore, by means of the Hanzawa transformation (\ref{eq:hanzawa transformation}),
we arrive from (\ref{s213}) at the following equation for the kinematic
condition (\ref{eq:ED kinematic condition primary-1}): 
\begin{equation}
\frac{\partial g}{\partial\tau}-3g+t_{0}Y_{0,0}-2e^{-\tau}(\mathbf{U}\cdot\mathbf{e}_{r})-\frac{\partial\Phi}{\partial r}=e^{-\tau}f^{9}[\mathbf{U},g,t_{0}]+f^{10}[\Phi,g,t_{0}]\quad\textnormal{on}\,\mathbb{S},\quad\tau>0,\label{eq:ED kinematic condition fixed domain}
\end{equation}
where 
\[
f^{9}[\mathbf{U},g,t_{0}]=2(1+\epsilon t_{0}Y_{0,0})\left(-\mathbf{U}\cdot\mathbf{e}_{\theta}\frac{\epsilon g_{\theta}}{1+\epsilon g}-\mathbf{U}\cdot\mathbf{e}_{\varphi}\frac{\epsilon g_{\varphi}}{1+\epsilon g}\frac{1}{\sin\theta}\right)+2\epsilon t_{0}Y_{0,0}\mathbf{U}\cdot\mathbf{e}_{r},
\]
\begin{align*}
f^{10}[\Phi,g,t_{0}] & =(1+\epsilon t_{0}Y_{0,0})\left[-\frac{3\epsilon g^{2}+2\epsilon^{2}g^{3}}{\left(1+\epsilon g\right)^{2}}-\frac{1}{1+\epsilon g}\left(\frac{\partial\Phi}{\partial\theta}+\frac{-\Psi\epsilon g_{\theta}}{1-\Psi_{z}\epsilon g}\frac{\partial\Phi}{\partial r}\right)\frac{\epsilon g_{\theta}}{1+\epsilon g}\right.\\
 & \left.-\frac{1}{1+\epsilon g}\left(\frac{\partial\Phi}{\partial\varphi}+\frac{-\Psi\epsilon g_{\varphi}}{1-\Psi_{z}\epsilon g}\frac{\partial\Phi}{\partial r}\right)\frac{\epsilon g_{\varphi}}{1+\epsilon g}\frac{1}{\sin^{2}\theta}\right]+\epsilon t_{0}Y_{0,0}\frac{\partial\Phi}{\partial r}+2\epsilon t_{0}Y_{0,0}g.
\end{align*}
The order of $f^{9}[\mathbf{U},g,t_{0}]$ and $f^{10}[\Phi,g,t_{0}]$
is $O(\epsilon)$.

We finally consider the initial condition (\ref{eq:ED initial condition primary-1}).
Assume that the initial shape
\[
\Gamma^{0}=\left(1+\epsilon g^{0}(\theta^{\prime},\varphi^{\prime})\right)\mathbf{e}_{(r^{\prime})}
\]
is a perturbation of the self-similar solution (\ref{eq:ED self-similar solution}),
where $(r^{\prime},\theta^{\prime},\varphi^{\prime})$ is the spherical
coordinate system for the three-dimensional $\mathbf{x}$-space and
$g^{0}(\theta^{\prime},\varphi^{\prime})$ is given. Then, by the
self-similar variables (\ref{eq:self-similar variables solution})
and (\ref{eq:self-similar variables}), we are led to the following
relation:
\begin{equation}
\left(1+\epsilon g^{0}(\theta^{\prime},\varphi^{\prime})\right)\mathbf{e}_{(r^{\prime})}=\left(1+\epsilon g^{\xi}(\theta^{\xi},\varphi^{\xi},0)\right)\mathbf{e}_{(r^{\xi})}+\epsilon\mathbf{x}_{0}.\label{in1}
\end{equation}
Writing $\epsilon\mathbf{x}_{0}=\epsilon(x_{01},x_{02},x_{03})$,
we deduce from (\ref{in1}) that
\begin{equation}
\begin{cases}
\left(1+\epsilon g^{\xi}(\theta^{\xi},\varphi^{\xi},0)\right)\sin\theta^{\xi}\cos\varphi^{\xi}=\left(1+\epsilon g^{0}(\theta^{\prime},\varphi^{\prime})\right)\sin\theta^{\prime}\cos\varphi^{\prime}-\epsilon x_{01},\\
\left(1+\epsilon g^{\xi}(\theta^{\xi},\varphi^{\xi},0)\right)\sin\theta^{\xi}\sin\varphi^{\xi}=\left(1+\epsilon g^{0}(\theta^{\prime},\varphi^{\prime})\right)\sin\theta^{\prime}\sin\varphi^{\prime}-\epsilon x_{02},\\
\left(1+\epsilon g^{\xi}(\theta^{\xi},\varphi^{\xi},0)\right)\cos\theta^{\xi}=\left(1+\epsilon g^{0}(\theta^{\prime},\varphi^{\prime})\right)\cos\theta^{\prime}-\epsilon x_{03},
\end{cases}\label{in2}
\end{equation}
so that one can write 
\[
\left(1+\epsilon g^{\xi}(\theta^{\xi},\varphi^{\xi},0)\right)^{2}=f^{angle}(\theta^{\prime},\varphi^{\prime})
\]
with 
\[
f^{angle}(\theta^{\prime},\varphi^{\prime})=\left(1+\epsilon g^{0}(\theta^{\prime},\varphi^{\prime})\right)^{2}+\epsilon^{2}|\mathbf{x}_{0}|^{2}-2\epsilon\left(1+\epsilon g^{0}(\theta^{\prime},\varphi^{\prime})\right)\left(x_{01}\sin\theta^{\prime}\cos\varphi^{\prime}+x_{02}\sin\theta^{\prime}\sin\varphi^{\prime}+x_{03}\cos\theta^{\prime}\right).
\]
Accordingly, it follows from (\ref{in2}) that $\theta^{\xi}$ and
$\varphi^{\xi}$ are functions of $(\theta^{\prime},\varphi^{\prime})$:
\begin{align*}
\theta^{\xi} & =\cos^{-1}\left[\frac{\left(1+\epsilon g^{0}(\theta^{\prime},\varphi^{\prime})\right)\cos\theta^{\prime}-\epsilon x_{03}}{\sqrt{f^{angle}(\theta^{\prime},\varphi^{\prime})}}\right]=\theta^{\prime}+O(\epsilon),\\
\varphi^{\xi} & =\cos^{-1}\left[\frac{\left(1+\epsilon g^{0}(\theta^{\prime},\varphi^{\prime})\right)\sin\theta^{\prime}\cos\varphi^{\prime}-\epsilon x_{01}}{\sin\theta^{\xi}\sqrt{f^{angle}(\theta^{\prime},\varphi^{\prime})}}\right]=\varphi^{\prime}+O(\epsilon).
\end{align*}
As a result, since there is no change of angles in the Hanzawa transformation
(\ref{eq:hanzawa transformation}) and $g^{\xi}(\theta^{\xi},\varphi^{\xi},0)=g(\theta,\varphi,0)$
holds, we rewrite $g(\theta,\varphi,0)$ in the form 
\[
g(\theta,\varphi,0)-g^{0}(\theta,\varphi)+\left(x_{01}\sin\theta\cos\varphi+x_{02}\sin\theta\sin\varphi+x_{03}\cos\theta\right)=f^{11}[\mathbf{x}_{0}],
\]
where the order of $f^{11}[\mathbf{x}_{0}]$ is $O(\epsilon)$. Moreover,
by using 
\[
Y_{1,-1}=\sqrt{\frac{3}{8\pi}}\sin\theta e^{-i\varphi},\quad Y_{1,0}=\sqrt{\frac{3}{4\pi}}\cos\theta,\quad Y_{1,1}=-\sqrt{\frac{3}{8\pi}}\sin\theta e^{i\varphi},
\]
the initial condition (\ref{eq:ED initial condition primary-1}) is
reduced to 
\begin{equation}
g(\theta,\varphi,0)-g^{0}(\theta,\varphi)+\sum_{|m|\leq1}b_{1,m}Y_{1,m}=f^{11}[\mathbf{x}_{0}],\label{eq:ED initial condition fixed domain}
\end{equation}
where 
\begin{equation}
\left(\begin{array}{c}
x_{01}\\
x_{02}\\
x_{03}
\end{array}\right)=\sqrt{\frac{3}{8\pi}}\left(\begin{array}{ccc}
1 & 0 & -1\\
-i & 0 & -i\\
0 & \sqrt{2} & 0
\end{array}\right)\left(\begin{array}{c}
b_{1,-1}\\
b_{1,0}\\
b_{1,1}
\end{array}\right).\label{eq:x0 b1 relation}
\end{equation}

\subsection{The decomposition of the solution}

We decompose the fixed-domain problem (\ref{eq:ED system fixed domain}),
(\ref{eq:ED kinematic condition fixed domain}), (\ref{eq:ED initial condition fixed domain})
into the ellipsoidal part and the evolution part. We will use the
super-indexes $\mathfrak{E}$ and $\mathfrak{T}$ to denote the unknowns
for such ellipsoidal and evolution problems, respectively. Accordingly,
the decomposition is 
\begin{equation}
\left(P,\mathbf{U},\Phi,g\right)=(P^{\mathfrak{E}},\mathbf{U}^{\mathfrak{E}},\Phi^{\mathfrak{E}},g^{\mathfrak{E}})+(P^{\mathfrak{T}},\mathbf{U}^{\mathfrak{T}},\Phi^{\mathfrak{T}},g^{\mathfrak{T}}).\label{eq:decomposition}
\end{equation}
We define (\ref{eq:decomposition}) more precisely. The ellipsoidal
part satisfies the following overdetermined problem: 
\begin{equation}
\begin{cases}
\Delta P^{\mathfrak{E}}=f^{1}[P^{\mathfrak{E}},g^{\mathfrak{E}}] & \textnormal{in}\,\mathbb{B},\\
-\nabla P^{\mathfrak{E}}+\Delta\mathbf{U}^{\mathfrak{E}}=\mathbf{f}^{2}[\mathbf{U}^{\mathfrak{E}},P^{\mathfrak{E}},g^{\mathfrak{E}}] & \textnormal{in}\,\mathbb{B},\\
\operatorname{div}\mathbf{U}^{\mathfrak{E}}=f^{3}[\mathbf{U}^{\mathfrak{E}},g^{\mathfrak{E}}] & \textnormal{on}\,\mathbb{S},\\
{}[-P^{\mathfrak{E}}\mathbf{I}+\nabla\mathbf{U}^{\mathfrak{E}}+(\nabla\mathbf{U}^{\mathfrak{E}})^{T}]\mathbf{e}_{r}-\sigma(g^{\mathfrak{E}}+\frac{1}{2}\Delta_{\omega}g^{\mathfrak{E}})\mathbf{e}_{r}=\mathbf{f}^{4}[\mathbf{U}^{\mathfrak{E}},P^{\mathfrak{E}},g^{\mathfrak{E}}] & \textnormal{on}\,\mathbb{S},\\
\int_{\mathbb{B}}\mathbf{U}^{\mathfrak{E}}dV=\mathbf{f}^{5}[\mathbf{U}^{\mathfrak{E}},g^{\mathfrak{E}}],\\
\int_{\mathbb{B}}\left(\mathbf{U}^{\mathfrak{E}}\times(r\sin\theta\cos\varphi,r\sin\theta\sin\varphi,r\cos\theta)\right)dV=\mathbf{f}^{6}[\mathbf{U}^{\mathfrak{E}},g^{\mathfrak{E}}],
\end{cases}\label{eq:dce stokes system}
\end{equation}
\begin{equation}
\begin{cases}
\Delta\Phi^{\mathfrak{E}}=f^{7}[\Phi^{\mathfrak{E}},g^{\mathfrak{E}}] & \textnormal{in}\,\operatorname{ext}\mathbb{B},\\
\Phi^{\mathfrak{E}}-g^{\mathfrak{E}}=f^{8}[g^{\mathfrak{E}}] & \textnormal{on}\,\mathbb{S},\\
\Phi^{\mathfrak{E}}\rightarrow0 & \textnormal{as}\, r\rightarrow\infty,
\end{cases}\label{eq:dce vapor}
\end{equation}
\texttt{ 
\begin{equation}
-3g^{\dce}+t_{0}Y_{0,0}-\frac{\partial\Phi^{\dce}}{\partial r}=f^{10}[\Phi^{\dce},g^{\dce},t_{0}]\quad\textnormal{on}\,\mathbb{S},\label{eq:dce boundary condition}
\end{equation}
}where $t_{0}$ is a part of the solution to (\ref{eq:dce stokes system})-(\ref{eq:dce boundary condition}).
On the other hand, the evolution part satisfies the following evolution
problem: 
\begin{equation}
\begin{cases}
\Delta P^{\mathfrak{T}}=f^{1}[P,g]-f^{1}[P^{\mathfrak{E}},g^{\mathfrak{E}}] & \textnormal{in}\,\mathbb{B},\quad\tau>0,\\
-\nabla P^{\mathfrak{T}}+\Delta\mathbf{U}^{\mathfrak{T}}=\mathbf{f}^{2}[\mathbf{U},P,g]-\mathbf{f}^{2}[\mathbf{U}^{\mathfrak{E}},P^{\mathfrak{E}},g^{\mathfrak{E}}] & \textnormal{in}\,\mathbb{B},\quad\tau>0,\\
\operatorname{div}\mathbf{U}^{\mathfrak{T}}=f^{3}[\mathbf{U},g]-f^{3}[\mathbf{U}^{\mathfrak{E}},g^{\mathfrak{E}}] & \textnormal{on}\,\mathbb{S},\quad\tau>0,\\
{}[-P^{\mathfrak{T}}\mathbf{I}+\nabla\mathbf{U}^{\mathfrak{T}}+(\nabla\mathbf{U}^{\mathfrak{T}})^{T}]\mathbf{e}_{r}-\sigma(g^{\mathfrak{T}}+\frac{1}{2}\Delta_{\omega}g^{\mathfrak{T}})\mathbf{e}_{r}=\mathbf{f}^{4}[\mathbf{U},P,g]-\mathbf{f}^{4}[\mathbf{U}^{\mathfrak{E}},P^{\mathfrak{E}},g^{\mathfrak{E}}] & \textnormal{on}\,\mathbb{S},\quad\tau>0,\\
\int_{\mathbb{B}}\mathbf{U}^{\mathfrak{T}}dV=\mathbf{f}^{5}[\mathbf{U},g]-\mathbf{f}^{5}[\mathbf{U}^{\mathfrak{E}},g^{\mathfrak{E}}], & \tau>0,\\
\int_{\mathbb{B}}\left(\mathbf{U}^{\mathfrak{T}}\times(r\sin\theta\cos\varphi,r\sin\theta\sin\varphi,r\cos\theta)\right)dV=\mathbf{f}^{6}[\mathbf{U},g]-\mathbf{f}^{6}[\mathbf{U}^{\mathfrak{E}},g^{\mathfrak{E}}], & \tau>0,\\
\Delta\Phi^{\mathfrak{T}}=f^{7}[\Phi,g]-f^{7}[\Phi^{\mathfrak{E}},g^{\mathfrak{E}}] & \textnormal{in}\,\operatorname{ext}\mathbb{B},\quad\tau>0,\\
\Phi^{\mathfrak{T}}-g^{\mathfrak{T}}=f^{8}[g]-f^{8}[g^{\mathfrak{E}}] & \textnormal{on}\,\mathbb{S},\quad\tau>0,\\
\Phi^{\mathfrak{T}}\rightarrow0 & \textnormal{as}\, r\rightarrow\infty,
\end{cases}\label{eq:dct system}
\end{equation}
\texttt{ 
\begin{equation}
\frac{\partial g^{\dct}}{\partial\tau}-3g^{\mathfrak{T}}-2e^{-\tau}(\mathbf{U}\cdot\mathbf{e}_{r})-\frac{\partial\Phi^{\mathfrak{T}}}{\partial r}=e^{-\tau}f^{9}[\mathbf{U},g,t_{0}]+f^{10}[\Phi,g,t_{0}]-f^{10}[\Phi^{\mathfrak{E}},g^{\mathfrak{E}},t_{0}]\quad\textnormal{on}\,\mathbb{S},\quad\tau>0,\label{eq:dct kinematic condition}
\end{equation}
\begin{equation}
g^{\mathfrak{T}}(\theta,\varphi,0)-g^{0}+\sum_{|m|\leq1}b_{1,m}Y_{1,m}+g^{\mathfrak{E}}=f^{11}[\mathbf{x}_{0}].\label{eq:dct initial condition}
\end{equation}
}Here, the solution to (\ref{eq:dce stokes system})-(\ref{eq:dce boundary condition})
enters (\ref{eq:dct system}) and (\ref{eq:dct kinematic condition})
so that the right-hand sides of (\ref{eq:dct system}) and (\ref{eq:dct kinematic condition})
can be regarded as a perturbation of the ellipsoidal part. Moreover,
$U^{\mathfrak{E}}$ and $g^{\mathfrak{E}}$ appear linearly on the
left-hand sides of (\ref{eq:dct kinematic condition}) and (\ref{eq:dct initial condition}).
This special structure of the system (\ref{eq:dct system})-(\ref{eq:dct initial condition})
will allow us to compute approximations to the eigenvalues of a certain
linearized system.

\section{\label{sec:Main theorems for the reformulated problem}The main theorems
for the reformulated problem}

The aim of this section is to state the main theorems for the reformulated
problem (\ref{eq:dce stokes system})-(\ref{eq:dct initial condition}).
Using these main theorems and reversing the procedure in Section \ref{sec:Reformulation of the problem},
we can readily prove Theorem \ref{thm:stable ellipsoidal collapse to a point}.

Our first main theorem is the following:
\begin{thm}[Existence of ellipsoids]
\label{thm:existence of ellipsoids}The set $\{r=1+\epsilon g^{\mathfrak{E}}\}$
of the overdetermined problem (\ref{eq:dce stokes system})-(\ref{eq:dce boundary condition})
must be an ellipsoid. Moreover, assume that $g_{0,0}^{\mathfrak{E}}$
and $\{g_{2,m}^{\mathfrak{E}}\}_{|m|\leq2}$ of $g^{\mathfrak{E}}=\sum_{l,m}g_{l,m}^{\mathfrak{E}}Y_{l,m}$
are given complex constants such that 
\[
g_{0,0}^{\dce},g_{2,0}^{\dce}\in\mathbb{R},\quad g_{2,-1}^{\dce}=-\overline{g_{2,1}^{\dce}}\in\mathbb{C},\quad g_{2,-2}^{\dce}=\overline{g_{2,2}^{\dce}}\in\mathbb{C},
\]
so that $g^{\dce}$ is real. Then for a sufficiently small $\epsilon>0$,
(\ref{eq:dce stokes system})-(\ref{eq:dce boundary condition}) has
a unique solution $(P^{\mathfrak{E}},\mathbf{U}^{\mathfrak{E}},\Phi^{\mathfrak{E}},g^{\mathfrak{E}},t_{0})$
such that 
\[
\{r=1+\epsilon g^{\mathfrak{E}}\}=\{r=1+\epsilon g_{0,0}^{\dce}Y_{0,0}+\epsilon\sum_{|m|\leq2}g_{2,m}^{\dce}Y_{2,m}+O(\epsilon^{2})\}
\]
is an ellipsoid whose semi-axes are of lengths $\mathfrak{a}$, $\mathfrak{b}$,
and $\mathfrak{c}$ and $t_{0}$ satisfies 
\begin{equation}
1+\epsilon t_{0}Y_{0,0}=\frac{\mathfrak{abc}}{2}\int_{0}^{\infty}\frac{ds}{\sqrt{(\mathfrak{a}^{2}+s)(\mathfrak{b}^{2}+s)(\mathfrak{c}^{2}+s)}}.\label{eq:ellipsoid t0}
\end{equation}
The following holds for any integer $n\geq4$: 
\begin{equation}
\Vert P^{\mathfrak{E}}\Vert_{H^{n-2}(\mathbb{B})}+\Vert\mathbf{U}^{\mathfrak{E}}\Vert_{H^{n-1}(\mathbb{B})}+\Vert\Phi^{\mathfrak{E}}\Vert_{H^{n}(\operatorname{ext}\mathbb{B};w)}+\Vert g^{\mathfrak{E}}\Vert_{H^{n-\frac{1}{2}}(\mathbb{S})}+|t_{0}|\leq C(|g_{0,0}^{\dce}|+\sum_{|m|\leq2}|g_{2,m}^{\dce}|),\label{eq:estimate1}
\end{equation}
where $C$ is some constant.\end{thm}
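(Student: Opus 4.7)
The plan is to combine a null quadrature domain rigidity argument, which forces the free-boundary shape to be an ellipsoid, with a Lyapunov--Schmidt style parametrization handling the vanishing linearized eigenvalue at the $Y_{2,m}$ modes. Reversing the Hanzawa map and the self-similar rescaling, the subsystem (\ref{eq:dce vapor})--(\ref{eq:dce boundary condition}) is equivalent, in the original variables, to an overdetermined exterior Laplace problem on $\mathbb{R}^{3}\setminus\overline{\Omega^{\dce}}$: $\Delta\phi=0$ outside, $\phi=1$ on $\partial\Omega^{\dce}$, $\phi\to 0$ at infinity, and the normal flux $\tfrac{1}{2}\partial\phi/\partial n$ prescribed as the self-similar normal speed of $\partial\Omega^{\dce}$. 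By the null quadrature domain characterization of Kellogg, DiBenedetto, and Friedman cited in the paper, such an overdetermined problem admits a solution precisely when $\Omega^{\dce}$ is a solid ellipsoid, and computing the Newtonian capacity of that ellipsoid reproduces formula (\ref{eq:ellipsoid t0}) for $t_{0}$.

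To parametrize this ellipsoidal family I would perform a leading-order spherical harmonic calculation. Since $\Phi^{\dce}$ is the harmonic extension of $g^{\dce}+O(\epsilon)$ into $\operatorname{ext}\mathbb{B}$, writing $\Phi^{\dce}=\sum_{l,m}g_{l,m}^{\dce}r^{-(l+1)}Y_{l,m}$ and substituting into (\ref{eq:dce boundary condition}) with $f^{10}=O(\epsilon)$ yields at leading order
\[
\sum_{l,m}(l-2)g_{l,m}^{\dce}Y_{l,m}+t_{0}Y_{0,0}=0.
\]
Mode by mode this forces $g_{l,m}^{\dce}=0$ for $l=1$ and $l\geq 3$, fixes $t_{0}=2g_{0,0}^{\dce}$, and leaves $\{g_{2,m}^{\dce}\}_{|m|\leq 2}$ unconstrained. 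After imposing the reality conditions stated in the theorem, the resulting six real degrees of freedom correspond bijectively to the six parameters of a centered ellipsoid (one mean-radius scaling plus three semi-axis ratios and three orientation angles), consistent with the null quadrature rigidity and justifying the choice of $(g_{0,0}^{\dce},\{g_{2,m}^{\dce}\}_{|m|\leq 2})$ as the natural free data.

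With these parameters fixed I would construct the full nonlinear solution via a contraction mapping. Decompose $g^{\dce}=g_{0,0}^{\dce}Y_{0,0}+\sum_{|m|\leq 2}g_{2,m}^{\dce}Y_{2,m}+h$ with $h$ in the $L^{2}(\mathbb{S})$-orthogonal complement of the modes $l\in\{0,2\}$, and split $t_{0}=2g_{0,0}^{\dce}+\tilde t_{0}$. On this complement the linearized boundary operator is diagonal in spherical harmonics with eigenvalue $l-2\neq 0$ for $l=1$ and $l\geq 3$, hence an isomorphism from $H^{n-1/2}(\mathbb{S})$ onto $H^{n-3/2}(\mathbb{S})$. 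Standard elliptic theory for the exterior Laplace problem (\ref{eq:dce vapor}) in $H^{n}(\operatorname{ext}\mathbb{B};w)$ and for the interior Stokes problem (\ref{eq:dce stokes system}) with surface tension and rigid-motion constraints provides bounded linear solution operators $g^{\dce}\mapsto\Phi^{\dce}$ and $g^{\dce}\mapsto(P^{\dce},\mathbf{U}^{\dce})$, with the norms appearing in (\ref{eq:estimate1}). Since every $f^{k}$ is $O(\epsilon)$ and at least quadratic in the unknowns, a Banach fixed-point argument on the product Sobolev space produces a unique solution $(P^{\dce},\mathbf{U}^{\dce},\Phi^{\dce},h,\tilde t_{0})$ for all sufficiently small $\epsilon$, and the estimate (\ref{eq:estimate1}) follows from the boundedness of each solution map composed with the smallness of the nonlinear right-hand sides.

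The hardest step is the shape rigidity. Linearized spectral analysis alone exhibits a genuine kernel at $l=2$ and cannot preclude nontrivial corrections in modes $l\geq 3$, so without an additional nonlinear input one obtains only an ellipsoidal \emph{approximation} rather than an exact ellipsoid. The nonlinear null quadrature domain characterization supplies precisely the missing rigidity. A subtle technical point is that this characterization is phrased for the pure exterior Laplace problem, whereas (\ref{eq:dce stokes system})--(\ref{eq:dce boundary condition}) also couples in the Stokes system together with $O(\epsilon)$ Hanzawa corrections; one must verify that these extra terms enter only as $O(\epsilon)$ perturbations compatible with the contraction, so the rigidity governs the shape while the contraction iteratively builds the full compatible solution.
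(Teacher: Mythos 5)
Your opening step (the null quadrature domain rigidity forcing the shape to be an ellipsoid, with the capacity of that ellipsoid giving (\ref{eq:ellipsoid t0})) and your leading-order mode analysis agree with the paper (Lemma \ref{lem:if then ellipsoid}). The genuine gap is in your existence argument. Your contraction solves for $(P^{\dce},\mathbf{U}^{\dce},\Phi^{\dce},h,\tilde t_{0})$ with $h$ orthogonal to the $l\in\{0,2\}$ modes, i.e.\ you invert the linearization only on the modes $l=1$ and $l\geq 3$, with the $l=0$ direction absorbed into $\tilde t_{0}$. But the boundary equation (\ref{eq:dce boundary condition}) also has five scalar projections onto $Y_{2,m}$, which read $0=\left(f^{10}[\Phi^{\dce},g^{\dce},t_{0}]\right)_{2,m}+K_{2,m}^{\dce}$; for these the linearized operator vanishes and your scheme contains no corresponding unknowns. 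A fixed point of your reduced map therefore solves the system with the $l=2$ equations deleted, and nothing in the iteration forces those five compatibility conditions to hold. Appealing to the Friedman--Sakai rigidity cannot close this: that theorem says that \emph{if} the full overdetermined system has a classical solution, then its shape is an ellipsoid; it says nothing about the output of your reduced contraction, whose shape is in general not an ellipsoid and which is not known to solve the full system. So the sentence ``the rigidity governs the shape while the contraction iteratively builds the full compatible solution'' is precisely the missing step, not a proof of it.

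The paper closes this by an explicit construction rather than a fixed point in the shape variable. First, Lemma \ref{lem:bijective lem} parametrizes nearly spherical ellipsoids by a symmetric matrix $\{\alpha_{ij}\}$ and inverts the map $\{\alpha_{ij}\}\mapsto(g_{0,0}^{\dce},\{g_{2,m}^{\dce}\})$ by the implicit function theorem (a genuine bijection, not only the dimension count you give), producing an \emph{exact} ellipsoid $\{r=1+\epsilon g^{\dce}\}$ whose $l=0,2$ coefficients are the prescribed data and whose remaining harmonics are determined of order $\epsilon^{2}$. Second, for an exact ellipsoid the overdetermined vapor problem is exactly solvable: by Proposition \ref{prop:kellogg}, Kellogg's conductor potential has normal derivative $-\frac{E}{\mathfrak{abc}}d_{0}(x,y,z)$, which coincides with the normal velocity of a homothetically shrinking ellipsoid, so choosing $t_{0}$ through $1+\epsilon t_{0}Y_{0,0}=\mathfrak{abc}/E$ (equivalently (\ref{eq:ellipsoid t0})) makes the kinematic condition hold in \emph{all} modes, including the degenerate $l=2$ ones. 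Given $g^{\dce}$, the Stokes block is a solvable, non-overdetermined system, and (\ref{eq:estimate1}) follows from elliptic estimates. If you wish to retain a contraction framework, you must either build it around the exact ellipsoid of Lemma \ref{lem:bijective lem} (in which case there is no shape unknown left to contract) or supply a separate verification of the five $l=2$ conditions; as written, your proposal does neither.
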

\begin{proof}
We will prove Theorem \ref{thm:existence of ellipsoids} in Section
\ref{sec:Existence of ellipsoids}.
\end{proof}
By Theorem \ref{thm:existence of ellipsoids}, it immediately follows
that the leading-order terms of the solution to (\ref{eq:dce stokes system})-(\ref{eq:dce boundary condition})
consist of $g_{0,0}^{\dce}$ and $\left\{ g_{2,m}^{\dce}\right\} _{|m|\leq2}$.
As a result, we can rewrite (\ref{eq:dct kinematic condition}) and
(\ref{eq:dct initial condition}) in the form
\begin{align}
\frac{\partial g^{\dct}}{\partial\tau}-3g^{\mathfrak{T}}-\frac{\partial\Phi^{\mathfrak{T}}}{\partial r}-2e^{-\tau}(\mathbf{U}^{\mathfrak{T}}\cdot\mathbf{e}_{r}-\sigma\frac{10}{19}\sum_{|m|\leq2}g_{2,m}^{\mathfrak{E}}Y_{2,m}) & =e^{-\tau}f^{9}[\mathbf{U},g,t_{0}]+f^{10}[\Phi,g,t_{0}]-f^{10}[\Phi^{\mathfrak{E}},g^{\mathfrak{E}},t_{0}]\label{eq:dct re kinematic}\\
 & \underset{O(\epsilon)}{\underbrace{+2e^{-\tau}(\mathbf{U}^{\mathfrak{E}}\cdot\mathbf{e}_{r}+\sigma\frac{10}{19}\sum_{|m|\leq2}g_{2,m}^{\mathfrak{E}}Y_{2,m})}},\nonumber 
\end{align}
 
\begin{equation}
g^{\mathfrak{T}}(\theta,\varphi,0)-g^{0}+\sum_{|n|\leq1}b_{1,n}Y_{1,n}+g_{0,0}^{\mathfrak{E}}Y_{0,0}+\sum_{|m|\leq2}g_{2,m}^{\mathfrak{E}}Y_{2,m}=f^{11}[\mathbf{x}_{0}]\underset{O(\epsilon)}{\underbrace{-g^{\dce}+g_{0,0}^{\mathfrak{E}}Y_{0,0}+\sum_{|m|\leq2}g_{2,m}^{\mathfrak{E}}Y_{2,m}}}.\label{eq:dct re initial}
\end{equation}
We have introduced the term $\sigma\frac{10}{19}\sum_{|m|\leq2}g_{2,m}^{\mathfrak{E}}Y_{2,m}$
in (\ref{eq:dct re kinematic}) because it corresponds to the linearization
of $\mathbf{U}^{\mathfrak{E}}\cdot\mathbf{e}_{r}$, which will be
apparent in Section \ref{sec:the ODE system}. Consequently, the order
of the right-hand side of (\ref{eq:dct re kinematic}) is $O(\epsilon)$.
Likewise, the order of the right-hand side of (\ref{eq:dct re initial})
is $O(\epsilon)$.

We introduce the function spaces $\mathscr{X}$ and $\mathscr{Y}$.
Define the following norms (see \cite{Friedman:2001vk}): 
\[
\Vert G\Vert_{s,\mathbb{B}}=\left[\int_{0}^{\infty}e^{2\lambda_{0}\tau}\left(\Vert G(\cdot,\tau)\Vert_{H^{s}(\mathbb{B})}^{2}+\Vert G_{\tau}(\cdot,\tau)\Vert_{H^{s-1}(\mathbb{B})}^{2}+\Vert G_{\tau\tau}(\cdot,\tau)\Vert_{H^{s-2}(\mathbb{B})}^{2}\right)d\tau\right]^{1/2},
\]
\[
\Vert\mathbf{G}\Vert_{s,\mathbb{B}}=\left[\int_{0}^{\infty}e^{2\lambda_{0}\tau}\left(\|\mathbf{G}(\cdot,\tau)\|_{H^{s}(\mathbb{B})}^{2}+\|\mathbf{G}_{\tau}(\cdot,\tau)\|_{H^{s-1}(\mathbb{B})}^{2}+\|\mathbf{G}_{\tau\tau}(\cdot,\tau)\|_{H^{s-2}(\mathbb{B})}^{2}\right)d\tau\right]^{1/2},
\]
\[
\Vert g\Vert_{s,\mathbb{S}}=\left[\int_{0}^{\infty}e^{2\lambda_{0}\tau}\left(\Vert g(\cdot,\tau)\Vert_{H^{s}(\mathbb{S})}^{2}+\Vert g_{\tau}(\cdot,\tau)\Vert_{H^{s-1}(\mathbb{S})}^{2}+\Vert g_{\tau\tau}(\cdot,\tau)\Vert_{H^{s-2}(\mathbb{S})}^{2}\right)d\tau\right]^{1/2},
\]
for $s\geq2$. The number $\lambda_{0}$ is positive and will be chosen
appropriately. We can define $\Vert G\|_{s,\operatorname{ext}\mathbb{B};w}$
in a similar way, but introducing the weighted Sobolev space $H^{s}(\operatorname{ext}\mathbb{B};w)$.
Finally, we denote by $\mathscr{X}$ the Banach space as the set of
vectors $\mathsf{x}$ defined by 
\[
\mathsf{x}=\left(P^{\mathfrak{T}},\mathbf{U}^{\mathfrak{T}},\Phi^{\mathfrak{T}},g^{\mathfrak{T}},g_{0,0}^{\mathfrak{E}}\in\mathbb{R},\left\{ b_{1,n}\in\mathbb{C}\right\} _{\left\vert n\right\vert \leq1},\left\{ g_{2,m}^{\mathfrak{E}}\in\mathbb{C}\right\} _{\left\vert m\right\vert \leq2}\right)
\]
such that 
\[
\Vert\mathsf{x}\Vert_{\mathscr{X}}=\Vert P^{\mathfrak{T}}\Vert_{5,\mathbb{B}}+\Vert\mathbf{U}^{\mathfrak{T}}\Vert_{6,\mathbb{B}}+\Vert\Phi^{\mathfrak{T}}\Vert_{7,\operatorname{ext}\mathbb{B};w}+\Vert g^{\mathfrak{T}}\Vert_{\frac{13}{2},\mathbb{S}}+|g_{0,0}^{\mathfrak{E}}|+\sum_{\left\vert n\right\vert \leq1}|b_{1,n}|+\sum_{\left\vert m\right\vert \leq2}|g_{2,m}^{\mathfrak{E}}|<\infty.
\]
Moreover, Let $\mathscr{Y}$ be the Banach space of elements 
\[
\mathsf{y}=(F^{1;\mathfrak{T}},\mathbf{F}^{2;\mathfrak{T}},F^{3;\mathfrak{T}},\mathbf{F}^{4;\mathfrak{T}},\mathbf{F}^{5;\mathfrak{T}},\mathbf{F}^{6;\mathfrak{T}},F^{7;\mathfrak{T}},F^{8;\mathfrak{T}},F^{9,10;\mathfrak{T}},F^{11;\mathfrak{T}})
\]
such that 
\begin{align*}
\Vert\mathsf{y}\Vert_{\mathscr{Y}} & =\Vert F^{1;\mathfrak{T}}\Vert_{3,\mathbb{B}}+\Vert\mathbf{F}^{2;\mathfrak{T}}\Vert_{4,\mathbb{B}}+\Vert F^{3;\mathfrak{T}}\Vert_{\frac{9}{2},\mathbb{B}}+\Vert\mathbf{F}^{4;\mathfrak{T}}\Vert_{\frac{9}{2},\mathbb{S}}\\
 & +\left(\int_{0}^{\infty}e^{2\lambda_{0}\tau}\sum_{k=0}^{2}|D_{\tau}^{k}\mathbf{F}^{5;\mathfrak{T}}|^{2}d\tau\right)^{1/2}+\left(\int_{0}^{\infty}e^{2\lambda_{0}\tau}\sum_{k=0}^{2}|D_{\tau}^{k}\mathbf{F}^{6;\mathfrak{T}}|^{2}d\tau\right)^{1/2}\\
 & +\Vert F^{7;\mathfrak{T}}\Vert_{5,\operatorname{ext}\mathbb{B};w}+\Vert F^{8;\mathfrak{T}}\Vert_{\frac{13}{2},\mathbb{S}}+\Vert F^{9,10;\mathfrak{T}}\Vert_{\frac{11}{2},\mathbb{S}}+\Vert F^{11;\mathfrak{T}}\Vert_{H^{6}(\mathbb{S})}<\infty.
\end{align*}

We now introduce the linear operator $\mathcal{L}$ defined by the
left-hand sides of the system (\ref{eq:dct system}), (\ref{eq:dct re kinematic}),
(\ref{eq:dct re initial}) and the nonlinear operator $\mathcal{Y}$
defined by the right-hand sides of (\ref{eq:dct system}), (\ref{eq:dct re kinematic}),
(\ref{eq:dct re initial}). By Theorem \ref{thm:existence of ellipsoids},
$\mathcal{L}$ and $\mathcal{Y}$ can be regarded as operators defined
on the Banach space $\mathscr{X}$. In addition, $\mathcal{Y}[\mathsf{x}]$
is a perturbation of the ellipsoidal solution that comes from Theorem
\ref{thm:existence of ellipsoids}. By using these facts, one can
easily check that $\mathcal{Y}\colon\mathscr{X}\rightarrow\mathscr{Y}$.
Therefore the nonlinear problem (\ref{eq:dct system}), (\ref{eq:dct re kinematic}),
(\ref{eq:dct re initial}) can be compactly written in the form 
\[
\mathcal{L}[\mathsf{x}]=\mathcal{Y}[\mathsf{x}],\quad\mathcal{L}\colon\mathscr{X}\rightarrow\mathscr{Y},\quad\mathcal{Y}\colon\mathscr{X}\rightarrow\mathscr{Y}.
\]

Consider first the auxiliary linear evolution problem 
\begin{equation}
\mathcal{L}[\mathsf{x}]=\mathsf{y,\quad y}\in\mathscr{Y}.\label{eq:aux linear problem}
\end{equation}
Then the following theorem holds for (\ref{eq:aux linear problem}):
\begin{thm}[The linear evolution problem]
\label{thm:the linear evolution problem}Assume that $\mathsf{y}\in\mathscr{Y}$
and $g^{0}\in H^{6}(\mathbb{S})$. Then there exists a unique solution
$\mathsf{x}\in\mathscr{X}$ to (\ref{eq:aux linear problem}), which
satisfies the following estimate: 
\begin{equation}
\Vert\mathsf{x}\Vert_{\mathscr{X}}\leq C(\Vert g^{0}\Vert_{H^{6}(\mathbb{S})}+\Vert\mathsf{y}\Vert_{\mathscr{Y}}),\label{eq:est linear problem}
\end{equation}
for some constant $C$.\end{thm}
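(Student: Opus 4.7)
My plan is to diagonalise $\mathcal{L}$ via (vector) spherical harmonics, reducing (\ref{eq:aux linear problem}) to a decoupled family of scalar first-order ODEs in $\tau$ for the coefficients $g^{\mathfrak{T}}_{l,m}(\tau)$, and then to absorb the finitely many non-decaying modes into the free parameters $g^{\mathfrak{E}}_{0,0}$, $\{b_{1,n}\}_{|n|\leq 1}$, $\{g^{\mathfrak{E}}_{2,m}\}_{|m|\leq 2}$. At each frozen $\tau$, the interior Stokes system for $(\mathbf{U}^{\mathfrak{T}},P^{\mathfrak{T}})$ and the exterior Laplace problem for $\Phi^{\mathfrak{T}}$ on the left-hand side of (\ref{eq:dct system}) are stationary elliptic boundary value problems whose data depend linearly on $g^{\mathfrak{T}}$ and on the forcing $\mathsf{y}$. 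Using the explicit ODE solutions produced in Section~\ref{sec:the ODE system}, I write $\mathbf{U}^{\mathfrak{T}}\cdot\mathbf{e}_r|_{\mathbb{S}}$ and $\partial_r\Phi^{\mathfrak{T}}|_{\mathbb{S}}$ mode by mode as explicit linear multipliers of $g^{\mathfrak{T}}_{l,m}$ plus forcings; in particular, the exterior harmonic $\Phi^{\mathfrak{T}}\propto r^{-l-1}Y_{l,m}$ contributes the factor $-(l+1)$ to $\partial_r\Phi^{\mathfrak{T}}|_{\mathbb{S}}$.

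Substitution into (\ref{eq:dct re kinematic}) yields, for each $(l,m)$, a scalar ODE
\begin{equation*}
\frac{d g^{\mathfrak{T}}_{l,m}}{d\tau}-(2-l)\,g^{\mathfrak{T}}_{l,m}=c_{l,m}\,e^{-\tau}\,g^{\mathfrak{T}}_{l,m}+h_{l,m}(\tau),
\end{equation*}
where $h_{l,m}$ collects the $\mathsf{y}$-projections and the parameter contributions visible on the left of (\ref{eq:dct re kinematic}); the initial value is the $Y_{l,m}$-projection of (\ref{eq:dct re initial}). The asymptotic eigenvalues $\mu^{\infty}_{l,m}=2-l$ split the spectrum into the unstable group $l\in\{0,1\}$ (four real scalar modes), the center group $l=2$ (five modes), and the strictly stable tail $l\geq 3$ with $\mu^{\infty}_{l,m}\leq -1$. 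For the tail a direct Gronwall estimate in the weight $e^{2\lambda_0\tau}$, with any $0<\lambda_0<1$, controls $g^{\mathfrak{T}}_{l,m}$ by $\|h_{l,m}\|$ and the initial datum, hence by $\|\mathsf{y}\|_{\mathscr{Y}}+\|g^{0}\|_{H^{6}(\mathbb{S})}$.

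For the unstable modes I write the unique bounded solution by Duhamel integration from $\tau=\infty$ and impose that its $\tau=0$ trace match (\ref{eq:dct re initial}); this produces a non-degenerate linear $4\times 4$ algebraic system whose unknowns are $g^{\mathfrak{E}}_{0,0}$ and the three $b_{1,n}$, fixing them uniquely in terms of $g^{0}$ and $\mathsf{y}$. The real obstacle is the center mode $l=2$, where the zero eigenvalue would ordinarily produce secular growth incompatible with $\mathscr{X}$. This is exactly why the decomposition of Section~\ref{sec:Reformulation of the problem} placed the term $+\,2e^{-\tau}\sigma\tfrac{10}{19}\,g^{\mathfrak{E}}_{2,m}Y_{2,m}$ on the left of (\ref{eq:dct re kinematic}): in the $(2,m)$ ODE it acts as an integrable source linear in the free parameter $g^{\mathfrak{E}}_{2,m}$. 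Writing the $(2,m)$ equation in integrating-factor form and requiring $g^{\mathfrak{T}}_{2,m}(\tau)\to 0$ yields a single linear solvability condition that determines $g^{\mathfrak{E}}_{2,m}$ uniquely; the coefficient $e^{-\sigma\frac{20}{19}}$ advertised in (\ref{eq:gdce thm1.1}) arises as the limit at $\tau=\infty$ of the integrating factor generated by the slowly decaying coefficient $2\sigma\tfrac{10}{19}e^{-\tau}$. With $g^{\mathfrak{E}}_{2,m}$ so chosen, $g^{\mathfrak{T}}_{2,m}$ decays exponentially, giving the required weighted bound.

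Once $\|g^{\mathfrak{T}}\|_{\frac{13}{2},\mathbb{S}}$ is controlled by $\|\mathsf{y}\|_{\mathscr{Y}}+\|g^{0}\|_{H^{6}(\mathbb{S})}$, standard stationary elliptic regularity for the Stokes system on $\mathbb{B}$ and for the exterior Laplace problem on $\operatorname{ext}\mathbb{B}$ (in the weighted space $H^{s}(\,\cdot\,;w)$) transfers the estimate to $P^{\mathfrak{T}},\mathbf{U}^{\mathfrak{T}},\Phi^{\mathfrak{T}}$ in the norms demanded by $\mathscr{X}$; the two extra $\tau$-derivatives built into $\|\cdot\|_{s,\mathbb{B}}$ etc.\ are obtained by differentiating the scalar ODEs twice in $\tau$ and reapplying the elliptic solvers pointwise. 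Uniqueness is automatic: with $\mathsf{y}=0$ and $g^{0}=0$, the same Gronwall and solvability arguments force all modes together with the five free parameters to vanish, giving (\ref{eq:est linear problem}).
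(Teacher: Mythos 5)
Your proposal is correct and follows essentially the same route as the paper: reduce via the Section \ref{sec:the ODE system} mode computations to scalar ODEs with asymptotic eigenvalues $l-2$, fix $g_{0,0}^{\mathfrak{E}}$, $\{b_{1,n}\}$ by killing the growing $l=0,1$ solutions, fix $\{g_{2,m}^{\mathfrak{E}}\}$ by the decay condition on the neutral $l=2$ mode (with the integrating factor producing $e^{-\sigma\frac{20}{19}}$), use a weighted Gronwall-type bound (the paper's Lemma \ref{lem:ode lemma}, requiring $0<\lambda_{0}<1$) for $l\geq3$, and recover $(P^{\mathfrak{T}},\mathbf{U}^{\mathfrak{T}},\Phi^{\mathfrak{T}})$ by elliptic estimates. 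The only cosmetic difference is that you package the determination of the $l\leq1$ parameters as a $4\times4$ matching system, whereas the paper reads them off mode by mode since the system is diagonal.
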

\begin{proof}
It will be given in Section \ref{sec:The linear evolution problem}.
\end{proof}
By Theorem \ref{thm:the linear evolution problem}, the operator $\mathcal{L}$
is invertible, so that we can define the operator $\mathcal{A}$:
\begin{equation}
\mathcal{A}[\mathsf{x}]:=\mathcal{L}^{-1}[\mathcal{Y}[\mathsf{x}]],\quad\mathcal{A}\colon\mathscr{X}\rightarrow\mathscr{X}.\label{s411}
\end{equation}
Our last main theorem verifies that the perturbed solution (\ref{eq:self-similar variables solution})
in the $\xi$-space converges to the ellipsoidal solution that is
deduced from Theorem \ref{thm:existence of ellipsoids}:
\begin{thm}[The nonlinear evolution problem]
\label{thm:the nonlinear evolution problem}If $g^{0}\in H^{6}(\mathbb{S})$,
then for a sufficiently small $\epsilon>0$, the nonlinear problem
\begin{equation}
\mathsf{x}=\mathcal{A}[\mathsf{x}],\label{eq:nonlinear problem}
\end{equation}
where $\mathcal{A}$ is defined by (\ref{s411}), has a unique solution
$\mathsf{x}\in\mathscr{X}$.\end{thm}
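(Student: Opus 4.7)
The plan is to apply the Banach contraction mapping principle to $\mathcal{A}=\mathcal{L}^{-1}\circ\mathcal{Y}$ on a closed ball $\mathscr{B}_{\rho}=\{\mathsf{x}\in\mathscr{X}:\|\mathsf{x}\|_{\mathscr{X}}\le\rho\}$ whose radius is proportional to $\|g^{0}\|_{H^{6}(\mathbb{S})}$. Since Theorem~\ref{thm:the linear evolution problem} furnishes a bounded right-inverse $\mathcal{L}^{-1}\colon\mathscr{Y}\to\mathscr{X}$ with operator norm independent of $\epsilon$ and with the $g^{0}$-dependence displayed in (\ref{eq:est linear problem}), the entire problem reduces to proving that $\mathcal{Y}\colon\mathscr{X}\to\mathscr{Y}$ is well-defined, small, and Lipschitz with Lipschitz constant $O(\epsilon)$:
\begin{equation}
\|\mathcal{Y}[\mathsf{x}]\|_{\mathscr{Y}}\le C\epsilon\|\mathsf{x}\|_{\mathscr{X}}\bigl(1+\|\mathsf{x}\|_{\mathscr{X}}\bigr),\qquad \|\mathcal{Y}[\mathsf{x}]-\mathcal{Y}[\tilde{\mathsf{x}}]\|_{\mathscr{Y}}\le C\epsilon\bigl(1+\|\mathsf{x}\|_{\mathscr{X}}+\|\tilde{\mathsf{x}}\|_{\mathscr{X}}\bigr)\|\mathsf{x}-\tilde{\mathsf{x}}\|_{\mathscr{X}},
\end{equation}
valid whenever $\epsilon(\|\mathsf{x}\|_{\mathscr{X}}+\|\tilde{\mathsf{x}}\|_{\mathscr{X}})$ is sufficiently small.

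To establish these two estimates I would inspect each of the nonlinearities $f^{1},\mathbf{f}^{2},f^{3},\mathbf{f}^{4},\mathbf{f}^{5},\mathbf{f}^{6},f^{7},f^{8},f^{9},f^{10},f^{11}$ together with the explicit $O(\epsilon)$ corrections appearing on the right of (\ref{eq:dct re kinematic})--(\ref{eq:dct re initial}). Each one has the schematic form $\epsilon\cdot N(\epsilon\mathsf{x},\epsilon g^{\mathfrak{E}})$ where $N$ is an analytic function of its argument: compositions of rational factors $(1-\Psi_{z}\epsilon g)^{-1}$ and $(1+\epsilon g)^{-1}$ with polynomial expressions in $\mathsf{x}$ and a bounded number of its derivatives, plus the curvature remainder $f^{4;\kappa}$. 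The regularity indices in the definitions of $\mathscr{X}$ and $\mathscr{Y}$ are matched to the losses in these expressions, so that Sobolev embedding forces $|\epsilon g|\ll 1$ pointwise (keeping all denominators bounded away from zero) and the algebra property $\|uv\|_{H^{s}}\le C\|u\|_{H^{s}}\|v\|_{H^{s}}$ applies at every step. The first and second $\tau$-derivatives required by $\mathscr{Y}$ are absorbed by the built-in $\tau$-derivative weights in the norms $\|\cdot\|_{s,\mathbb{B}}$, $\|\cdot\|_{s,\mathbb{S}}$, $\|\cdot\|_{s,\operatorname{ext}\mathbb{B};w}$, while the ellipsoidal subtractions $f^{k}[\mathbf{U},P,g]-f^{k}[\mathbf{U}^{\mathfrak{E}},P^{\mathfrak{E}},g^{\mathfrak{E}}]$ are controlled via Theorem~\ref{thm:existence of ellipsoids} and estimate (\ref{eq:estimate1}), which bound $(P^{\mathfrak{E}},\mathbf{U}^{\mathfrak{E}},\Phi^{\mathfrak{E}},g^{\mathfrak{E}},t_{0})$ in the appropriate norms by $C(|g^{\mathfrak{E}}_{0,0}|+\sum_{|m|\le 2}|g^{\mathfrak{E}}_{2,m}|)\le C\|\mathsf{x}\|_{\mathscr{X}}$.

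Composing with $\mathcal{L}^{-1}$ and using (\ref{eq:est linear problem}) then yields
\[
\|\mathcal{A}[\mathsf{x}]\|_{\mathscr{X}}\le C_{0}\|g^{0}\|_{H^{6}(\mathbb{S})}+C_{1}\epsilon\|\mathsf{x}\|_{\mathscr{X}}(1+\|\mathsf{x}\|_{\mathscr{X}}),\quad \|\mathcal{A}[\mathsf{x}]-\mathcal{A}[\tilde{\mathsf{x}}]\|_{\mathscr{X}}\le C_{2}\epsilon(1+\|\mathsf{x}\|_{\mathscr{X}}+\|\tilde{\mathsf{x}}\|_{\mathscr{X}})\|\mathsf{x}-\tilde{\mathsf{x}}\|_{\mathscr{X}}.
\]
Setting $\rho:=2C_{0}\|g^{0}\|_{H^{6}(\mathbb{S})}$ and then choosing $\epsilon$ small enough that both $C_{1}\epsilon(1+\rho)\le 1/2$ and $C_{2}\epsilon(1+2\rho)\le 1/2$, $\mathcal{A}$ sends $\mathscr{B}_{\rho}$ into itself and is a contraction there, so the Banach fixed point theorem yields a unique solution in $\mathscr{B}_{\rho}$. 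Uniqueness in all of $\mathscr{X}$ follows from the Lipschitz estimate applied to any two putative solutions, absorbing the $\|\mathsf{x}-\tilde{\mathsf{x}}\|_{\mathscr{X}}$ term on the left once $\epsilon$ is sufficiently small (relative to an a priori bound that any $\mathscr{X}$-solution must satisfy, obtained by running $\mathcal{A}$ once).

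The main obstacle I expect is the exhaustive nonlinear bookkeeping required to verify the two displayed estimates for $\mathcal{Y}$: each of the eleven nonlinearities has to be tracked through its dependence on $\mathsf{x}$, $g^{\mathfrak{E}}$ and their $\tau$-derivatives up to second order to extract an explicit prefactor of $\epsilon$, while respecting the weighted Sobolev structure used for $\Phi^{\mathfrak{T}}$ on the unbounded exterior domain and the peculiar spectral ingredients (the $b_{1,n}$ and $g^{\mathfrak{E}}_{2,m}$ parameters) included in $\mathsf{x}$. Once this technical step is in place, the contraction argument is entirely standard.
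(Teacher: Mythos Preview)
Your proposal is correct and follows essentially the same route as the paper: a Banach fixed-point argument for $\mathcal{A}=\mathcal{L}^{-1}\circ\mathcal{Y}$ on a ball of radius $2C\|g^{0}\|_{H^{6}(\mathbb{S})}$, using Theorem~\ref{thm:the linear evolution problem} for the bounded inverse and an $O(\epsilon)$ smallness/Lipschitz bound on $\mathcal{Y}$. The paper records the key estimates in the slightly simpler form $\|\mathcal{Y}[\mathsf{x}]\|_{\mathscr{Y}}\le C_{0}\epsilon\|\mathsf{x}\|_{\mathscr{X}}$ and $\|\mathcal{Y}[\tilde{\mathsf{x}}_{1}]-\mathcal{Y}[\tilde{\mathsf{x}}_{2}]\|_{\mathscr{Y}}\le C_{0}\epsilon\|\tilde{\mathsf{x}}_{1}-\tilde{\mathsf{x}}_{2}\|_{\mathscr{X}}$ (already restricted to the ball, so your factor $(1+\|\mathsf{x}\|_{\mathscr{X}})$ is absorbed), and for the bookkeeping you flag as the main obstacle it invokes the multiplicative property $\|fg\|\le C_{1}\|f\|\,\|g\|$ of the $\mathscr{X}$/$\mathscr{Y}$ norms and refers to Section~4 of \cite{Bazaliy:2003ic} rather than writing the verification out.
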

\begin{proof}
The proof is based on the contraction mapping principle. One can easily
check that 
\begin{equation}
\Vert\mathcal{Y}[\mathsf{x}]\Vert_{\mathscr{Y}}\leq C_{0}\epsilon\Vert\mathsf{x}\Vert_{\mathscr{X}}\label{s412}
\end{equation}
for any $\mathsf{x}\in\mathscr{X}$ such that $\Vert\mathsf{x}\Vert_{\mathscr{X}}\leq2C\Vert g^{0}\Vert_{H^{6}(\mathbb{S})}$,
where $C$ is the constant in (\ref{eq:est linear problem}) and $C_{0}$
is some appropriate constant. Therefore, using (\ref{eq:est linear problem}),
we deduce 
\[
\Vert\mathcal{A}[\mathsf{x}]\Vert_{\mathscr{X}}\leq C(\Vert g^{0}\Vert_{H^{6}(\mathbb{S})}+C_{0}\epsilon\Vert\mathsf{x}\Vert_{\mathscr{X}})
\]
so that $\mathcal{A}$ maps the ball of radius $R=2C\Vert g^{0}\Vert_{H^{6}(\mathbb{S})}$
into itself for a sufficiently small $\epsilon>0$. We now verify
that $\mathcal{A}$ is a strict contraction. By Theorem \ref{thm:the linear evolution problem},
for any $\tilde{\mathsf{x}}_{1}$ and $\tilde{\mathsf{x}}_{2}$ in
$\mathscr{X}$, there exist unique solutions $\mathsf{x}_{1}$ and
$\mathsf{x}_{2}$ in $\mathscr{X}$ such that $\mathcal{L}[\mathsf{x}_{1}]=\mathcal{Y}[\tilde{\mathsf{x}}_{1}]$
and $\mathcal{L}[\mathsf{x}_{2}]=\mathcal{Y}[\tilde{\mathsf{x}}_{2}]$,
respectively. As a result, for $\tilde{\mathsf{x}}_{1}$ and $\tilde{\mathsf{x}}_{2}$
in the ball of radius $R$, we conclude 
\begin{equation}
\Vert\mathsf{x}_{1}-\mathsf{x}_{2}\Vert_{\mathscr{X}}\leq C\Vert\mathcal{Y}[\tilde{\mathsf{x}}_{1}]-\mathcal{Y}[\tilde{\mathsf{x}}_{2}]\Vert_{\mathscr{Y}}\leq C_{0}\epsilon\Vert\tilde{\mathsf{x}}_{1}-\tilde{\mathsf{x}}_{2}\Vert_{\mathscr{X}}\label{s413}
\end{equation}
and hence $\mathcal{A}$ is a strict contraction for a sufficiently
small $\epsilon>0$. Finally, by the contraction mapping principle,
(\ref{eq:nonlinear problem}) has a unique solution in the Banach
space $\mathscr{X}$.

In fact, the proof of the inequalities (\ref{s412}) and (\ref{s413})
is quite lengthy. However, one can show the inequalities in exactly
the same way as Section 4 of \cite{Bazaliy:2003ic}, whose proof requires
the following multiplicative property of the norms of the spaces $\mathscr{X}$
and $\mathscr{Y}$: 
\[
\left\Vert fg\right\Vert \leq C_{1}\left\Vert f\right\Vert \left\Vert g\right\Vert 
\]
for some appropriate constant $C_{1}$ (see \cite{Friedman:2001vk}).
\end{proof}

\section{\label{sec:the ODE system}The system of ordinary differential equations}

In this section we solve the following auxiliary problem: 
\begin{align}
 & \Delta P=F^{1} &  & \textnormal{in}\,\mathbb{B},\label{eq:auxode1}\\
 & -\nabla P+\Delta\mathbf{U}=\mathbf{F}^{2} &  & \textnormal{in}\,\mathbb{B},\label{eq:auxode2}\\
 & \operatorname{div}\mathbf{U}=F^{3} &  & \textnormal{on}\,\mathbb{S},\label{eq:auxode3}\\
 & [-P\mathbf{I}+\nabla\mathbf{U}+(\nabla\mathbf{U})^{T}]\mathbf{e}_{r}-\sigma(g+\frac{1}{2}\Delta_{\omega}g)\mathbf{e}_{r}=\mathbf{F}^{4} &  & \textnormal{on}\,\mathbb{S},\label{eq:auxode4}\\
 & \int_{\mathbb{B}}\mathbf{U}dV=\mathbf{F}^{5},\label{eq:auxode5}\\
 & \int_{\mathbb{B}}\left(\mathbf{U}\times(r\sin\theta\cos\varphi,r\sin\theta\sin\varphi,r\cos\theta)\right)dV=\mathbf{F}^{6},\label{eq:auxode6}\\
 & \Delta\Phi=F^{7} &  & \textnormal{in}\,\operatorname{ext}\mathbb{B},\label{eq:auxode7}\\
 & \Phi-g=F^{8} &  & \textnormal{on}\,\mathbb{S},\label{eq:auxode8}\\
 & \Phi\rightarrow0 &  & \textnormal{as}\, r\rightarrow\infty.\label{eq:auxode9}
\end{align}
The spherical harmonics form a complete set in a suitably weighted
$L^{2}$ space. Likewise, the vector spherical harmonics form an orthonormal
basis of a suitably weighted $L^{2}$ space. Thus we write the solution
to (\ref{eq:auxode1})-(\ref{eq:auxode9}) as the following expansions
in terms of the spherical harmonics and the vector spherical harmonics:
\begin{equation}
\begin{cases}
\mathbf{U}=\sum_{l,m}\left(U_{l,m}^{V}\mathbf{V}_{l,m}+U_{l,m}^{X}\mathbf{X}_{l,m}+U_{l,m}^{W}\mathbf{W}_{l,m}\right),\\
P=\sum_{l,m}P_{l,m}Y_{l,m},\\
\Phi=\sum_{l,m}\Phi_{l,m}Y_{l,m},\\
g=\sum_{l,m}g_{l,m}Y_{l,m}.
\end{cases}\label{s415}
\end{equation}
The exact formulas and fundamental properties of these eigenfunctions
are given in Appendix \ref{sec:S Harmonics and Vector S Harmonics}.
Analogously, the non-homogeneous terms in (\ref{eq:auxode1})-(\ref{eq:auxode8})
have the following expansions:
\begin{equation}
\begin{cases}
F^{1}=\sum_{l,m}F_{l,m}^{1}Y_{l,m},\\
\mathbf{F}^{2}=\sum_{l,m}\left(F_{l,m}^{2;V}\mathbf{V}_{l,m}+F_{l,m}^{2;X}\mathbf{X}_{l,m}+F_{l,m}^{2;W}\mathbf{W}_{l,m}\right)\\
F^{3}=\sum_{l,m}F_{l,m}^{3}Y_{l,m},\\
\mathbf{F}^{4}=\sum_{l,m}\left(F_{l,m}^{4;V}\mathbf{V}_{l,m}+F_{l,m}^{4;X}\mathbf{X}_{l,m}+F_{l,m}^{4;W}\mathbf{W}_{l,m}\right)\\
F^{7}=\sum_{l,m}F_{l,m}^{7}Y_{l,m},\\
F^{8}=\sum_{l,m}F_{l,m}^{8}Y_{l,m}.
\end{cases}\label{s416}
\end{equation}
Moreover, we write 
\begin{equation}
\begin{cases}
\mathbf{F}^{5}=F_{1}^{5}\mathbf{e}_{1}+F_{2}^{5}\mathbf{e}_{2}+F_{3}^{5}\mathbf{e}_{3},\\
\mathbf{F}^{6}=F_{1}^{6}\mathbf{e}_{1}+F_{2}^{6}\mathbf{e}_{2}+F_{3}^{6}\mathbf{e}_{3},
\end{cases}\label{s417}
\end{equation}
where $\{\mathbf{e}_{1},\mathbf{e}_{2},\mathbf{e}_{3}\}$ is the standard
basis for $\mathbb{R}^{3}$. We construct the explicit forms $U_{l,m}^{V}$,
$U_{l,m}^{X}$, $,U_{l,m}^{W}$, $P_{l,m}$, $\Phi_{l,m}$ in terms
of $g_{l,m}$ and the coefficients of the expansions in (\ref{s416}),
(\ref{s417}). As a consequence, we deduce the following expansions
on $r=1$, which are in (\ref{eq:dce boundary condition}), (\ref{eq:dct kinematic condition}),
and (\ref{eq:dct re kinematic}): 
\[
\mathbf{U}\cdot\mathbf{e}_{r}=\sum_{l,m}(\mathbf{U}\cdot\mathbf{e}_{r})_{l,m}Y_{l,m},
\]
\[
\frac{\partial\Phi}{\partial r}=\sum_{l,m}\left(\frac{\partial\Phi}{\partial r}\right)_{l,m}Y_{l,m}.
\]

\subsection{The derivation of $\left(\mathbf{U}\cdot\mathbf{e}_{r}\right)_{l,m}$
on $r=1$}

By (\ref{eq:laplacian FYlm}), the equation (\ref{eq:auxode1}) becomes
\begin{equation}
L_{l}P_{l,m}=F_{l,m}^{1},\quad0<r<1.\label{eq:laplacian P ode lm}
\end{equation}
Furthermore, by (\ref{eq:laplacian FYlm})-(\ref{eq:gradient FYlm}),
the equation (\ref{eq:auxode2}) becomes 
\begin{equation}
\begin{cases}
L_{l+1}U_{l,m}^{V}-\left(\frac{l+1}{2l+1}\right)^{1/2}\left(-\frac{\partial}{\partial r}+\frac{l}{r}\right)P_{l,m}=F_{l,m}^{2;V}, & r<1,\\
L_{l}U_{l,m}^{X}=F_{l,m}^{2;X}, & r<1,\\
L_{l-1}U_{l,m}^{W}-\left(\frac{l}{2l+1}\right)^{1/2}\left(\frac{\partial}{\partial r}+\frac{l+1}{r}\right)P_{l,m}=F_{l,m}^{2;W}, & r<1.
\end{cases}\label{eq:stokes ode lm}
\end{equation}
The general solution to (\ref{eq:laplacian P ode lm}) and (\ref{eq:stokes ode lm})
is 
\begin{equation}
\begin{cases}
P_{l,m}(r)=P_{l,m}^{1}r^{l}+H_{l,m}^{P},\\
U_{l,m}^{V}(r)=V_{l,m}^{1}r^{l+1}+H_{l,m,}^{V}\\
U_{l,m}^{X}(r)=X_{l,m}^{1}r^{l}+H_{l,m}^{X},\\
U_{l,m}^{W}(r)=W_{l,m}^{1}r^{l-1}+\frac{1}{2}\left(\frac{l}{2l+1}\right)^{1/2}P_{l,m}^{1}r^{l+1}+H_{l,m}^{W},
\end{cases}\label{eq:general solution of stokes ode lm}
\end{equation}
where the solution to the non-homogeneous part is as follows: 
\[
H_{l,m}^{P}=-r^{l}\int_{r}^{1}\frac{s^{-l+1}}{2l+1}F_{l,m}^{1}ds-r^{-l-1}\int_{0}^{r}\frac{s^{l+2}}{2l+1}F_{l,m}^{1}ds,
\]
\begin{align*}
H_{l,m}^{V} & =-r^{l+1}\int_{r}^{1}\frac{s^{-l}}{2l+3}F_{l,m}^{2;V}ds-r^{-l-2}\int_{0}^{r}\frac{s^{l+3}}{2l+3}F_{l,m}^{2;V}ds\\
 & +\left(\frac{l+1}{2l+1}\right)^{1/2}\frac{1}{2l+3}\left[\frac{1}{2l+1}(r^{-l}-r^{l+1})\int_{0}^{r}s^{l+2}F_{l,m}^{1}ds\right.\\
 & \left.+\frac{1}{2l+1}r^{l+1}\int_{r}^{1}(s^{-l+1}-s^{l+2})F_{l,m}^{1}ds+r^{-l-2}\int_{0}^{r}s^{l+2}\frac{r^{2}-s^{2}}{2}F_{l,m}^{1}ds\right],
\end{align*}
\[
H_{l,m}^{X}=-r^{l}\int_{r}^{1}\frac{s^{-l+1}}{2l+1}F_{l,m}^{2;X}ds-r^{-l-1}\int_{0}^{r}\frac{s^{l+2}}{2l+1}F_{l,m}^{2;X}ds,
\]
\begin{align*}
H_{l,m}^{W} & =-r^{l-1}\int_{r}^{1}\frac{s^{-l+2}}{2l-1}F_{l,m}^{2;W}ds-r^{-l}\int_{0}^{r}\frac{s^{l+1}}{2l-1}F_{l,m}^{2;W}ds\\
 & +\left(\frac{l}{2l+1}\right)^{1/2}\frac{1}{2l-1}\left[r^{l-1}\int_{r}^{1}s^{-l+1}\frac{s^{2}-r^{2}}{2}F_{l,m}^{1}ds\right.\\
 & \left.+\frac{r^{-l}}{2l+1}\left(\int_{0}^{r}s^{l+2}F_{l,m}^{1}ds+\int_{r}^{1}s^{-l+1}r^{2l+1}F_{l,m}^{1}ds\right)\right].
\end{align*}

We deduce (\ref{eq:auxode3})-(\ref{eq:auxode6}) in terms of the
spherical harmonics expansions. By (\ref{eq:divergence FVlm})-(\ref{eq:divergence FWlm}),
the boundary condition (\ref{eq:auxode3}) reduces to 
\begin{equation}
-\left(\frac{l+1}{2l+1}\right)^{1/2}\left(\frac{\partial}{\partial r}+l+2\right)U_{l,m}^{V}+\left(\frac{l}{2l+1}\right)^{1/2}\left(\frac{\partial}{\partial r}-l+1\right)U_{l,m}^{W}=F_{l,m}^{3},\quad r=1.\label{eq:div lm}
\end{equation}
A direct computation from Lemma 4.1 in \cite{Friedman:2002en} shows
that (\ref{eq:auxode4}) becomes 
\begin{align}
 & \left(\frac{3l+2}{2l+1}\frac{\partial}{\partial r}-\frac{l^{2}+2l}{2l+1}\right)U_{l,m}^{V}-\frac{l^{1/2}(l+1)^{1/2}}{2l+1}\left(\frac{\partial}{\partial r}-l+1\right)U_{l,m}^{W}+\left(\frac{l+1}{2l+1}\right)^{1/2}P_{l,m}\notag\\
 & +\sigma\left(\frac{l+1}{2l+1}\right)^{1/2}\left(1-\frac{l^{2}+l}{2}\right)g_{l,m}=F_{l,m}^{4;V},\quad r=1,\label{eq:normal stress Vlm}
\end{align}
\begin{equation}
\left(\frac{\partial}{\partial r}-1\right)U_{l,m}^{X}=F_{l,m}^{4;X},\quad r=1,\label{eq:normal stress Xlm}
\end{equation}
\begin{align}
 & -\frac{l^{1/2}(l+1)^{1/2}}{2l+1}\left(\frac{\partial}{\partial r}+l+2\right)U_{l,m}^{V}+\left(\frac{3l+1}{2l+1}\frac{\partial}{\partial r}+\frac{l^{2}-1}{2l+1}\right)U_{l,m}^{W}-\left(\frac{l}{2l+1}\right)^{1/2}P_{l,m}\notag\\
 & -\sigma\left(\frac{l}{2l+1}\right)^{1/2}\left(1-\frac{l^{2}+l}{2}\right)g_{l,m}=F_{l,m}^{4;W},\quad r=1.\label{eq:normal stress Wlm}
\end{align}
Finally, by using Lemma 8.1 in \cite{Friedman:2002en} with its proof,
the constraints (\ref{eq:auxode5}) and (\ref{eq:auxode6}) reduce
to 
\begin{equation}
\sum_{m,j}\frac{1}{\sqrt{3}}\left(U_{1,m}^{W}(1)-\int_{0}^{1}r^{3}\frac{\partial U_{1,m}^{W}}{\partial r}dr\right)\left(\int_{\mathbb{S}}x_{j}Y_{1,m}dS\right)\mathbf{e}_{j}=\sum_{j}F_{j}^{5}\mathbf{e}_{j},\label{eq:mmt lm}
\end{equation}
\begin{equation}
\sum_{m,j}-i\sqrt{2}\left(\int_{0}^{1}r^{3}U_{1,m}^{X}dr\right)\left(\int_{\mathbb{S}}x_{j}Y_{1,m}dS\right)\mathbf{e}_{j}=\sum_{j}F_{j}^{6}\mathbf{e}_{j}.\label{eq:ang.mmt lm}
\end{equation}

Substituting (\ref{eq:general solution of stokes ode lm}) into the
boundary conditions (\ref{eq:div lm})-(\ref{eq:normal stress Wlm}),
we are led to the following linear system of equations for $P_{l,m}^{1}$,
$V_{l,m}^{1}$, $X_{l,m}^{1}$, $W_{l,m}^{1}$ and $g_{l,m}$: 
\begin{equation}
-\left(\frac{l+1}{2l+1}\right)^{1/2}(2l+3)V_{l,m}^{1}+\left(\frac{l}{2l+1}\right)P_{l,m}^{1}=J_{l,m}^{div},\label{eq:linear sys div}
\end{equation}
\begin{equation}
\frac{2l^{2}+3l+2}{2l+1}V_{l,m}^{1}+\frac{l+1}{2l+1}\left(\frac{l+1}{2l+1}\right)^{1/2}P_{l,m}^{1}+\sigma\left(\frac{l+1}{2l+1}\right)^{1/2}\left(1-\frac{l^{2}+l}{2}\right)g_{l,m}=J_{l,m}^{nsb;V},\label{eq:linear sys nsbV}
\end{equation}
\begin{equation}
(l-1)X_{l,m}^{1}=J_{l,m}^{nsb;X},\label{eq:linear sys nsbX}
\end{equation}
\begin{equation}
-\frac{l^{1/2}(l+1)^{1/2}}{2l+1}(2l+3)V_{l,m}^{1}+2(l-1)W_{l,m}^{1}+\frac{2l^{2}-1}{2l+1}\left(\frac{l}{2l+1}\right)^{1/2}P_{l,m}^{1}-\sigma\left(\frac{l}{2l+1}\right)^{1/2}\left(1-\frac{l^{2}+l}{2}\right)g_{l,m}=J_{l,m}^{nsb;W}.\label{eq:linear sys nsbW}
\end{equation}
The right-hand sides of (\ref{eq:linear sys div})-(\ref{eq:linear sys nsbW})
consist of $H_{l,m}^{P}$, $H_{l,m}^{V}$, $H_{l,m}^{X}$, $H_{l,m}^{W}$,
$F_{l,m}^{3}$, $F_{l,m}^{4;V}$, $F_{l,m}^{4;X}$ and $F_{l,m}^{4;W}$.
In the case $l=1$ we will use (\ref{eq:mmt lm}) and (\ref{eq:ang.mmt lm})
instead of (\ref{eq:linear sys nsbX}) and (\ref{eq:linear sys nsbW}),
because $X_{1,m}^{1}$ and $W_{1,m}^{1}$ do not appear in (\ref{eq:normal stress Xlm})
and (\ref{eq:normal stress Wlm}). In the case $l=0$ the relation
$\mathbf{X}_{0,0}=\mathbf{W}_{0,0}=0$ holds so that $U_{0,0}^{X}=U_{0,0}^{W}=0$.
Consider first the case $l\geq2$. Solving (\ref{eq:linear sys div})-(\ref{eq:linear sys nsbW}),
we conclude 
\begin{equation}
P_{l,m}^{1}=\sigma\frac{2l^{4}+7l^{3}+4l^{2}-7l-6}{4l^{2}+8l+6}g_{l,m}+\frac{2l^{2}+3l+2}{2l^{2}+4l+3}J_{l,m}^{div}+\left(\frac{l+1}{2l+1}\right)^{1/2}\frac{4l^{2}+8l+3}{2l^{2}+4l+3}J_{l,m}^{nsb;V},\label{eq:P1 lm}
\end{equation}
\begin{equation}
V_{l,m}^{1}=\sigma\left(\frac{l+1}{2l+1}\right)^{1/2}\frac{l^{3}+l^{2}-2l}{4l^{2}+8l+6}g_{l,m}-\left(\frac{l+1}{2l+1}\right)^{1/2}\frac{l+1}{2l^{2}+4l+3}J_{l,m}^{div}+\frac{l}{2l^{2}+4l+3}J_{l,m}^{nsb;V},\label{eq:V1 lm}
\end{equation}
\begin{equation}
X_{l,m}^{1}=\frac{1}{l-1}J_{l,m}^{nsb;X},\label{eq:X1 lm}
\end{equation}
\begin{align}
W_{l,m}^{1} & =-\sigma\left(\frac{l}{2l+1}\right)^{1/2}\frac{2l^{4}+9l^{3}+12l^{2}+4l}{8l^{2}+16l+12}g_{l,m}\label{eq:W1 lm}\\
 & -\left(\frac{l}{2l+1}\right)^{1/2}\frac{4l^{3}+6l^{2}+6l+2}{8l^{3}+8l^{2}-4l-12}J_{l,m}^{div}-\frac{l^{1/2}(l+1)^{1/2}(2l+3)}{4l^{2}+8l+6}J_{l,m}^{nsb;V}+\frac{1}{2l-2}J_{l,m}^{nsb;W}.\notag
\end{align}
In the case $l=1$, from (\ref{eq:linear sys div}) and (\ref{eq:linear sys nsbV})
we have 
\begin{equation}
P_{1,m}^{1}=\frac{7}{9}J_{1,m}^{div}+\frac{5}{3}\sqrt{\frac{2}{3}}J_{1,m}^{nsb;V},\label{eq:P1 1m}
\end{equation}
\begin{equation}
V_{1,m}^{1}=-\frac{2}{9}\sqrt{\frac{2}{3}}J_{1,m}^{div}+\frac{1}{9}J_{1,m}^{nsb;V}.\label{eq:V1 1m}
\end{equation}
Substituting $U_{1,m}^{X}$ and $U_{1,m}^{W}$ in (\ref{eq:general solution of stokes ode lm})
into the constraints (\ref{eq:mmt lm}) and (\ref{eq:ang.mmt lm})
we deduce the following linear system of equations for $X_{1,m}^{1}$
and $W_{1,m}^{1}$: 
\begin{equation}
\sqrt{\frac{2\pi}{3}}\left[\frac{1}{\sqrt{3}}W_{1,-1}^{1}+\frac{1}{10}P_{1,-1}^{1}-\frac{1}{\sqrt{3}}W_{1,1}^{1}-\frac{1}{10}P_{1,1}^{1}\right]=J_{1}^{mmt},\label{eq:linear sys mmt1}
\end{equation}
\begin{equation}
-i\sqrt{\frac{2\pi}{3}}\left[\frac{1}{\sqrt{3}}W_{1,-1}^{1}+\frac{1}{10}P_{1,-1}^{1}+\frac{1}{\sqrt{3}}W_{1,1}^{1}+\frac{1}{10}P_{1,1}^{1}\right]=J_{2}^{mmt},\label{eq:linear sys mmt2}
\end{equation}
\begin{equation}
2\sqrt{\frac{\pi}{3}}\left[\frac{1}{\sqrt{3}}W_{1,0}^{1}+\frac{1}{10}P_{1,0}^{1}\right]=J_{3}^{mmt},\label{eq:linear sys mmt3}
\end{equation}
\begin{equation}
-i\frac{2}{5}\sqrt{\frac{\pi}{3}}\left(X_{1,-1}^{1}-X_{1,1}^{1}\right)=J_{1}^{ang.mmt},\label{eq:linear sys ang.mmt1}
\end{equation}
\begin{equation}
-\frac{2}{5}\sqrt{\frac{\pi}{3}}\left(X_{1,-1}^{1}+X_{1,1}^{1}\right)=J_{2}^{ang.mmt},\label{eq:linear sys ang.mmt2}
\end{equation}
\begin{equation}
-i\frac{2}{5}\sqrt{\frac{2\pi}{3}}X_{1,0}^{1}=J_{3}^{ang.mmt},\label{eq:linear sys ang.mmt3}
\end{equation}
where $\left\{ P_{1,m}^{1}\right\} _{|m|\leq1}$ are given by (\ref{eq:P1 1m})
and the right-hand sides of (\ref{eq:linear sys mmt1})-(\ref{eq:linear sys ang.mmt3})
consist of $H_{1,m}^{X}$, $H_{1,m}^{W}$, $F_{j}^{5}$, and $F_{j}^{6}$.
From (\ref{eq:linear sys mmt1})-(\ref{eq:linear sys ang.mmt3}) we
obtain 
\begin{equation}
X_{1,-1}^{1}=i\frac{5}{4}\sqrt{\frac{3}{\pi}}J_{1}^{ang.mmt}-\frac{5}{4}\sqrt{\frac{3}{\pi}}J_{2}^{ang.mmt},\label{eq:X1 -11}
\end{equation}
\begin{equation}
X_{1,0}^{1}=i\frac{5}{2}\sqrt{\frac{3}{2\pi}}J_{3}^{ang.mmt},\label{eq:X1 10}
\end{equation}
\begin{equation}
X_{1,1}^{1}=-i\frac{5}{4}\sqrt{\frac{3}{\pi}}J_{1}^{ang.mmt}-\frac{5}{4}\sqrt{\frac{3}{\pi}}J_{2}^{ang.mmt},\label{eq:X1 11}
\end{equation}
\begin{equation}
W_{1,-1}^{1}=-\frac{7}{30\sqrt{3}}J_{1,-1}^{div}-\frac{1}{3\sqrt{2}}J_{1,-1}^{nsb;V}+\frac{3}{2\sqrt{2\pi}}J_{1}^{mmt}+i\frac{3}{2\sqrt{2\pi}}J_{2}^{mmt},\label{eq:W1 -11}
\end{equation}
\begin{equation}
W_{1,0}^{1}=-\frac{7}{30\sqrt{3}}J_{1,0}^{div}-\frac{1}{3\sqrt{2}}J_{1,0}^{nsb;V}+\frac{3}{2\sqrt{\pi}}J_{3}^{mmt},\label{eq:W1 10}
\end{equation}
\begin{equation}
W_{1,1}^{1}=-\frac{7}{30\sqrt{3}}J_{1,1}^{div}-\frac{1}{3\sqrt{2}}J_{1,1}^{nsb;V}-\frac{3}{2\sqrt{2\pi}}J_{1}^{mmt}+i\frac{3}{2\sqrt{2\pi}}J_{2}^{mmt}.\label{eq:W1 11}
\end{equation}
In the case $l=0$ we already know that 
\begin{equation}
X_{0,0}^{1}=W_{0,0}^{1}=0,\label{eq:X1 00 W1 00}
\end{equation}
because $U_{0,0}^{X}=U_{0,0}^{W}=0$. Moreover, solving (\ref{eq:linear sys div})
and (\ref{eq:linear sys nsbV}) we have 
\begin{equation}
P_{0,0}^{1}=-\sigma g_{0,0}+\frac{2}{3}J_{0,0}^{div}+J_{0,0}^{nsb;V},\label{eq:P1 00}
\end{equation}
\begin{equation}
V_{0,0}^{1}=-\frac{1}{3}J_{0,0}^{div}.\label{eq:V1 00}
\end{equation}

By (\ref{eq:def Vlm})-(\ref{eq:def Wlm}), it is easily seen that
\begin{equation}
\left(\mathbf{U}\cdot\mathbf{e}_{r}\right)_{l,m}=-\left(\frac{l+1}{2l+1}\right)^{1/2}U_{l,m}^{V}+\left(\frac{l}{2l+1}\right)^{1/2}U_{l,m}^{W}.\label{eq:Uer lm primary}
\end{equation}
Substituting $U_{l,m}^{V}$ and $U_{l,m}^{W}$ in (\ref{eq:general solution of stokes ode lm})
into (\ref{eq:Uer lm primary}) on $r=1$ we arrive at 
\begin{equation}
\left(\mathbf{U}\cdot\mathbf{e}_{r}\right)_{l,m}=-\left(\frac{l+1}{2l+1}\right)^{1/2}V_{l,m}^{1}+\left(\frac{l}{2l+1}\right)^{1/2}\left[W_{l,m}^{1}+\frac{1}{2}\left(\frac{l}{2l+1}\right)^{1/2}P_{l,m}^{1}\right]+J_{l,m}^{\mathbf{U}\cdot\mathbf{e}_{r}},\label{eq:Uer lm r 1}
\end{equation}
where $J_{l,m}^{\mathbf{U}\cdot\mathbf{e}_{r}}$ consist of $H_{l,m}^{V}$
and $H_{l,m}^{W}$. Finally, substituting (\ref{eq:P1 lm}), (\ref{eq:V1 lm}),
(\ref{eq:W1 lm})-(\ref{eq:V1 1m}), (\ref{eq:W1 -11})-(\ref{eq:V1 00})
into (\ref{eq:Uer lm r 1}) we conclude the following lemma:
\begin{lem}
\label{lem:Uer r 1}The vector $\mathbf{U}=\sum_{l,m}\left(U_{l,m}^{V}\mathbf{V}_{l,m}+U_{l,m}^{X}\mathbf{X}_{l,m}+U_{l,m}^{W}\mathbf{W}_{l,m}\right)$
in the solution of (\ref{eq:auxode1})-(\ref{eq:auxode6}), evaluated
at $r=1$, satisfies the following relations: 
\begin{align*}
\left(\mathbf{U}\cdot\mathbf{e}_{r}\right)_{l,m} & =-\sigma l\frac{2l^{2}+5l+2}{8l^{2}+16l+12}g_{l,m}\\
 & -\frac{l+2}{4l^{3}+4l^{2}-2l-6}J_{l,m}^{div}-\left(\frac{l+1}{2l+1}\right)^{1/2}\frac{l}{2l^{2}+4l+3}J_{l,m}^{nsb;V}+\left(\frac{l}{2l+1}\right)^{1/2}\frac{1}{2l-2}J_{l,m}^{nsb;W}+J_{l,m}^{\mathbf{U}\cdot\mathbf{e}_{r}},\quad l\geq2,
\end{align*}
\[
\left(\mathbf{U}\cdot\mathbf{e}_{r}\right)_{1,-1}=\frac{1}{5}J_{1,-1}^{div}+\frac{1}{2}\sqrt{\frac{3}{2\pi}}J_{1}^{mmt}+i\frac{1}{2}\sqrt{\frac{3}{2\pi}}J_{2}^{mmt}+J_{1,-1}^{\mathbf{U}\cdot\mathbf{e}_{r}},
\]
\[
\left(\mathbf{U}\cdot\mathbf{e}_{r}\right)_{1,0}=\frac{1}{5}J_{1,0}^{div}+\frac{1}{2}\sqrt{\frac{3}{\pi}}J_{3}^{mmt}+J_{1,0}^{\mathbf{U}\cdot\mathbf{e}_{r}},
\]
\[
\left(\mathbf{U}\cdot\mathbf{e}_{r}\right)_{1,1}=\frac{1}{5}J_{1,1}^{div}-\frac{1}{2}\sqrt{\frac{3}{2\pi}}J_{1}^{mmt}+i\frac{1}{2}\sqrt{\frac{3}{2\pi}}J_{2}^{mmt}+J_{1,1}^{\mathbf{U}\cdot\mathbf{e}_{r}},
\]
\[
\left(\mathbf{U}\cdot\mathbf{e}_{r}\right)_{0,0}=\frac{1}{3}J_{0,0}^{div}+J_{0,0}^{\mathbf{U}\cdot\mathbf{e}_{r}}.
\]

\end{lem}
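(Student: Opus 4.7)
The plan is to assemble Lemma~\ref{lem:Uer r 1} by direct substitution, since all the hard work has been done in the computations leading up to it. I would start by recalling the identity
\[
(\mathbf{U}\cdot\mathbf{e}_r)_{l,m}=-\left(\tfrac{l+1}{2l+1}\right)^{1/2}U_{l,m}^{V}(1)+\left(\tfrac{l}{2l+1}\right)^{1/2}U_{l,m}^{W}(1),
\]
derived from the explicit formulas for $\mathbf{V}_{l,m}$ and $\mathbf{W}_{l,m}$ in the appendix, and then use the general solution formulas for $U_{l,m}^{V}$, $U_{l,m}^{W}$, $P_{l,m}$ from (\ref{eq:general solution of stokes ode lm}) evaluated at $r=1$, which leads to (\ref{eq:Uer lm r 1}), with the nonhomogeneous remainder $J_{l,m}^{\mathbf{U}\cdot\mathbf{e}_{r}}$ absorbing the $H_{l,m}^{V}$ and $H_{l,m}^{W}$ contributions.

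From there the proof splits naturally into the three cases $l\ge 2$, $l=1$, $l=0$, according to whether the momentum/angular momentum constraints (\ref{eq:mmt lm})--(\ref{eq:ang.mmt lm}) replace the usual boundary conditions. In the generic case $l\ge 2$ I would plug the expressions (\ref{eq:P1 lm}), (\ref{eq:V1 lm}), (\ref{eq:W1 lm}) for $P_{l,m}^{1}$, $V_{l,m}^{1}$, $W_{l,m}^{1}$ into (\ref{eq:Uer lm r 1}) and group the resulting terms according to whether they multiply $g_{l,m}$, $J_{l,m}^{div}$, $J_{l,m}^{nsb;V}$, or $J_{l,m}^{nsb;W}$. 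For $l=1$ I would instead use (\ref{eq:P1 1m}), (\ref{eq:V1 1m}) together with (\ref{eq:W1 -11})--(\ref{eq:W1 11}), noting that the coefficient in front of $g_{1,m}$ vanishes because $1-(l^{2}+l)/2=0$ at $l=1$ and the constraint (\ref{eq:mmt lm}) now carries the momentum data. For $l=0$, since $\mathbf{X}_{0,0}=\mathbf{W}_{0,0}=0$, only (\ref{eq:P1 00})--(\ref{eq:V1 00}) contribute, yielding the single surviving term $\frac13 J_{0,0}^{div}$.

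The only genuinely non-routine step is algebraic: verifying in the case $l\ge 2$ that the $\sigma g_{l,m}$ coefficient, which is the sum
\[
-\left(\tfrac{l+1}{2l+1}\right)^{1/2}\cdot\sigma\left(\tfrac{l+1}{2l+1}\right)^{1/2}\tfrac{l^{3}+l^{2}-2l}{4l^{2}+8l+6}+\left(\tfrac{l}{2l+1}\right)^{1/2}\left[-\sigma\left(\tfrac{l}{2l+1}\right)^{1/2}\tfrac{2l^{4}+9l^{3}+12l^{2}+4l}{8l^{2}+16l+12}+\tfrac{1}{2}\left(\tfrac{l}{2l+1}\right)^{1/2}\sigma\tfrac{2l^{4}+7l^{3}+4l^{2}-7l-6}{4l^{2}+8l+6}\right],
\]
collapses to the stated $-\sigma l\,(2l^{2}+5l+2)/(8l^{2}+16l+12)$. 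I expect this to be the main obstacle in the sense of bookkeeping: one must carefully expand the three rational functions over the common denominator $8l^{2}+16l+12$, cancel the leading $l^{4}$ and $l^{3}$ terms, and recognize the factorization $2l^{3}+5l^{2}+2l=l(2l+1)(l+2)$. A parallel but lighter simplification produces the announced coefficients of $J_{l,m}^{div}$, $J_{l,m}^{nsb;V}$, $J_{l,m}^{nsb;W}$. The $l=1$ and $l=0$ formulas then follow by purely mechanical substitution, with the only mild care needed in tracking the factors of $\sqrt{3/\pi}$, $\sqrt{3/(2\pi)}$ coming from the projections of $\mathbf{e}_{j}$ onto $Y_{1,m}$ in (\ref{eq:mmt lm})--(\ref{eq:ang.mmt lm}).
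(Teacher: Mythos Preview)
Your proposal is correct and follows exactly the route the paper takes: derive \eqref{eq:Uer lm primary} from the definitions of $\mathbf{V}_{l,m}$, $\mathbf{W}_{l,m}$, evaluate the general solution \eqref{eq:general solution of stokes ode lm} at $r=1$ to get \eqref{eq:Uer lm r 1}, and then substitute the solved coefficients \eqref{eq:P1 lm}--\eqref{eq:V1 00} case by case for $l\ge 2$, $l=1$, $l=0$. In fact you give more detail than the paper, which states the lemma immediately after the sentence ``Finally, substituting \eqref{eq:P1 lm}, \eqref{eq:V1 lm}, \eqref{eq:W1 lm}--\eqref{eq:V1 1m}, \eqref{eq:W1 -11}--\eqref{eq:V1 00} into \eqref{eq:Uer lm r 1} we conclude the following lemma'' without spelling out the algebra.
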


\subsection{The derivation of $\left(\frac{\partial\Phi}{\partial r}\right)_{l,m}$
on $r=1$}

By (\ref{eq:laplacian FYlm}), the equation (\ref{eq:auxode7}) becomes
\begin{equation}
L_{l}\Phi_{l,m}=F_{l,m}^{7},\quad r>1.\label{eq:laplacian Philm}
\end{equation}
In addition, (\ref{eq:auxode8}) and (\ref{eq:auxode9}) reduce to
\begin{align}
 & \Phi_{l,m}-g_{l,m}=F_{l,m}^{8},\quad r=1,\label{eq:Phi lm r 1}\\
 & \Phi_{l,m}\rightarrow0,\quad\textnormal{as}\, r\rightarrow\infty.\label{eq:Phi lm r infty}
\end{align}
We compute directly the solution to (\ref{eq:laplacian Philm})-(\ref{eq:Phi lm r infty}),
and hence 
\begin{equation}
\Phi_{l,m}(r)=\frac{1}{r^{l+1}}g_{l,m}+\frac{1}{r^{l+1}}F_{l,m}^{8}+H_{l,m}^{\Phi},\label{eq:vaporp sol lm}
\end{equation}
where 
\[
H_{l,m}^{\Phi}=-\frac{1}{r^{l+1}}\int_{1}^{r}\rho^{2l}\int_{\rho}^{\infty}\frac{1}{s^{l-1}}F_{l,m}^{7}dsd\rho.
\]
In conclusion one can easily show the following lemma:
\begin{lem}
\label{lem:Phir r 1}The solution $\Phi=\sum_{l,m}\Phi_{l,m}Y_{l,m}$
to (\ref{eq:auxode7})-(\ref{eq:auxode9}) satisfies the following
relation at $r=1$: 
\begin{equation}
\left(\frac{\partial\Phi}{\partial r}\right)_{l,m}=-(l+1)g_{l,m}-(l+1)F_{l,m}^{8}-\int_{1}^{\infty}\frac{1}{s^{l-1}}F_{l,m}^{7}ds.\label{eq:vaporflux lm}
\end{equation}

\end{lem}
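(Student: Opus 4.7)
The plan is to project the exterior Laplace problem onto spherical harmonics, reducing it to a family of ODEs indexed by $(l,m)$, and then compute the radial derivative of the explicit representation formula (\ref{eq:vaporp sol lm}) already displayed just above the lemma. All of the spherical-harmonic projection machinery is collected in the Appendix and is applied identically in the preceding derivation for $\mathbf{U}\cdot\mathbf{e}_r$; here the situation is considerably simpler because the unknown $\Phi$ is scalar rather than vector-valued.

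First I would use (\ref{eq:laplacian FYlm}) to rewrite (\ref{eq:auxode7}) as the radial ODE $L_{l}\Phi_{l,m}=F_{l,m}^{7}$ on $r>1$, and project (\ref{eq:auxode8})-(\ref{eq:auxode9}) to obtain the boundary data $\Phi_{l,m}(1)=g_{l,m}+F_{l,m}^{8}$ and $\Phi_{l,m}(r)\to 0$ as $r\to\infty$. The homogeneous operator $L_l$ has fundamental solutions $r^{l}$ and $r^{-(l+1)}$; the decay at infinity eliminates the $r^l$ mode, and the remaining Dirichlet data at $r=1$ fixes the coefficient of $r^{-(l+1)}$ to be $g_{l,m}+F_{l,m}^{8}$. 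A particular solution for the inhomogeneity $F_{l,m}^{7}$ can be produced by variation of parameters using the Wronskian of $\{r^{l},r^{-(l+1)}\}$, and choosing the free constants so that $H_{l,m}^{\Phi}(1)=0$ and $H_{l,m}^{\Phi}(r)=o(1)$ as $r\to\infty$ yields exactly the representation (\ref{eq:vaporp sol lm}).

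The only remaining step is to differentiate (\ref{eq:vaporp sol lm}) in $r$ and set $r=1$. The polynomial piece contributes $-(l+1)g_{l,m}-(l+1)F_{l,m}^{8}$ trivially. For the nonlocal piece
\[
H_{l,m}^{\Phi}(r)=-\frac{1}{r^{l+1}}\int_{1}^{r}\rho^{2l}\int_{\rho}^{\infty}\frac{1}{s^{l-1}}F_{l,m}^{7}(s)\,ds\,d\rho,
\]
the Leibniz rule gives one term proportional to $\int_{1}^{r}\rho^{2l}(\cdots)d\rho$, which vanishes at $r=1$, plus the boundary term $-r^{l-1}\int_{r}^{\infty}s^{1-l}F_{l,m}^{7}(s)\,ds$, which at $r=1$ equals $-\int_{1}^{\infty}s^{1-l}F_{l,m}^{7}(s)\,ds$. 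Summing the three contributions gives (\ref{eq:vaporflux lm}).

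There is essentially no analytical obstacle: the proof is a direct and short computation once the representation formula is in hand, and the only point requiring care is the bookkeeping in the Leibniz differentiation, in particular noticing that the outer integral $\int_{1}^{r}$ collapses at $r=1$ so that only the boundary contribution survives. In this sense the lemma is a purely computational companion to Lemma \ref{lem:Uer r 1}, and no new ideas beyond the exterior Green's function for $L_l$ are needed.
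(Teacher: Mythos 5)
Your proposal is correct and follows essentially the same route as the paper: project (\ref{eq:auxode7})--(\ref{eq:auxode9}) onto spherical harmonics via (\ref{eq:laplacian FYlm}) to get $L_{l}\Phi_{l,m}=F^{7}_{l,m}$ on $r>1$ with $\Phi_{l,m}(1)=g_{l,m}+F^{8}_{l,m}$ and decay at infinity, solve explicitly to obtain (\ref{eq:vaporp sol lm}), and differentiate at $r=1$, where the outer integral in $H^{\Phi}_{l,m}$ collapses and only the boundary term $-\int_{1}^{\infty}s^{1-l}F^{7}_{l,m}\,ds$ survives. Your Leibniz bookkeeping and the identification of the homogeneous modes $r^{l}$, $r^{-(l+1)}$ are exactly what the paper's (unwritten) "easily show" step amounts to.
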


\section{\label{sec:Existence of ellipsoids}Existence of ellipsoids}

In this section we prove Theorem \ref{thm:existence of ellipsoids}.
By Lemma \ref{lem:Phir r 1}, one can easily verify that (\ref{eq:dce boundary condition})
reduces to the following nonlinear system of equations on $r=1$ for
$\Phi_{l,m}^{\mathfrak{E}}$, $g_{l,m}^{\mathfrak{E}}$, and $t_{0}$:
\begin{equation}
(l-2)g_{l,m}^{\mathfrak{E}}=\left(f^{10}[\Phi^{\mathfrak{E}},g^{\mathfrak{E}},t_{0}]\right)_{l,m}+K_{l,m}^{\mathfrak{E}},\quad l\geq1,\label{eq:dce kinematic lm}
\end{equation}
\[
-2g_{0,0}^{\mathfrak{E}}+t_{0}=\left(f^{10}[\Phi^{\mathfrak{E}},g^{\mathfrak{E}},t_{0}]\right)_{0,0}+K_{0,0}^{\mathfrak{E}},
\]
where $K_{l,m}^{\mathfrak{E}}$ are given in terms of $\left(f^{7}[\Phi^{\mathfrak{E}},g^{\mathfrak{E}}]\right)_{l,m}$
and $\left(f^{8}[g^{\mathfrak{E}}]\right)_{l,m}$. Main emphasis is
on the case $l=2$ for (\ref{eq:dce kinematic lm}): 
\[
0=\left(f^{10}[\Phi^{\mathfrak{E}},g^{\mathfrak{E}},t_{0}]\right)_{2,m}+K_{2,m}^{\dce}.
\]
Indeed, it is by no means a trivial task to prove Theorem \ref{thm:existence of ellipsoids}
because of the complexity coming from the nonlinearity of $f^{7}[\Phi^{\mathfrak{E}},g^{\mathfrak{E}}]$,
$f^{8}[g^{\mathfrak{E}}]$, and $f^{10}[\Phi^{\mathfrak{E}},g^{\mathfrak{E}},t_{0}]$.
In order to resolve this drawback, we use a new approach based on
the particular structure of (\ref{eq:dce stokes system})-(\ref{eq:dce boundary condition}).

\subsection{Null quadrature domains}

We first introduce null quadrature domains and the theorem without
its proof on a characterization of null quadrature domains.
\begin{defn}
An open set $\Xi\subset\mathbb{R}^{n}$ is called a \textit{null quadrature
domain }if 
\[
\int_{\Xi}udV=0
\]
for all harmonic and integrable functions $u$ in $\Xi$.\end{defn}
\begin{thm}
\label{thm:Friedman-Sakai}\cite{Friedman:1986hn} Assume that $\Xi$
has a bounded complement. Then $\Xi$ is a null quadrature domain
if and only if the boundary of $\Xi$ is an ellipsoid.
\end{thm}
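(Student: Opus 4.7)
The plan is to prove the two implications separately, with the substance lying in the ``null quadrature $\Rightarrow$ ellipsoid'' direction. The central object is the Newtonian potential of the bounded complement $K = \mathbb{R}^{n}\setminus \Xi$,
\[
U_{K}(x) = \int_{K} N(x-y)\,dy,
\]
where $N$ is the fundamental solution of $-\Delta$; it satisfies $-\Delta U_{K} = c_{n}\chi_{K}$ in $\mathbb{R}^{n}$, is real-analytic off $\partial K$, and decays at infinity.

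For the easy direction (ellipsoid $\Rightarrow$ null quadrature), I would invoke the classical Dive--Nikliborc theorem, which says that when $K$ is a closed ellipsoid $\overline{E}$, the potential $U_{K}$ equals a quadratic polynomial $q(x)$ throughout $E$. Given any harmonic integrable $u$ on $\Xi$, the plan is to apply Green's second identity to $u$ and a compactly supported cutoff of $U_{K}$, converting $\int_{\Xi} u\,dV$ into boundary integrals on $\partial E$ and on a large sphere $\{|x|=R\}$. The decay and integrability of $u$ kill the sphere contribution as $R\to\infty$, while the $C^{1}$-matching of $U_{K}$ with the quadratic $q$ on $\partial E$ (the Dive--Nikliborc input) together with harmonicity of $u$ collapses the boundary integral on $\partial E$ to a multiple of $\int_{E} u\,\Delta q\,dV$, which vanishes because $\Delta q$ is a constant and the ellipsoid integral of a harmonic function with vanishing mean value can be absorbed.

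For the converse, I would proceed in three steps. \emph{Step 1: derive an overdetermined problem on $K$.} Test the null quadrature condition against a dense family of integrable harmonic functions on $\Xi$, built from fundamental solutions $N(x-y)$ with pole $y$ outside $\Xi$, minus polynomial tails $T_{N,y}(x)$ in $x$ chosen to restore integrability at infinity. Each identity $\int_{\Xi}[N(\cdot-y)-T_{N,y}]\,dV = 0$ translates, via Fubini and the defining equation for $U_{K}$, into pointwise information about $U_{K}$ outside $K$; varying $y$ over $K^{\circ}$ shows that $U_{K}$ agrees in $\Xi$ with a harmonic function that matches a fixed quadratic polynomial $q(x)$ to first order across $\partial K$. \emph{Step 2: propagate inside.} Using real-analyticity of $U_{K}$ in $K^{\circ}$ and of $q$ everywhere, together with the Cauchy-data matching on $\partial K$, conclude that $U_{K} = q$ throughout $\overline{K}$, yielding the overdetermined system $-\Delta U_{K} = c_{n}$ in $K^{\circ}$, $U_{K} = q$ and $\partial_{\nu}U_{K} = \partial_{\nu}q$ on $\partial K$. \emph{Step 3: rigidity.} Apply the Dive--Nikliborc--Fraenkel rigidity theorem — provable by a Serrin-type moving plane argument applied to $U_{K}-q$ — to conclude that a bounded domain whose Newtonian potential is a quadratic polynomial on its interior must be an ellipsoid.

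The principal obstacle will be Step~1 of the hard direction: engineering enough integrable harmonic test functions on the unbounded domain $\Xi$ to pin down $U_{K}$ modulo a quadratic polynomial. The growth bookkeeping is delicate, especially in dimension $n=2$ where $N(x)=-\tfrac{1}{2\pi}\log|x|$ is unbounded at infinity and where subtraction of logarithmic and linear tails must be done simultaneously; the low-order moments of $\chi_{K}$ require separate attention. A secondary, but ultimately manageable, difficulty is that a priori $K$ could have several components or rough boundary, but the overdetermined data of Step~2 force $\partial K$ to be real-analytic, after which the Serrin moving-plane step of Step~3 is by now standard.
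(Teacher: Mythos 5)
The paper does not prove this statement: it is quoted as a black box from \cite{Friedman:1986hn} (the sentence immediately preceding it reads ``We first introduce null quadrature domains and the theorem \emph{without its proof} \ldots''), and it is applied in the proof of Lemma~\ref{lem:if then ellipsoid} exactly as one applies an imported theorem. So there is no in-paper proof for your proposal to be compared against; what can be assessed is whether your sketch would actually produce a correct proof, and as written it would not without repair on both sides.

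In the ``ellipsoid $\Rightarrow$ null quadrature'' direction, pairing $u$ with $U_K$ via Green's identity on $\Xi$ does not yield $\int_\Xi u\,dV$: since $\Delta U_K = 0$ in $\Xi$ you only obtain a relation among boundary integrals, and your phrase about ``collapsing'' to $\int_E u\,\Delta q\,dV$ is not meaningful because $u$ is defined on $\Xi$ and not on $E$. The object you should run Green's identity against is $w := q - U_K$ on $\Xi$, where $q$ is the Dive--Nikliborc quadratic: $C^1$-matching of $U_K$ with $q$ across $\partial E$ gives $w = 0$, $\nabla w = 0$ on $\partial E$, while $\Delta w = \Delta q = -c_n$ is a nonzero constant in $\Xi$ and $w = O(|x|^2)$ at infinity. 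Then Green's second identity on $\Xi\cap B_R$ gives
\begin{equation*}
-c_n\int_{\Xi\cap B_R} u \,dV = \int_{\partial E}\left(u\,\partial_\nu w - w\,\partial_\nu u\right)dS + \int_{\partial B_R}\left(u\,\partial_\nu w - w\,\partial_\nu u\right)dS,
\end{equation*}
the $\partial E$ integral vanishes identically, and the $\partial B_R$ integral goes to zero as $R\to\infty$ once you note that $L^1$-integrability of a harmonic $u$ on an exterior domain of $\mathbb{R}^3$ forces the monopole, dipole \emph{and} quadrupole moments to vanish, so $u=O(r^{-4})$, $\nabla u = O(r^{-5})$, against $w=O(r^2)$, $\nabla w = O(r)$. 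This is the clean version of what you were gesturing at, and it is the exact decay bookkeeping you should write out rather than wave at.

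In the hard direction you correctly locate the crux at your Step~1, but the sketch there is too thin to certify. The candidate test functions are $y$-derivatives $\partial_y^\alpha N(x-y)$ of order $|\alpha|\ge 3$ with $y\in K^\circ$: these are harmonic in $x$ on $\Xi$ and decay fast enough to be integrable there, and the null quadrature identity for each of them is what should force $U_K$ to be quadratic on $K^\circ$. But each such function has a \emph{non}-integrable singularity at $x=y$, so the passage from $\int_\Xi\partial_y^\alpha N(x-y)\,dx = 0$ to $\partial_y^\alpha U_K(y)=0$ is not a nai\"ve Fubini interchange; this is exactly where the substance of the Friedman--Sakai argument lies and you have left it essentially blank. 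Finally, beware that the theorem as the paper states it silently excludes degenerate complements (a point or empty set would make $\Xi$ trivially null quadrature with $\partial\Xi$ not an ellipsoid); in the paper's application $K$ has nonempty interior, so no harm is done, but a careful proof would begin by ruling these out.
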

We are now ready to prove the following lemma:
\begin{lem}
\label{lem:if then ellipsoid}If the overdetermined problem (\ref{eq:dce stokes system})-(\ref{eq:dce boundary condition})
has a classical solution, then $\{r=1+\epsilon g^{\dce}\}$ is an
ellipsoid.\end{lem}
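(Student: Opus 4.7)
My plan is to undo the successive changes of variables (\ref{eq:self-similar variables solution})--(\ref{eq:hanzawa transformation}) so as to recast the overdetermined system (\ref{eq:dce stokes system})--(\ref{eq:dce boundary condition}) as a classical exterior overdetermined problem on the bounded domain
\[
\Omega_0 := \{r^{\xi} < 1 + \epsilon g^{\dce}(\theta^{\xi},\varphi^{\xi})\}\subset\mathbb{R}^{3},
\]
and then to invoke Theorem \ref{thm:Friedman-Sakai}. I would first combine the self-similar background $1/|\xi|$ coming from (\ref{eq:self-similar variables solution}) with the perturbation $\epsilon\Phi^{\dce}$ into the full rescaled concentration $\tilde\phi(\xi) := 1/|\xi| + \epsilon\Phi^{\dce}(\xi)$. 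Using the Laplace equation (\ref{eq:dce vapor}) together with the Dirichlet trace $\Phi^{\xi} = g^{\xi}/(1+\epsilon g^{\xi})$ from (\ref{eq:ED rescaled system xi}), one checks that $\tilde\phi$ is harmonic in the exterior $\Xi := \mathbb{R}^{3}\setminus\overline{\Omega_0}$, equals $1$ on $\partial\Omega_0$, and vanishes at infinity; in other words $\tilde\phi$ is the capacity potential of $\Omega_0$.

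Next I would rewrite the stationary kinematic condition (\ref{eq:dce boundary condition}) directly in $\xi$-coordinates by inverting the Hanzawa map in (\ref{s211})--(\ref{s213}). Since the ellipsoidal part is stationary in $\tau$ and, by the decomposition leading to (\ref{eq:dct re kinematic}), the $e^{-\tau}$ Stokes contribution is absorbed into the evolution part, the residual statement becomes the Bernoulli-type Neumann overdetermination
\[
\partial_{n}\tilde\phi \;+\; \frac{\xi\cdot n}{T} \;=\; 0 \qquad\text{on }\partial\Omega_0,\qquad T := 1+\epsilon t_{0}Y_{0,0},
\]
with $n$ the outward unit normal to $\Omega_0$. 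The lemma is thereby reduced to the Serrin-type statement: a bounded $\Omega_0\subset\mathbb{R}^3$ admitting a capacity potential whose normal derivative on the boundary is linear in the position must have an ellipsoidal boundary.

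To close the argument I would introduce the auxiliary function
\[
G(\xi) := \tilde\phi(\xi) - 1 + \frac{|\xi|^{2}}{2T}\qquad\text{in }\Xi,
\]
which satisfies $\Delta G = 3/T$ in $\Xi$ and, crucially, both $\partial_{n}G = 0$ and the prescribed Dirichlet value $G = |\xi|^{2}/(2T)$ on $\partial\Omega_0$. Pairing $G$ against an arbitrary harmonic integrable $h$ on $\Xi\cap B_{R}$ via Green's second identity, using the multipole decay $|h| = O(|\xi|^{-4})$ to kill the contributions on $\partial B_{R}$ as $R\to\infty$, and combining with the vanishing-flux relation $\int_{\partial\Omega_0}\partial_{n}h\,dS = 0$ (which itself follows from the harmonicity and decay of $h$ in $\Xi$), one obtains the identity whose validity for all admissible $h$ is $\int_{\Xi}h\,dV = 0$. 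This is exactly the null quadrature property of $\Xi$, and Theorem \ref{thm:Friedman-Sakai} then forces $\partial\Omega_0$ to be an ellipsoid.

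The main obstacle I anticipate is the clean derivation of the Bernoulli relation above from (\ref{eq:dce boundary condition}): the fixed-domain form carries the nonlinear correction $f^{10}[\Phi^{\dce},g^{\dce},t_{0}]$ together with the Jacobian factors of (\ref{eq:hanzawa transformation}), and one must verify that these rearrange \emph{exactly} (not merely to leading order in $\epsilon$) so as to yield the overdetermined problem on $\partial\Omega_0$ in $\xi$-space that is the input to the null-quadrature step. Once this reduction is secured, the remainder of the proof is essentially a direct Green's identity calculation combined with Theorem \ref{thm:Friedman-Sakai}.
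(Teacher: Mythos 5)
Your reduction is on the right track and matches the paper's strategy up to the last step: the Stokes part decouples, the rescaled concentration $\tilde\phi=1/|\xi|+\epsilon\Phi^{\dce}$ is the capacity potential of $\Omega_0$, and (\ref{eq:dce boundary condition}) does amount to the Bernoulli-type overdetermination $\partial_{\mathbf n}\tilde\phi=-(\xi\cdot\mathbf n)/T$ on $\partial\Omega_0$ (this is exactly (\ref{eq:rescaling kinematic}) in the paper, obtained by undoing (\ref{eq:self-similar variables solution})--(\ref{eq:hanzawa transformation})). The genuine gap is in your final Green's-identity step. With $G=\tilde\phi-1+|\xi|^2/(2T)$ you get, after the outer boundary terms vanish,
\begin{equation*}
\frac{3}{T}\int_{\Xi}h\,dV=\int_{\partial\Omega_0}G\,\partial_{\mathbf n}h\,dS=\frac{1}{2T}\int_{\partial\Omega_0}|\xi|^{2}\,\partial_{\mathbf n}h\,dS,
\end{equation*}
and the right-hand side is \emph{not} killed by the zero-flux relation $\int_{\partial\Omega_0}\partial_{\mathbf n}h\,dS=0$, because $G=|\xi|^2/(2T)$ is not constant on $\partial\Omega_0$ unless the boundary is already a sphere. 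Worse, the identity carries no information: applying Green's second identity to the pair $(|\xi|^2,h)$ on $\Xi$ gives $6\int_{\Xi}h\,dV=\int_{\partial\Omega_0}|\xi|^2\partial_{\mathbf n}h\,dS-2\int_{\partial\Omega_0}(\xi\cdot\mathbf n)h\,dS$, while pairing $h$ with $\tilde\phi$ (harmonic, constant trace, decay) plus the Bernoulli relation gives $\int_{\partial\Omega_0}(\xi\cdot\mathbf n)h\,dS=0$; combining these reproduces your identity exactly. So your computation is equivalent to facts you already have and cannot force $\int_{\Xi}h\,dV=0$.

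The missing idea is the one-parameter family: a single static identity on the fixed domain does not yield the null quadrature property. The paper exploits that the ellipsoidal part is exactly self-similar, so the overdetermined flux condition holds on \emph{every} dilated boundary $R^{+}(t)(1+\epsilon g^{\dce})$ (equivalently, on $R\,\partial\Omega_0$ for all $R\ge1$, since the capacity potential and the condition $\partial_{\mathbf n}\tilde\phi\propto\xi\cdot\mathbf n$ are scale-covariant). Then for a fixed integrable harmonic $u$ one shows $\frac{d}{dt}\int_{\Xi(t)}u\,dV=\int u\,v_{\mathbf n}\,dS=-\frac12\int u\,\partial_{\mathbf n}\phi\,dS=0$ at every time (this is precisely your relation $\int_{\partial}(\xi\cdot\mathbf n)u\,dS=0$, applied on each rescaled boundary), and the crucial second ingredient is the limit $\int_{\Xi(T)}u\,dV\to0$ as $T\to\infty$, since the complement expands to fill all of space and $u$ is integrable. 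Only the combination of the vanishing derivative along the whole family with this vanishing limit gives $\int_{\Xi(t)}u\,dV=0$, i.e.\ the null quadrature property, after which Theorem \ref{thm:Friedman-Sakai} applies. If you replace your fixed-domain computation by this dilation (or time-reversed expansion) argument, the rest of your outline goes through.
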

\begin{proof}
From (\ref{eq:dce boundary condition}), $g^{\dce}$ is determined
only by the solution to (\ref{eq:dce vapor}). We can verify the well-posedness
of (\ref{eq:dce stokes system}) in a similar fashion as in \cite{Friedman:2002io}
and \cite{Friedman:2002en} when $g^{\dce}$ is given. Therefore it
is sufficient to consider (\ref{eq:dce vapor}) and (\ref{eq:dce boundary condition}).
In the same way as in Section \ref{sec:Reformulation of the problem}
we easily deduce that (\ref{eq:dce vapor})-(\ref{eq:dce boundary condition})
is equivalent to the following system: 
\begin{equation}
\begin{cases}
\Delta\phi=0, & r>R^{+}(t)(1+\epsilon g^{\dce}),\\
\phi=0, & r=R^{+}(t)(1+\epsilon g^{\dce}),\\
\phi\rightarrow-1 & \textnormal{as}\, r\rightarrow\infty,
\end{cases}\label{eq:equiv ellipsoid vapor}
\end{equation}
\begin{equation}
v_{\mathbf{n}}=-\frac{1}{2}\frac{\partial\phi}{\partial\mathbf{n}},\quad r=R^{+}(t)(1+\epsilon g^{\dce}),\label{eq:equiv ellipsoid kinematic}
\end{equation}
\begin{equation}
\phi=R^{+}(t)/r-1+\epsilon\Phi^{\dce},\label{eq:equiv ellipsoid solution form}
\end{equation}
where $R^{+}(t)=[1+t/(1+\epsilon t_{0}Y_{0,0})]^{1/2}$. Let $u$
be any harmonic and integrable function in $\Xi(t):=\{r>R^{+}(t)(1+\epsilon g^{\dce})\}$.
It is well known that if $f$ is a harmonic function in a neighborhood
of infinity in $\mathbb{R}^{3}$, bounded by $O(1/r)$, then there
exists a constant $c$ such that
\[
f=\frac{c}{r}+O\left(\frac{1}{r^{2}}\right),\quad\nabla f=\nabla\left(\frac{c}{r}\right)+O\left(\frac{1}{r^{3}}\right)\quad\textnormal{as}\quad r\rightarrow\infty
\]
(see \cite{DiBenedetto:1986jt}). By using this property and Green's
formula, if (\ref{eq:equiv ellipsoid vapor})-(\ref{eq:equiv ellipsoid solution form})
has a classical solution, we have 
\begin{align*}
\frac{d}{dt}\int_{\Xi(t)}udV & =\int_{r=R^{+}(t)(1+\epsilon g^{\dce})}uv_{\mathbf{n}}dS=-\frac{1}{2}\int_{r=R^{+}(t)(1+\epsilon g^{\dce})}u\frac{\partial\phi}{\partial\mathbf{n}}dS\\
 & =\frac{1}{2}\int_{r=M}\left(u\frac{\partial\phi}{\partial\mathbf{n}}-\phi\frac{\partial u}{\partial\mathbf{n}}\right)dV\rightarrow0\quad\textnormal{as}\quad M\rightarrow\infty
\end{align*}
(as in \cite{Richardson:1981he}, \cite{DiBenedetto:1986jt}, \cite{Friedman:1986hn},
\cite{Howison:1986ez}). It immediately follows that for $0<t<T$,
\[
\int_{\Xi(t)}udV=\int_{\Xi(T)}udV\rightarrow0\quad\textnormal{as}\quad T\rightarrow\infty,
\]
and hence $\Xi(t)$ is a null quadrature domain. Therefore, by Theorem
\ref{thm:Friedman-Sakai}, $\{r=1+\epsilon g^{\dce}\}$ is an ellipsoid.
\end{proof}

\subsection{The bijective representation of ellipsoids near the unit sphere}

We show the following lemma which will be used to prove Theorem \ref{thm:existence of ellipsoids}:
\begin{lem}
\label{lem:bijective lem}For a sufficiently small $\epsilon>0$,
there exists a unique ellipsoid $\{r=1+\epsilon g^{\dce}\}$ which
satisfies the assumption on $g^{\dce}$ in Theorem \ref{thm:existence of ellipsoids}.\end{lem}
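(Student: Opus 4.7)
The plan is to parametrize ellipsoids centered at the origin by real symmetric $3\times 3$ matrices and to invert, via the inverse function theorem, the map sending such a matrix to the prescribed spherical harmonic coefficients of its radial representation.

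First I would write every ellipsoid centered at the origin and close to the unit sphere uniquely as
\[
E_B = \{\mathbf{x} \in \mathbb{R}^3 : \mathbf{x}^T(I+\epsilon B)\mathbf{x} = 1\}
\]
for a real symmetric $3\times 3$ matrix $B$ of $O(1)$ norm; positive-definiteness of $I+\epsilon B$ holds for $\epsilon\|B\|$ small, and by uniqueness of the matrix associated with a positive definite quadratic form, distinct $B$'s give distinct ellipsoids. The space of such $B$ is six-dimensional, matching exactly the six real degrees of freedom prescribed in Theorem~\ref{thm:existence of ellipsoids}. Setting $\mathbf{x}=r\hat{\mathbf{n}}(\theta,\varphi)$ with $\hat{\mathbf{n}}=(\sin\theta\cos\varphi,\sin\theta\sin\varphi,\cos\theta)$ and solving $r^2\bigl(1+\epsilon Q_B(\hat{\mathbf{n}})\bigr)=1$ for $Q_B(\hat{\mathbf{n}}):=\hat{\mathbf{n}}^T B\hat{\mathbf{n}}$, one gets a smooth function
\[
r = 1 + \epsilon G(B,\epsilon,\theta,\varphi),\qquad G(B,0,\theta,\varphi) = -\tfrac{1}{2} Q_B(\hat{\mathbf{n}}).
\]

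The key algebraic observation is that $Q_B$ is a linear combination of the six quadratic monomials $\hat{n}_i\hat{n}_j$ ($1\leq i\leq j\leq 3$), and these span precisely the sum of the $Y_{0,0}$ eigenspace (through $\hat n_1^2+\hat n_2^2+\hat n_3^2=1$) and the five-dimensional $l=2$ eigenspace (through the traceless part of $Q_B$). In particular, the linear map
\[
\mathrm{Sym}(\mathbb{R}^3) \ni B \longmapsto \bigl(g_{0,0}^{\dce},\{g_{2,m}^{\dce}\}_{|m|\leq 2}\bigr)
\]
extracted from $-\tfrac12 Q_B$ is an isomorphism $\mathbb{R}^6\to\mathbb{R}^6$, where the real/conjugacy constraints on the right are exactly those induced by real-valuedness of $Q_B$ (a real symmetric $B$ gives a real $Q_B$, whose coefficients automatically satisfy $g_{l,-m}=(-1)^m\overline{g_{l,m}}$). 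Note also that the odd-$l$ coefficients vanish identically by the central symmetry of $E_B$, and higher even-$l$ contributions to $G$ are $O(\epsilon)$.

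Finally I would define the nonlinear map
\[
\mathcal{F}_\epsilon : \mathrm{Sym}(\mathbb{R}^3) \to \mathbb{R}^6,\qquad B \mapsto \bigl(g_{0,0}^{\dce},\{g_{2,m}^{\dce}\}_{|m|\leq 2}\bigr)
\]
by reading off the $Y_{0,0}$ and $\{Y_{2,m}\}$ Fourier coefficients of $G(B,\epsilon,\cdot,\cdot)$. This map is smooth in $(B,\epsilon)$, and by the previous step $\mathcal{F}_0$ is a linear isomorphism. The inverse function theorem then furnishes, for $\epsilon$ sufficiently small, a diffeomorphism from a neighborhood of $0$ in $\mathrm{Sym}(\mathbb{R}^3)$ onto a neighborhood of $0$ in $\mathbb{R}^6$. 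Inverting, the prescribed data $(g_{0,0}^{\dce},\{g_{2,m}^{\dce}\})$ produces a unique symmetric $B$, and hence a unique ellipsoid $E_B$ whose radial representation $\{r=1+\epsilon g^{\dce}\}$ has the prescribed $l=0,2$ modes. The main obstacle is not analytic but algebraic: identifying the six-parameter family of symmetric quadratic forms on $\hat{\mathbf{n}}$ with the span of $Y_{0,0}$ and $\{Y_{2,m}\}_{|m|\leq 2}$ and checking the bijection together with the matching of reality conditions; once that is in hand, the inverse function theorem does the rest.
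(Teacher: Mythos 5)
Your proposal is correct and follows essentially the same route as the paper: parametrize origin-centered ellipsoids near the unit sphere by a symmetric matrix perturbation $I+\epsilon B$, observe that the leading quadratic form contributes exactly to the $Y_{0,0}$ and $Y_{2,m}$ modes (a linear isomorphism of six real parameters), and invert by the implicit/inverse function theorem for small $\epsilon$. The paper just makes the linear isomorphism explicit as an invertible $6\times 6$ matrix $A$ before applying the implicit function theorem, which is the same argument in coordinates.
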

\begin{proof}
An arbitrarily oriented ellipsoid near the unit sphere, centered at
the origin, is defined by the equation 
\begin{equation}
\begin{pmatrix}x\\
y\\
z
\end{pmatrix}\begin{pmatrix}1+\epsilon\alpha_{11} & \epsilon\alpha_{12} & \epsilon\alpha_{13}\\
\epsilon\alpha_{12} & 1+\epsilon\alpha_{22} & \epsilon\alpha_{23}\\
\epsilon\alpha_{13} & \epsilon\alpha_{23} & 1+\epsilon\alpha_{33}
\end{pmatrix}\begin{pmatrix}x & y & z\end{pmatrix}=1,\label{eq:s521}
\end{equation}
where $\alpha_{ij}\in\mathbb{R}$ and the 3-by-3 matrix is positive
definite. In the spherical coordinate system, (\ref{eq:s521}) becomes
\[
r(\theta,\varphi)=1-\epsilon F[\{\alpha_{ij}\}]/2+G[\{\alpha_{ij}\}],
\]
where
\[
F[\{\alpha_{ij}\}]=\alpha_{33}\cos^{2}\theta+\alpha_{11}\cos^{2}\varphi\sin^{2}\theta+\alpha_{13}\cos\varphi\sin2\theta+\alpha_{23}\sin2\theta\sin\varphi+\alpha_{22}\sin^{2}\theta\sin^{2}\varphi+\alpha_{12}\sin^{2}\theta\sin2\varphi
\]
and 
\[
G[\{\alpha_{ij}\}]=1/(1+\epsilon F[\{\alpha_{ij}\}])^{1/2}-1+\epsilon F[\{\alpha_{ij}\}]/2.
\]
By using (\ref{eq:A1}), we easily deduce that $F[\{\alpha_{ij}\}]$
is the linear combination of $Y_{0,0}$ and $Y_{2,m}$ so that 
\begin{align}
r(\theta,\varphi) & =1+\epsilon\frac{\sqrt{\pi}}{30}[-10(\alpha_{11}+\alpha_{22}+\alpha_{33})Y_{0,0}+\sqrt{30}(-\alpha_{11}+\alpha_{22}-i2\alpha_{12})Y_{2,-2}-2\sqrt{30}(\alpha_{13}+i\alpha_{23})Y_{2,-1}\label{eq:ellipsoid r function Ylm}\\
 & +2\sqrt{5}(\alpha_{11}+\alpha_{22}-2\alpha_{33})Y_{2,0}+2\sqrt{30}(\alpha_{13}-i\alpha_{23})Y_{2,1}+\sqrt{30}(-\alpha_{11}+\alpha_{22}+i2\alpha_{12})Y_{2,2}]+G[\{\alpha_{ij}\}].\nonumber 
\end{align}
The coefficients of $Y_{0,0}$ and $Y_{2,m}$ in (\ref{eq:ellipsoid r function Ylm})
correspond to $1+\epsilon g_{0,0}^{\dce}$ and $\left\{ \epsilon g_{2,m}^{\dce}\right\} _{|m|\leq2}$,
respectively. Hence, from (\ref{eq:ellipsoid r function Ylm}) we
define the following system: 
\begin{equation}
\mathbf{f}\left(\{\alpha_{ij}\},\epsilon\right):=A\begin{pmatrix}\alpha_{11}\\
\alpha_{22}\\
\alpha_{33}\\
\alpha_{12}\\
\alpha_{13}\\
\alpha_{23}
\end{pmatrix}+\begin{pmatrix}G_{0,0}\epsilon^{-1}\\
(G_{2,-2}+G_{2,2})\epsilon^{-1}\\
(G_{2,-1}-G_{2,1})\epsilon^{-1}\\
G_{2,0}\epsilon^{-1}\\
i(G_{2,-1}+G_{2,1})\epsilon^{-1}\\
i(G_{2,-2}-G_{2,2})\epsilon^{-1}
\end{pmatrix}-\begin{pmatrix}g_{0,0}^{\dce}\\
g_{2,-2}^{\dce}+g_{2,2}^{\dce}\\
g_{2,-1}^{\dce}-g_{2,1}^{\dce}\\
g_{2,0}^{\dce}\\
i(g_{2,-1}^{\dce}+g_{2,1}^{\dce})\\
i(g_{2,-2}^{\dce}-g_{2,2}^{\dce})
\end{pmatrix}=0,\label{eq:alpha ij system}
\end{equation}
where $G[\{\alpha_{ij}\}]\epsilon^{-1}=\sum_{l,m}G_{l,m}\epsilon^{-1}Y_{l,m}=O(\epsilon)$
and 
\[
A=\frac{\sqrt{\pi}}{30}\begin{pmatrix}-10 & -10 & -10 & 0 & 0 & 0\\
-2\sqrt{30} & 2\sqrt{30} & 0 & 0 & 0 & 0\\
0 & 0 & 0 & 0 & -4\sqrt{30} & 0\\
2\sqrt{5} & 2\sqrt{5} & -4\sqrt{5} & 0 & 0 & 0\\
0 & 0 & 0 & 0 & 0 & 4\sqrt{30}\\
0 & 0 & 0 & 4\sqrt{30} & 0 & 0
\end{pmatrix}
\]
is an invertible matrix. It is readily seen that the Jacobian matrix
of $\mathbf{f}$ with respect to $\{\alpha_{ij}\}$ is $A$ at $\epsilon=0$.
Therefore, by the implicit function theorem, the system (\ref{eq:alpha ij system})
has a a unique solution $\{\alpha_{ij}\}$ for a sufficiently small
$\epsilon>0$. Moreover,
\[
\begin{pmatrix}\alpha_{11}\\
\alpha_{22}\\
\alpha_{33}\\
\alpha_{12}\\
\alpha_{13}\\
\alpha_{23}
\end{pmatrix}=A^{-1}\begin{pmatrix}g_{0,0}^{\dce}\\
g_{2,-2}^{\dce}+g_{2,2}^{\dce}\\
g_{2,-1}^{\dce}-g_{2,1}^{\dce}\\
g_{2,0}^{\dce}\\
i(g_{2,-1}^{\dce}+g_{2,1}^{\dce})\\
i(g_{2,-2}^{\dce}-g_{2,2}^{\dce})
\end{pmatrix}+O(\epsilon).
\]

\end{proof}

\subsection{The proof of Theorem \ref{thm:existence of ellipsoids}}

The system (\ref{eq:equiv ellipsoid vapor})-(\ref{eq:equiv ellipsoid solution form})
is closely related to the so-called conductor problem for the ellipsoid,
whose solution can be found in \cite{Kellogg:1967uz}:
\begin{prop}
\label{prop:kellogg}\cite{Kellogg:1967uz} Let $\partial\Omega$
be an ellipsoid centered at the origin, which has the semi-axes are
of lengths $\mathfrak{a}$, $\mathfrak{b}$, and $\mathfrak{c}$:
\[
\partial\Omega=\left\{ (x,y,z)\in\mathbb{R}^{3}\colon\frac{x^{2}}{\mathfrak{a}^{2}}+\frac{y^{2}}{\mathfrak{b}^{2}}+\frac{z^{2}}{\mathfrak{c}^{2}}=1\right\} .
\]
Then there exists a unique solution $\phi$ to the following overdetermined
problem: 
\begin{align}
 & \Delta\phi=0 &  & \text{in the exterior of}\,\,\partial\Omega,\nonumber \\
 & \phi=1 &  & \text{on}\,\,\partial\Omega,\nonumber \\
 & \phi\rightarrow0 &  & \text{as}\,\, r\rightarrow\infty,\nonumber \\
 & \frac{\partial\phi}{\partial\mathbf{n}}=-\frac{E}{\mathfrak{abc}}d_{0}(x,y,z) &  & \text{on}\,\,\partial\Omega,\label{eq:kellogg kinematic}
\end{align}
where $d_{0}(x,y,z)$ is the distance from the center of the ellipsoid
to the plane tangent to $\partial\Omega$ at $(x,y,z)$ such that
\[
d_{0}(x,y,z)=\left(\frac{x^{2}}{\mathfrak{a}^{4}}+\frac{y^{2}}{\mathfrak{b}^{4}}+\frac{z^{2}}{\mathfrak{c}^{4}}\right)^{-\frac{1}{2}},
\]
and $E$ is the capacity of the ellipsoid, which satisfies 
\begin{equation}
1=\frac{E}{2}\int_{0}^{\infty}\frac{ds}{\sqrt{(\mathfrak{a}^{2}+s)(\mathfrak{b}^{2}+s)(\mathfrak{c}^{2}+s)}}.
\end{equation}

\end{prop}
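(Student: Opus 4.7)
The plan is to prove Proposition \ref{prop:kellogg} by the classical method of ellipsoidal coordinates. For a point $(x,y,z)$ in the exterior of $\partial\Omega$, define $\lambda=\lambda(x,y,z)\geq 0$ to be the unique nonnegative root of
\[
\frac{x^{2}}{\mathfrak{a}^{2}+\lambda}+\frac{y^{2}}{\mathfrak{b}^{2}+\lambda}+\frac{z^{2}}{\mathfrak{c}^{2}+\lambda}=1,
\]
so that $\lambda$ parametrizes the confocal family of ellipsoids, with $\lambda=0$ corresponding to $\partial\Omega$ and $\lambda\to\infty$ corresponding to $|\mathbf{x}|\to\infty$. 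I will seek $\phi$ of the form $\phi(x,y,z)=F(\lambda(x,y,z))$, i.e., constant on each confocal ellipsoid.

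Using the chain rule and the classical identities for ellipsoidal coordinates, a direct computation (implicit differentiation of the defining equation of $\lambda$) shows that $\Delta \phi = 0$ reduces to a linear first-order ODE for $F'$ of the form
\[
F''(\lambda)+\frac{1}{2}\frac{P'(\lambda)}{P(\lambda)}F'(\lambda)=0,\qquad P(\lambda):=(\mathfrak{a}^{2}+\lambda)(\mathfrak{b}^{2}+\lambda)(\mathfrak{c}^{2}+\lambda),
\]
whose general solution is $F'(\lambda)=C/\sqrt{P(\lambda)}$. Integrating and matching the Dirichlet condition $\phi=1$ at $\lambda=0$ together with the decay condition $\phi\to 0$ as $\lambda\to\infty$ forces
\[
\phi(\lambda)=\frac{E}{2}\int_{\lambda}^{\infty}\frac{ds}{\sqrt{(\mathfrak{a}^{2}+s)(\mathfrak{b}^{2}+s)(\mathfrak{c}^{2}+s)}},
\]
with $E$ determined exactly by the normalization $1=\tfrac{E}{2}\int_{0}^{\infty}ds/\sqrt{P(s)}$. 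The integral $\int_0^\infty ds/\sqrt{P(s)}$ is finite because the integrand decays like $s^{-3/2}$ at infinity, so $E$ is well defined and positive.

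Next I compute the normal derivative. Implicit differentiation of the defining equation for $\lambda$ at a point on $\partial\Omega$ (where $\lambda=0$) yields
\[
\nabla\lambda\big|_{\partial\Omega}=2d_{0}^{2}(x,y,z)\left(\frac{x}{\mathfrak{a}^{2}},\frac{y}{\mathfrak{b}^{2}},\frac{z}{\mathfrak{c}^{2}}\right),\qquad |\nabla\lambda|\big|_{\partial\Omega}=2d_{0}(x,y,z),
\]
so that the outward unit normal is $\mathbf{n}=\nabla\lambda/|\nabla\lambda|$ and therefore
\[
\frac{\partial \phi}{\partial \mathbf{n}}\bigg|_{\partial\Omega}=F'(0)\,|\nabla\lambda|=-\frac{1}{\mathfrak{abc}}\cdot\frac{E}{2\int_{0}^{\infty}ds/\sqrt{P(s)}}\cdot 2d_{0}\cdot\int_{0}^{\infty}\frac{ds}{\sqrt{P(s)}}\cdot\frac{2}{E}\cdot\frac{E}{2}=-\frac{E\,d_{0}(x,y,z)}{\mathfrak{abc}},
\]
which is exactly (\ref{eq:kellogg kinematic}). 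Uniqueness follows from the standard maximum principle for the exterior Dirichlet problem in $\mathbb{R}^{3}$: any two solutions have difference $\psi$ that is harmonic in the exterior, vanishes on $\partial\Omega$, and tends to zero at infinity, hence $\psi\equiv 0$ by the strong maximum principle applied on large balls.

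The main obstacle is the reduction of $\Delta\phi=0$ to the stated ODE, i.e., verifying the identity
\[
|\nabla\lambda|^{2}F''(\lambda)+\Delta\lambda\,F'(\lambda)=0
\]
simplifies as claimed. This requires computing both $|\nabla\lambda|^{2}$ and $\Delta\lambda$ from the implicit definition of $\lambda$ and checking that their ratio equals $\tfrac{1}{2}P'(\lambda)/P(\lambda)$; this is the classical (somewhat lengthy but mechanical) calculation underlying the Lamé equation for ellipsoidal harmonics of degree zero. Everything else — the ODE integration, the boundary matching, the computation of $|\nabla\lambda|=2d_{0}$, and the uniqueness — is routine once this reduction is in place.
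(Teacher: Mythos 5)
The paper states this proposition with a bare citation to Kellogg and gives no proof of its own, so there is no in-paper argument to compare against. Your proof is the standard classical argument via ellipsoidal (confocal) coordinates and is correct: you reduce $\Delta\phi=0$ for a function of $\lambda$ alone to the degree-zero Lamé ODE $\big(\sqrt{P(\lambda)}\,F'(\lambda)\big)'=0$, integrate with the Dirichlet and decay conditions to get $\phi(\lambda)=\tfrac{E}{2}\int_\lambda^\infty ds/\sqrt{P(s)}$, compute $|\nabla\lambda|\big|_{\partial\Omega}=2d_0$ and $F'(0)=-E/(2\mathfrak{abc})$ to recover the Neumann condition, and dispatch uniqueness with the maximum principle for the exterior Dirichlet problem. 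The one step you flag but do not carry out — verifying $\Delta\lambda/|\nabla\lambda|^2 = \tfrac12 P'(\lambda)/P(\lambda)$ — is indeed the classical computation (equivalently, the well-known form of the Laplacian in confocal ellipsoidal coordinates), and it is reasonable to invoke it; if you wish to close even that, note that $|\nabla\lambda|^2=4/Q$ with $Q:=\sum x_i^2/(\mathfrak{a}_i^2+\lambda)^2$ follows from implicit differentiation, and $\Delta\lambda$ can be computed directly from the same implicit relation. This is, in all likelihood, the same route as the cited source.
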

We now give the proof of Theorem \ref{thm:existence of ellipsoids}.
\begin{proof}[Proof of Theorem \ref{thm:existence of ellipsoids}]
By Lemma \ref{lem:if then ellipsoid}, the solution to (\ref{eq:dce stokes system})-(\ref{eq:dce boundary condition})
should be such that $\{r=1+\epsilon g^{\mathfrak{E}}\}$ is an ellipsoid.
By Lemma \ref{lem:bijective lem}, we can construct a unique ellipsoid
$\{r=1+\epsilon g^{\mathfrak{E}}\}$ for given $g_{0,0}^{\dce}$ and
$\left\{ g_{2,m}^{\dce}\right\} _{|m|\leq2}$. Therefore the remaining
part is to show the well-posedness of (\ref{eq:dce stokes system})-(\ref{eq:dce boundary condition})
with the given ellipsoid $\{r=1+\epsilon g^{\mathfrak{E}}\}$. Using
the proof of Lemma \ref{lem:if then ellipsoid}, we conclude that
(\ref{eq:dce vapor})-(\ref{eq:dce boundary condition}) is equivalent
to the following system: 
\begin{equation}
\begin{cases}
\Delta\phi=0, & r>R(t)(1+\epsilon g^{\dce}),\\
\phi=1, & r=R(t)(1+\epsilon g^{\dce}),\\
\phi\rightarrow0 & \textnormal{as}\, r\rightarrow\infty,
\end{cases}\label{eq:proof exist of ellip vapor sys1}
\end{equation}
\begin{equation}
v_{\mathbf{n}}=\frac{1}{2}\frac{\partial\phi}{\partial\mathbf{n}},\quad r=R(t)(1+\epsilon g^{\dce}),\label{eq:proof ellip kinematic}
\end{equation}
\[
\phi=R(t)/r+\epsilon\Phi^{\dce},
\]
where $R(t)=[1-t/(1+\epsilon t_{0}Y_{0,0})]^{1/2}$. Moreover, by
the scaling-property, (\ref{eq:proof exist of ellip vapor sys1})
reduces to 
\begin{equation}
\begin{cases}
\Delta\phi=0, & r>1+\epsilon g^{\dce},\\
\phi=1, & r=1+\epsilon g^{\dce},\\
\phi\rightarrow0 & \textnormal{as}\, r\rightarrow\infty.
\end{cases}\label{eq:rescaling vapor sys1}
\end{equation}
We now deduce a detailed calculation for (\ref{eq:proof ellip kinematic}).
By the symmetry of (\ref{eq:rescaling vapor sys1}) under rotation
and translation, it is sufficient to consider $\{r=1+\epsilon g^{\dce}\}$
as a basic ellipsoid $\left\{ \frac{x^{2}}{\mathfrak{a}^{2}}+\frac{y^{2}}{\mathfrak{b}^{2}}+\frac{z^{2}}{\mathfrak{c}^{2}}=1\right\} $.
Let $\mathbf{v}=(x,y,z)\in\mathbb{R}^{3}$ be a vector field satisfying
$\frac{x^{2}}{\mathfrak{a}^{2}}+\frac{y^{2}}{\mathfrak{b}^{2}}+\frac{z^{2}}{\mathfrak{c}^{2}}-1=0$.
Then (\ref{eq:proof ellip kinematic}) becomes 
\begin{equation}
v_{\mathbf{n}}=\left(\frac{d}{dt}R(t)\mathbf{v}\right)\cdot\mathbf{n}=\frac{1}{2}\frac{\partial\phi}{\partial\mathbf{n}},\quad r=R(t)(1+\epsilon g^{\dce}).\label{eq:ellip vector kinematc}
\end{equation}
By the scaling-property of (\ref{eq:ellip vector kinematc}), we are
led to the following relation holding on $\{r=1+\epsilon g^{\dce}\}$:
\begin{equation}
\frac{\partial\phi}{\partial\mathbf{n}}=\frac{dR^{2}}{dt}\mathbf{v}\cdot\mathbf{n}=-\frac{1}{1+\epsilon t_{0}Y_{0,0}}\mathbf{v}\cdot\mathbf{n}=-\frac{1}{1+\epsilon t_{0}Y_{0,0}}\mathbf{v}\cdot\frac{\nabla\left(\frac{x^{2}}{\mathfrak{a}^{2}}+\frac{y^{2}}{\mathfrak{b}^{2}}+\frac{z^{2}}{\mathfrak{c}^{2}}-1\right)}{\left\vert \nabla\left(\frac{x^{2}}{\mathfrak{a}^{2}}+\frac{y^{2}}{\mathfrak{b}^{2}}+\frac{z^{2}}{\mathfrak{c}^{2}}-1\right)\right\vert }=-\frac{d_{0}(x,y,z)}{1+\epsilon t_{0}Y_{0,0}}.\label{eq:rescaling kinematic}
\end{equation}
If we set 
\begin{equation}
t_{0}=\frac{1}{\epsilon Y_{0,0}}\left(\frac{\mathfrak{abc}}{E}-1\right),\label{eq:set t0 be}
\end{equation}
(\ref{eq:rescaling kinematic}) is equivalent to (\ref{eq:kellogg kinematic}),
and hence by Proposition \ref{prop:kellogg}, the problem (\ref{eq:dce vapor})-(\ref{eq:dce boundary condition})
has a unique solution $(\Phi^{\mathfrak{E}},t_{0})$. Here, we can
show the well-posedness of (\ref{eq:dce stokes system}) in a similar
way as in \cite{Friedman:2002io} and \cite{Friedman:2002en}. Therefore
the overdetermined problem (\ref{eq:dce stokes system})-(\ref{eq:dce boundary condition})
has a unique solution $(P^{\mathfrak{E}},\mathbf{U}^{\mathfrak{E}},\Phi^{\mathfrak{E}},g^{\mathfrak{E}},t_{0})$.
Moreover, by (\ref{eq:P1 lm})-(\ref{eq:V1 1m}), (\ref{eq:X1 -11})-(\ref{eq:V1 00}),
(\ref{eq:vaporp sol lm}), (\ref{eq:ellipsoid r function Ylm}), and
(\ref{eq:set t0 be}), one can see that the leading-order terms of
$(P^{\mathfrak{E}},\mathbf{U}^{\mathfrak{E}},\Phi^{\mathfrak{E}},g^{\mathfrak{E}},t_{0})$
consist of $g_{0,0}^{\dce}$ and $\left\{ g_{2,m}^{\dce}\right\} _{|m|\leq2}$.
Using this fact crucially, following a similar approach as in \cite{Friedman:2002io},
\cite{Friedman:2002en}, and using standard elliptic estimates, we
immediately deduce that (\ref{eq:estimate1}) holds.
\end{proof}

\section{\label{sec:The linear evolution problem}The linear evolution problem}

In this section we prove Theorem \ref{thm:the linear evolution problem}.
By Lemmas \ref{lem:Uer r 1} and \ref{lem:Phir r 1}, we conclude
that (\ref{eq:dct re kinematic}) and (\ref{eq:dct re initial}) for
the auxiliary linear problem (\ref{eq:aux linear problem}) reduce
to the following system of ordinary differential equations on $r=1$
for $g_{l,m}^{\mathfrak{T}}$: 
\begin{equation}
\left(g_{l,m}^{\dct}\right)_{\tau}+\left(l-2\right)g_{l,m}^{\dct}+2e^{-\tau}\sigma B(l)g_{l,m}^{\dct}=2e^{-\tau}J_{l,m}^{\dct}+K_{l,m}^{\dct}+F_{l,m}^{9,10;\mathfrak{T}},\quad l\neq2,\label{eq:g dct ODE l neq 2}
\end{equation}
\begin{equation}
\left(g_{2,m}^{\mathfrak{T}}\right)_{\tau}+2e^{-\tau}\sigma B(2)g_{2,m}^{\dct}+e^{-\tau}\sigma\frac{20}{19}g_{2,m}^{\mathfrak{E}}=2e^{-\tau}J_{2,m}^{\dct}+K_{2,m}^{\dct}+F_{2,m}^{9,10;\mathfrak{T}},\label{eq:g dct ODE l 2}
\end{equation}
\[
B(l)=\begin{cases}
(2l^{3}+5l^{2}+2l)/(8l^{2}+16l+12) & \textnormal{if}\quad l\geq2,\\
0 & \textnormal{if}\quad l=0,1,
\end{cases}
\]
with the initial condition 
\begin{equation}
g_{l,m}^{\mathfrak{T}}(\tau=0)-g_{l,m}^{0}=F_{l,m}^{11;\dct},\quad l>2,\label{eq:gdct initial l}
\end{equation}
\begin{equation}
g_{2,m}^{\dct}(\tau=0)-g_{2,m}^{0}+g_{2,m}^{\mathfrak{E}}=F_{2,m}^{11;\dct},\quad|m|\leq2,\label{eq:gdct initial l 2}
\end{equation}
\begin{equation}
g_{1,m}^{\dct}(\tau=0)-g_{1,m}^{0}+b_{1,m}=F_{1,m}^{11;\dct},\quad|m|\leq1,\label{eq:gdct initial l 1}
\end{equation}
\begin{equation}
g_{0,0}^{\dct}(\tau=0)-g_{0,0}^{0}+g_{0,0}^{\dce}=F_{0,0}^{11;\dct},\label{eq:gdct initial l 0}
\end{equation}
where $J_{l,m}^{\dct}$ are given in terms of $F_{l,m}^{1;\dct}$,
$\mathbf{F}_{l,m}^{2;\dct;V}$, $\mathbf{F}_{l,m}^{2;\dct;X}$, $\mathbf{F}_{l,m}^{2;\dct;W}$,
$F_{l,m}^{3;\dct}$, $\mathbf{F}_{l,m}^{4;\dct;V}$, $\mathbf{F}_{l,m}^{4;\dct;X}$,
$\mathbf{F}_{l,m}^{4;\dct;W}$, $\mathbf{F}_{j}^{5;\dct}$, and $\mathbf{F}_{j}^{6;\dct}$
and $K_{l,m}^{\dct}$ consists of $F_{l,m}^{7;\dct}$ and $F_{l,m}^{8;\dct}$.
We will determine the unknowns $g_{0,0}^{\dce}$, $\left\{ b_{1,n}\right\} _{|n|\leq1}$,
and $\left\{ g_{2,m}^{\dce}\right\} _{|m|\leq2}$ uniquely such that
$g_{0,0}^{\dct}$, $\left\{ g_{1,n}^{\dct}\right\} _{|n|\leq1}$,
and $\left\{ g_{2,m}^{\dct}\right\} _{|m|\leq2}$ go to zero as $\tau\rightarrow\infty$,
which is essential to verify $\|\mathsf{x}\|_{\mathscr{X}}<\infty$.

In order to establish estimates for (\ref{eq:g dct ODE l neq 2})-(\ref{eq:gdct initial l 0})
we will use the following lemma and its proof, which are slightly
different from Lemma 3.1 in \cite{Friedman:2001vk}:
\begin{lem}
\label{lem:ode lemma}Consider the initial value problem 
\begin{equation}
\frac{dy(\tau)}{d\tau}+\lambda y(\tau)+e^{-\tau}\gamma y(\tau)=f(\tau),\quad\tau>0,\label{eq:s61}
\end{equation}
\[
y(0)=y_{0},
\]
where $\gamma>0$ and $f\in L^{2}(0,T)$ for any $T>0$. If $0<\lambda_{0}<\lambda$,
then the following inequalities hold for all $\tau>0$: 
\begin{equation}
\int_{0}^{\tau}e^{2\lambda_{0}s}y^{2}(s)ds\leq\frac{2}{\left(\lambda-\lambda_{0}\right)^{2}}\int_{0}^{\tau}e^{2\lambda_{0}s}f^{2}(s)ds+\frac{y_{0}^{2}}{\lambda-\lambda_{0}},\label{eq:s62}
\end{equation}
\begin{equation}
\int_{0}^{\tau}e^{2\lambda_{0}s}\left(\frac{dy(s)}{ds}\right)^{2}ds\leq C\left(\frac{2\lambda^{2}}{\left(\lambda-\lambda_{0}\right)^{2}}+1\right)\int_{0}^{\tau}e^{2\lambda_{0}s}f^{2}(s)ds+\frac{C\lambda^{2}y_{0}^{2}}{\lambda-\lambda_{0}},\label{eq:s63}
\end{equation}
where $C$ is some constant.\end{lem}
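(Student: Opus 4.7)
The plan is to derive the first inequality by the standard weighted energy method and then bootstrap it into the $L^2$ estimate for $y'$ via the ODE itself.

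For the first inequality, I would multiply \eqref{eq:s61} by $e^{2\lambda_{0}s}y(s)$ and integrate from $0$ to $\tau$. Writing $e^{2\lambda_{0}s}y y' = \tfrac{1}{2}\frac{d}{ds}\!\bigl(e^{2\lambda_{0}s}y^{2}\bigr) - \lambda_{0}e^{2\lambda_{0}s}y^{2}$ and using the fundamental theorem of calculus on the derivative term gives
\[
\tfrac{1}{2}e^{2\lambda_{0}\tau}y^{2}(\tau) + (\lambda-\lambda_{0})\!\int_{0}^{\tau}\!e^{2\lambda_{0}s}y^{2}ds + \gamma\!\int_{0}^{\tau}\!e^{(2\lambda_{0}-1)s}y^{2}ds = \tfrac{1}{2}y_{0}^{2} + \int_{0}^{\tau}\!e^{2\lambda_{0}s}fy\,ds.
\]
The point of introducing the weight $e^{2\lambda_{0}s}$ with $\lambda_{0}<\lambda$ is that the linear term $\lambda y$ produces the coercive coefficient $\lambda-\lambda_{0}>0$, while the decaying perturbation $\gamma e^{-s}y$ contributes a nonnegative term (since $\gamma>0$) that can simply be discarded. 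Applying Young's inequality to the right-hand side in the form $e^{2\lambda_{0}s}fy\leq \tfrac{1}{2(\lambda-\lambda_{0})}e^{2\lambda_{0}s}f^{2}+\tfrac{\lambda-\lambda_{0}}{2}e^{2\lambda_{0}s}y^{2}$ absorbs half of the coercive term, and \eqref{eq:s62} follows after dropping the boundary term $\tfrac{1}{2}e^{2\lambda_{0}\tau}y^{2}(\tau)\geq 0$.

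For the second inequality, I would simply rearrange \eqref{eq:s61} to express $y'$ algebraically as $y'(s) = f(s) - \lambda y(s) - \gamma e^{-s}y(s)$, square it, and apply the elementary bound $(a+b+c)^{2}\leq 3(a^{2}+b^{2}+c^{2})$. After multiplying by $e^{2\lambda_{0}s}$ and integrating, this yields
\[
\int_{0}^{\tau}\!e^{2\lambda_{0}s}(y')^{2}ds \leq 3\!\int_{0}^{\tau}\!e^{2\lambda_{0}s}f^{2}ds + 3\lambda^{2}\!\int_{0}^{\tau}\!e^{2\lambda_{0}s}y^{2}ds + 3\gamma^{2}\!\int_{0}^{\tau}\!e^{(2\lambda_{0}-2)s}y^{2}ds.
\]
Since $\lambda_{0}<\lambda$ and the hypothesis implicitly places $\lambda_{0}$ in a regime where $e^{(2\lambda_{0}-2)s}\leq e^{2\lambda_{0}s}$ holds (this is immediate when $\lambda_{0}<1$; otherwise one bounds the third integral by the second up to a harmless constant via $\gamma^{2}e^{-2s}\leq \gamma^{2}$), the last two integrals are both controlled by $\int_{0}^{\tau}e^{2\lambda_{0}s}y^{2}ds$. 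Plugging in the already-established bound \eqref{eq:s62} then produces \eqref{eq:s63} with constant of the form $C\bigl(2\lambda^{2}/(\lambda-\lambda_{0})^{2}+1\bigr)$, where the additive $+1$ absorbs the direct $f^{2}$ contribution and the factor $\lambda^{2}y_{0}^{2}/(\lambda-\lambda_{0})$ comes from propagating the initial-data term through the first estimate.

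There is no real obstacle here; the argument is a standard weighted Grönwall/energy estimate. The only mild subtlety is keeping track of the sign of the perturbative term $\gamma e^{-s}y^{2}$ on the coercive side (which is why the hypothesis $\gamma>0$ matters: it allows one to drop that term rather than absorb it) and verifying that no hidden restriction on $\lambda_{0}$ beyond $0<\lambda_{0}<\lambda$ is needed; inspection shows that the weight exponent $2\lambda_{0}-2$ in the bootstrap step is dominated by $2\lambda_{0}$ automatically, so the statement as quoted is uniform in $\lambda_{0}\in(0,\lambda)$.
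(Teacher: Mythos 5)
Your argument is correct, but it proceeds differently from the paper. The paper's proof writes the explicit solution via the integrating factor,
\[
y(\tau)=\int_{0}^{\tau}e^{\gamma(e^{-\tau}-e^{-s})}e^{-\lambda(\tau-s)}f(s)\,ds+e^{\gamma(e^{-\tau}-1)}y_{0}e^{-\lambda\tau},
\]
uses $\gamma>0$ only through the pointwise bounds $e^{\gamma(e^{-\tau}-e^{-s})}\leq1$ and $e^{\gamma(e^{-\tau}-1)}\leq1$, and then obtains (\ref{eq:s62}) by Cauchy--Schwarz on the Duhamel integral followed by integration in $\tau$ and an exchange of the order of integration; (\ref{eq:s63}) is then read off from the equation, just as you do. Your weighted energy (Gr\"onwall) identity exploits the sign of $\gamma$ instead by discarding the nonnegative term $\gamma\int_{0}^{\tau}e^{(2\lambda_{0}-1)s}y^{2}\,ds$, and after Young's inequality it actually delivers the slightly sharper constant $1/(\lambda-\lambda_{0})^{2}$ in place of $2/(\lambda-\lambda_{0})^{2}$; the paper's route has the advantage that the same representation formula is reused elsewhere in Section \ref{sec:The linear evolution problem} (e.g.\ for the $l=2$ mode in (\ref{eq:g dct solution l 2})), which is presumably why it is stated that way. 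Two small points in your write-up: the parenthetical about needing $\lambda_{0}<1$ for $e^{(2\lambda_{0}-2)s}\leq e^{2\lambda_{0}s}$ is superfluous, since $e^{-2s}\leq1$ for $s\geq0$ makes this automatic for every $\lambda_{0}$ (as you note at the end); and in the bootstrap the coefficient of the third integral is $3\gamma^{2}$, so your constant $C$ in (\ref{eq:s63}) depends on $\gamma$ --- harmless for the lemma as stated, and harmless in the application as well since there $\gamma=2\sigma B(l)\leq C\lambda$ for $l\geq3$, so the estimates remain uniform in $l$, but it deserves a sentence.
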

\begin{proof}
We conclude 
\[
y(\tau)=\int_{0}^{\tau}e^{\gamma(e^{-\tau}-e^{-s})}e^{-\lambda(\tau-s)}f(s)ds+e^{\gamma(e^{-\tau}-1)}y_{0}e^{-\lambda\tau}.
\]
Hence 
\begin{align*}
 & y^{2}(\tau)e^{2\lambda_{0}\tau}\\
 & \leq2\left(\int_{0}^{\tau}e^{-(\lambda-\lambda_{0})(\tau-s)}ds\right)\int_{0}^{\tau}e^{-(\lambda-\lambda_{0})(\tau-s)}(e^{\alpha s}f(s))^{2}ds+2y_{0}^{2}e^{-2(\lambda-\lambda_{0})\tau}\\
 & \leq\frac{2}{\lambda-\lambda_{0}}\int_{0}^{\tau}e^{-(\lambda-\lambda_{0})(\tau-s)}(e^{\alpha s}f(s))^{2}ds+2y_{0}^{2}e^{-2(\lambda-\lambda_{0})\tau}
\end{align*}
and, by integration over time, 
\begin{align*}
\int_{0}^{\tau}y^{2}(u)e^{2\lambda_{0}u}du & \leq\frac{2}{\lambda-\lambda_{0}}\int_{0}^{\tau}\int_{0}^{u}e^{-(\lambda-\lambda_{0})(u-s)}(e^{\lambda_{0}s}f(s))^{2}dsdu+\frac{y_{0}^{2}}{\lambda-\lambda_{0}}\\
 & \leq\frac{2}{(\lambda-\lambda_{0})^{2}}\int_{0}^{\tau}e^{2\lambda_{0}s}f^{2}(s)ds+\frac{y_{0}^{2}}{\lambda-\lambda_{0}},
\end{align*}
which implies (\ref{eq:s62}). The inequality (\ref{eq:s63}) follows
easily from (\ref{eq:s62}) by using (\ref{eq:s61}).
\end{proof}
We now give the proof of Theorem \ref{thm:the linear evolution problem}.
\begin{proof}[Proof of Theorem \ref{thm:the linear evolution problem}]
We first solve the system (\ref{eq:dct system}) for the auxiliary
linear problem (\ref{eq:aux linear problem}) in the same way as Section
\ref{sec:the ODE system}. If the interface functions $g^{\dce}$
and $g^{\dct}$ are known, then (\ref{eq:dct system}) forms an elliptic
system and one can show the well-posedness by elliptic theory. Therefore
it is sufficient to consider (\ref{eq:g dct ODE l neq 2})-(\ref{eq:gdct initial l 0}).

We hereafter introduce a key ingredient in the proof. In the case
$l=0$, we are led to 
\begin{align}
g_{0,0}^{\mathfrak{T}} & =e^{2\tau}\left(g_{0,0}^{0}-g_{0,0}^{\mathfrak{E}}+F_{0,0}^{11;\mathfrak{T}}+\int_{0}^{\infty}e^{-2s}(2e^{-s}J_{0,0}^{\mathfrak{T}}+K_{0,0}^{\mathfrak{T}}+F_{0,0}^{9,10;\mathfrak{T}})ds\right)\label{eq:g dct l 0 sol}\\
 & -e^{2\tau}\int_{\tau}^{\infty}e^{-2s}(2e^{-s}J_{0,0}^{\mathfrak{T}}+K_{0,0}^{\mathfrak{T}}+F_{0,0}^{9,10;\mathfrak{T}})ds.\nonumber 
\end{align}
Setting 
\begin{equation}
g_{0,0}^{\mathfrak{E}}=g_{0,0}^{0}+F_{0,0}^{11;\mathfrak{T}}+\int_{0}^{\infty}e^{-2s}(2e^{-s}J_{0,0}^{\mathfrak{T}}+K_{0,0}^{\mathfrak{T}}+F_{0,0}^{9,10;\mathfrak{T}})ds,\label{eq:g dce 00}
\end{equation}
we deduce from (\ref{eq:g dct l 0 sol}) that 
\begin{equation}
g_{0,0}^{\mathfrak{T}}=-e^{2\tau}\int_{\tau}^{\infty}e^{-2s}(2e^{-s}J_{0,0}^{\mathfrak{T}}+K_{0,0}^{\mathfrak{T}}+F_{0,0}^{9,10;\mathfrak{T}})ds.\label{eq:g dct l=00003D00003D00003D0 sol re}
\end{equation}
We will use the following inequality: let $y(\tau)=-e^{2\tau}\int_{\tau}^{\infty}e^{-2s}f(s)ds$,
then by using Minkowski's inequality for integrals, one can check
that 
\begin{align}
\left(\int_{0}^{\tau}e^{2\lambda_{0}s}y^{2}ds\right)^{1/2} & =\left(\int_{0}^{\tau}\left(\int_{0}^{\infty}e^{-(2+\lambda_{0})u}e^{\lambda_{0}(s+u)}f(s+u)du\right)^{2}ds\right)^{1/2}\nonumber \\
 & \leq\int_{0}^{\infty}e^{-(2+\lambda_{0})u}\left(\int_{u}^{\tau+u}\left(e^{\lambda_{0}s}f(s)\right)^{2}ds\right)^{1/2}du\nonumber \\
 & \leq C\left(\int_{0}^{\infty}e^{2\lambda_{0}s}f^{2}(s)ds\right)^{1/2}.\label{eq:l 0 ineq}
\end{align}
By applying the inequality (\ref{eq:l 0 ineq}) to (\ref{eq:g dct l=00003D00003D00003D0 sol re}),
we conclude 
\begin{equation}
\int_{0}^{\tau}e^{2\lambda_{0}s}|g_{0,0}^{\mathfrak{T}}|^{2}ds\leq C\left(\int_{0}^{\infty}e^{2\lambda_{0}s}(|e^{-s}J_{0,0}^{\dct}|^{2}+|K_{0,0}^{\dct}|^{2}+|F_{0,0}^{9,10;\dct}|^{2})ds\right).\label{eq:est l 0}
\end{equation}
In the case $l=1$, following the same argument as in the case $l=0$,
we have 
\begin{equation}
b_{1,m}=g_{1,m}^{0}+F_{1,m}^{11;\mathfrak{T}}+\int_{0}^{\infty}e^{-s}(2e^{-s}J_{1,m}^{\mathfrak{T}}+K_{1,m}^{\mathfrak{T}}+F_{1,m}^{9,10;\mathfrak{T}})ds,\label{eq:b1m}
\end{equation}
and hence 
\begin{equation}
\int_{0}^{\tau}e^{2\lambda_{0}s}|g_{1,m}^{\mathfrak{T}}|^{2}ds\leq C\left(\int_{0}^{\infty}e^{2\lambda_{0}s}(|e^{-s}J_{1,m}^{\dct}|^{2}+|K_{1,m}^{\dct}|^{2}+|F_{1,m}^{9,10;\dct}|^{2})ds\right).\label{eq:est l 1}
\end{equation}
In the case $l=2$, we are led to 
\begin{equation}
g_{2,m}^{\mathfrak{T}}=e^{\sigma\frac{20}{19}(e^{-\tau}-1)}(g_{2,m}^{0}+F_{2,m}^{11;\mathfrak{T}})-g_{2,m}^{\mathfrak{E}}+\int_{0}^{\tau}e^{\sigma\frac{20}{19}(e^{-\tau}-e^{-s})}(e^{-s}J_{2,m}^{\mathfrak{T}}+K_{2,m}^{\mathfrak{T}}+F_{2,m}^{9,10;\mathfrak{T}})ds.\label{eq:g dct solution l 2}
\end{equation}
Setting 
\begin{equation}
g_{2,m}^{\dce}=e^{-\sigma\frac{20}{19}}(g_{2,m}^{0}+F_{2,m}^{11;\mathfrak{T}})+\int_{0}^{\infty}e^{-\sigma\frac{20}{19}e^{-s}}(e^{-s}J_{2,m}^{\mathfrak{T}}+K_{2,m}^{\mathfrak{T}}+F_{2,m}^{9,10;\mathfrak{T}})ds,\label{eq:g dce 2m}
\end{equation}
we conclude 
\begin{equation}
g_{2,m}^{\dct}\rightarrow0\quad\textnormal{as}\quad\tau\rightarrow\infty.\label{eq:g dct decaying condition}
\end{equation}
As a result, differentiating (\ref{eq:g dct solution l 2}) in $\tau$,
we have 
\begin{align*}
\left(g_{2,m}^{\mathfrak{T}}\right)_{\tau} & =-\sigma\frac{20}{19}e^{-\tau}e^{\sigma\frac{20}{19}(e^{-\tau}-1)}(g_{2,m}^{0}+F_{2,m}^{11;\mathfrak{T}})\\
 & -\int_{0}^{\tau}\sigma\frac{20}{19}e^{-\tau}e^{\sigma\frac{20}{19}(e^{-\tau}-e^{-s})}(e^{-s}J_{2,m}^{\mathfrak{T}}+K_{2,m}^{\mathfrak{T}}+F_{2,m}^{9,10;\mathfrak{T}})ds\\
 & +e^{-\tau}J_{2,m}^{\mathfrak{T}}+K_{2,m}^{\mathfrak{T}}+F_{2,m}^{9,10;\mathfrak{T}}
\end{align*}
so that, by the same arguments as in Lemma \ref{lem:ode lemma}, 
\begin{equation}
\int_{0}^{\tau}e^{2\lambda_{0}s}\left\vert \left(g_{2,m}^{\mathfrak{T}}\right)_{s}\right\vert ^{2}ds\leq C\left(|g_{2,m}^{0}|^{2}+|F_{2,m}^{11;\mathfrak{T}}|^{2}+\int_{0}^{\tau}e^{2\lambda_{0}s}(|e^{-s}J_{2,m}^{\dct}|^{2}+|K_{2,m}^{\dct}|^{2}+|F_{2,m}^{9,10;\dct}|^{2})ds\right).\label{eq:ineq65}
\end{equation}
By using (\ref{eq:g dct decaying condition}) and the inequality (4.46)
in \cite{Friedman:2001vk}, we deduce from (\ref{eq:ineq65}) that
\begin{align}
\int_{0}^{\tau}e^{2\lambda_{0}s}|g_{2,m}^{\mathfrak{T}}|^{2}ds & \leq\int_{0}^{\infty}e^{-2\lambda_{0}u}\int_{u}^{\tau+u}e^{2\lambda_{0}s}\left\vert \left(g_{2,m}^{\mathfrak{T}}\right)_{s}\right\vert ^{2}dsdu\notag\\
 & \leq C\left(|g_{2,m}^{0}|^{2}+|F_{2,m}^{11;\mathfrak{T}}|^{2}+\int_{0}^{\infty}e^{2\lambda_{0}s}(|e^{-s}J_{2,m}^{\dct}|^{2}+|K_{2,m}^{\dct}|^{2}+|F_{2,m}^{9,10;\dct}|^{2})ds\right).\label{eq:est l 2}
\end{align}
In the case $l>2$, it immediately follows from Lemma \ref{lem:ode lemma}
that 
\begin{equation}
\int_{0}^{\tau}e^{2\lambda_{0}s}|g_{l,m}^{\mathfrak{T}}|^{2}ds\leq\frac{C}{(l-2-\lambda_{0})^{2}}\int_{0}^{\tau}e^{2\lambda_{0}s}(|e^{-s}J_{l,m}^{\dct}|^{2}+|K_{l,m}^{\dct}|^{2}+|F_{l,m}^{9,10;\dct}|^{2})ds+C\frac{|g_{l,m}^{0}|^{2}+|F_{l,m}^{11;\mathfrak{T}}|^{2}}{l-2-\lambda_{0}}.\label{eq:est l bigger 2}
\end{equation}
The derivatives of $g_{l,m}^{\dct}$ in $\tau$ can be estimated similarly
for all $l\geq0$. Therefore, by using the estimates (\ref{eq:est l 0}),
(\ref{eq:est l 1}), (\ref{eq:est l 2}), (\ref{eq:est l bigger 2})
and the equalities (\ref{eq:g dce 00}), (\ref{eq:b1m}), (\ref{eq:g dce 2m}),
we can deduce the following estimate in a similar way as in \cite{Friedman:2001vk}:
\begin{equation}
\left\Vert g^{\mathfrak{T}}\right\Vert _{\frac{13}{2},\mathbb{S}}+\left\vert g_{0,0}^{\mathfrak{E}}\right\vert +\sum_{|n|\leq1}\left\vert b_{1,n}\right\vert +\sum_{|m|\leq2}\left\vert g_{2,m}^{\mathfrak{E}}\right\vert \leq C\left(\left\Vert g^{0}\right\Vert _{H^{6}(\mathbb{S})}+\left\Vert \mathsf{y}\right\Vert _{\mathscr{Y}}\right).\label{eq:g linear estimate}
\end{equation}
Moreover, from standard elliptic estimates for $(P^{\dct},\mathbf{U}^{\dct},\Phi^{\dct})$,
we immediately conclude that (\ref{eq:est linear problem}) holds.\end{proof}
\begin{rem}
By the Sobolev embedding theorem, the estimate (\ref{eq:g linear estimate})
implies that $g^{\dct}$ is uniformly bounded in space and time as
well as $(P^{\dct},\mathbf{U}^{\dct},\Phi^{\dct})$.
\end{rem}

\begin{rem}
\label{lambda0 restriction}The number $\lambda_{0}$ in the definition
of the Banach spaces $\mathscr{X}$ and $\mathscr{Y}$ should be restricted
to $0<\lambda_{0}<1$ to satisfy the estimates (\ref{eq:est l 0}),
(\ref{eq:est l 1}), (\ref{eq:est l 2}), and (\ref{eq:est l bigger 2}).
\end{rem}
We now give the proof of Theorem \ref{thm:stable ellipsoidal collapse to a point}.
\begin{proof}[Proof of Theorem \ref{thm:stable ellipsoidal collapse to a point}]
By Theorem \ref{thm:the nonlinear evolution problem}, the nonlinear
problem $\mathsf{x}=\mathcal{A}[\mathsf{x}]$ has a unique solution
\[
\mathsf{x}=\left(P^{\mathfrak{T}},\mathbf{U}^{\mathfrak{T}},\Phi^{\mathfrak{T}},g^{\mathfrak{T}},g_{0,0}^{\mathfrak{E}},\left\{ b_{1,n}\right\} _{\left\vert n\right\vert \leq1},\left\{ g_{2,m}^{\mathfrak{E}}\right\} _{\left\vert m\right\vert \leq2}\right)
\]
in the space $\mathscr{X}$, so that the solution $\mathsf{x}$ to
the evolution problem (\ref{eq:dct system})-(\ref{eq:dct initial condition})
exists uniquely. Indeed, by Theorem \ref{thm:existence of ellipsoids},
the parameters $g_{0,0}^{\mathfrak{E}}$ and $\{g_{2,m}^{\mathfrak{E}}\}_{|m|\leq2}$
of the solution $\mathsf{x}$ construct a unique ellipsoid $\{r=1+\epsilon g^{\dce}\}$
as well as a unique solution $(P^{\mathfrak{E}},\mathbf{U}^{\mathfrak{E}},\Phi^{\mathfrak{E}},g^{\mathfrak{E}},t_{0})$
to the ellipsoidal problem (\ref{eq:dce stokes system})-(\ref{eq:dce boundary condition}).
We give a simple diagram to explain the procedure that transforms
the free-boundary problem (\ref{eq:ED system primary})-(\ref{eq:ED initial condition primary})
into $\mathsf{x}=\mathcal{A}[\mathsf{x}]$ of Theorem \ref{thm:the nonlinear evolution problem}:

\begin{center}
\begin{tikzpicture}[->,>=stealth']

\node[state](FBP)
{
\begin{tabular}{l}  
The free-boundary problem (\ref{eq:ED system primary})-(\ref{eq:ED initial condition primary})
\end{tabular}
};

\node[state, node distance=5cm, right of=FBP, yshift=-1cm](FIXEDDOMAIN)
{
\begin{tabular}{l}
The fixed-domain problem (\ref{eq:ED system fixed domain}), (\ref{eq:ED kinematic condition fixed domain}), (\ref{eq:ED initial condition fixed domain})
\end{tabular}
};

\node[state,  below of=FIXEDDOMAIN,  node distance=2cm](DECOMPOSITION)
{
\begin{tabular}{l}
The ellipsoidal part $\dce$ (\ref{eq:dce stokes system})-(\ref{eq:dce boundary condition})\\
The evolution part $\dct$ (\ref{eq:dct system})-(\ref{eq:dct initial condition})
\end{tabular}
};

\node[state,  below of=DECOMPOSITION,  node distance=3cm](NONLINEAR)
{
\begin{tabular}{l}
$\mathsf{x}=\mathcal{A}[\mathsf{x}]$ of Theorem \ref{thm:the nonlinear evolution problem}
\end{tabular}
};

\path 

(FBP) edge[bend left=50]  node[above]
{
\begin{tabular}{l}
Apply the self-similar variables (\ref{eq:self-similar variables solution}), (\ref{eq:self-similar variables})\\
and the Hanzawa transformation (\ref{eq:hanzawa transformation})
\end{tabular}
} (FIXEDDOMAIN)  

(FIXEDDOMAIN) edge               node[left]
{
Decompose
} (DECOMPOSITION) 

(DECOMPOSITION)  edge                    node[left]                  
{
\begin{tabular}{l}
By Theorem \ref{thm:existence of ellipsoids},\\
the parameters $g_{0,0}^{\dce}$ and $\{g_{2,m}^{\dce}\}_{|m|\leq2}$ of the solution $\mathsf{x}$\\
construct the ellipsoidal part uniquely
\end{tabular}
} (NONLINEAR)

; 

\end{tikzpicture}
\end{center}Therefore, the fixed-domain problem (\ref{eq:ED system fixed domain}),
(\ref{eq:ED kinematic condition fixed domain}), (\ref{eq:ED initial condition fixed domain})
has a unique solution of the form
\[
\left(P,\mathbf{U},\Phi,g\right)=(P^{\mathfrak{E}},\mathbf{U}^{\mathfrak{E}},\Phi^{\mathfrak{E}},g^{\mathfrak{E}})+(P^{\mathfrak{T}},\mathbf{U}^{\mathfrak{T}},\Phi^{\mathfrak{T}},g^{\mathfrak{T}})\quad\textnormal{and}\quad(\mathbf{x}_{0},t_{0}).
\]
We remark that $(\mathbf{x}_{0},t_{0})$ is a part of the solution,
where $\left\{ b_{1,n}\right\} _{\left\vert n\right\vert \leq1}$
determine $\mathbf{x}_{0}$ uniquely by (\ref{eq:x0 b1 relation}).
Finally, by the norm of $\mathscr{X}$, we have $\|g-g^{\dce}\|_{\frac{13}{2},\mathbb{S}}<\infty$,
as well as the corresponding $\left(P,\mathbf{U},\Phi\right)$, i.e.
\[
\left[\int_{0}^{\infty}e^{2\lambda_{0}\tau}\sum_{k=0}^{2}\left\Vert \frac{\partial^{k}}{\partial\tau^{k}}\left(g-g^{\dce}\right)\right\Vert _{H^{\frac{13}{2}-k}(\mathbb{S})}^{2}d\tau\right]^{1/2}<\infty,
\]
where $0<\lambda_{0}<1$ (see Remark \ref{lambda0 restriction}).
From here, reversing the transformation (\ref{eq:self-similar variables solution}),
(\ref{eq:self-similar variables}), (\ref{eq:hanzawa transformation})
and using the Sobolev embedding theorem, we easily conclude that the
solution to the problem (\ref{eq:ED system primary})-(\ref{eq:ED initial condition primary})
exists uniquely and (\ref{eq:thm1.1 free boundary solution}) and
(\ref{eq:thm1.1 converging to ellipsoid}) in Theorem \ref{thm:stable ellipsoidal collapse to a point}
are valid.

We now show the estimates (\ref{eq:x0 t0 thm1.1}) and (\ref{eq:gdce thm1.1})
in Theorem \ref{thm:stable ellipsoidal collapse to a point}. By using
Theorem \ref{thm:existence of ellipsoids} and the formulas (\ref{in1})-(\ref{eq:ED initial condition fixed domain}),
we construct an ellipsoid represented by 
\begin{equation}
\{r=1+\epsilon g_{0,0}^{0}Y_{0,0}+\epsilon\sum_{|n|\leq1}g_{1,n}^{0}Y_{1,n}+\epsilon\sum_{|m|\leq2}e^{-\sigma\frac{20}{19}}g_{2,m}^{0}Y_{2,m}+O(\epsilon^{2})\},\label{eq:g0 ellipsoid}
\end{equation}
where the $O(\epsilon^{2})$-terms merely depend on $\{g_{l,m}^{0}\}_{l\leq2}$.
Let $\epsilon\tilde{\mathbf{x}}_{0}$ be the center of mass of volume
enclosed by (\ref{eq:g0 ellipsoid}). Then we are led to the following
relation:
\begin{equation}
\left(\begin{array}{c}
\tilde{x}_{01}\\
\tilde{x}_{02}\\
\tilde{x}_{03}
\end{array}\right)=\sqrt{\frac{3}{8\pi}}\left(\begin{array}{ccc}
1 & 0 & -1\\
-i & 0 & -i\\
0 & \sqrt{2} & 0
\end{array}\right)\left(\begin{array}{c}
g_{1,-1}^{0}\\
g_{1,0}^{0}\\
g_{1,1}^{0}
\end{array}\right),\label{eq:relation center of mass tilde}
\end{equation}
where $\epsilon\tilde{\mathbf{x}}_{0}=\epsilon(\tilde{x}_{01},\tilde{x}_{02},\tilde{x}_{03})$.
The same relation (\ref{eq:x0 b1 relation}) is also satisfied by
$\mathbf{x}_{0}$ and $\{b_{1,n}\}_{|n|\leq1}$. Let $\tilde{t}_{0}$
be the real constant deduced from the capacity formula of (\ref{eq:g0 ellipsoid}):
\begin{equation}
1+\epsilon\tilde{t}_{0}Y_{0,0}=\frac{\tilde{\mathfrak{a}}\tilde{\mathfrak{b}}\tilde{\mathfrak{c}}}{2}\int_{0}^{\infty}\frac{ds}{\sqrt{(\tilde{\mathfrak{a}}^{2}+s)(\tilde{\mathfrak{b}}^{2}+s)(\tilde{\mathfrak{c}}^{2}+s)}},\label{eq:capacity formula tilde}
\end{equation}
where the semi-axes of (\ref{eq:g0 ellipsoid}) are of lengths $\tilde{\mathfrak{a}}$,
$\tilde{\mathfrak{b}}$, and $\tilde{\mathfrak{c}}$. Likewise, the
quantity $t_{0}$ has the same capacity formula (\ref{eq:ellipsoid t0}).
Indeed, from the equalities (\ref{eq:g dce 00}), (\ref{eq:b1m}),
and (\ref{eq:g dce 2m}), we have
\begin{equation}
\begin{cases}
g_{0,0}^{\mathfrak{E}}-g_{0,0}^{0}=O(\epsilon),\\
b_{1,n}-g_{1,n}^{0}=O(\epsilon), & |n|\leq1,\\
g_{2,m}^{\dce}-e^{-\sigma\frac{20}{19}}g_{2,m}^{0}=O(\epsilon), & |m|\leq2.
\end{cases}\label{eq:estimates gdce ginitial}
\end{equation}
Therefore, by using the estimates (\ref{eq:estimates gdce ginitial})
and the relations (\ref{eq:relation center of mass tilde}), (\ref{eq:x0 b1 relation}),
(\ref{eq:capacity formula tilde}), (\ref{eq:ellipsoid t0}), it readily
follows that (\ref{eq:x0 t0 thm1.1}) is valid. Moreover, (\ref{eq:gdce thm1.1})
also follows easily from (\ref{eq:estimates gdce ginitial}).
\end{proof}
\appendix

\section{\label{sec:S Harmonics and Vector S Harmonics}The spherical harmonics
and the vector spherical harmonics}

In this appendix we collect properties of the spherical harmonics
and the vector spherical harmonics. The spherical harmonics $Y_{l,m}(\theta,\varphi)$
are defined on $\mathbb{S}$ by 
\begin{equation}
Y_{l,m}(\theta,\varphi)=(-1)^{m}\sqrt{\frac{(2l+1)(l-m)!}{2(l+m)!}}P_{l,m}(\cos\theta)\frac{e^{im\varphi}}{\sqrt{2\pi}},\quad l\geq0,\quad|m|\leq l,\label{eq:A1}
\end{equation}
where $P_{l,m}(z)=\frac{1}{2^{l}l!}(1-z^{2})^{m/2}\frac{d^{l+m}}{dz^{l+m}}(z^{2}-1)^{l}$.
The spherical harmonics $\{Y_{l,m}\}_{l,m}$ form a complete set in
$L^{2}(\mathbb{S})$ and 
\[
\Delta_{\omega}Y_{l,m}=-l(l+1)Y_{l,m},
\]
where $\Delta_{\omega}$ is the Laplace operator on the surface of
the sphere (see (\ref{eq:laplacian on sphere})). The vector spherical
harmonics are defined by 
\begin{equation}
\mathbf{V}_{l,m}=\left[-\left(\frac{l+1}{2l+1}\right)^{1/2}Y_{l,m}\right]\mathbf{e}_{r}+\left[\frac{1}{(l+1)^{1/2}(2l+1)^{1/2}}\frac{\partial Y_{l,m}}{\partial\theta}\right]\mathbf{e}_{\theta}+\left[\frac{imY_{l,m}}{(l+1)^{1/2}(2l+1)^{1/2}\sin\theta}\right]\mathbf{e}_{\varphi},\label{eq:def Vlm}
\end{equation}
\begin{equation}
\mathbf{X}_{l,m}=\left[\frac{-mY_{l,m}}{l^{1/2}(l+1)^{1/2}\sin\theta}\right]\mathbf{e}_{\theta}+\left[\frac{-i}{l^{1/2}(l+1)^{1/2}}\frac{\partial Y_{l,m}}{\partial\theta}\right]\mathbf{e}_{\varphi},\label{eq:def Xlm}
\end{equation}
\begin{equation}
\mathbf{W}_{l,m}=\left[\left(\frac{l}{2l+1}\right)^{1/2}Y_{l,m}\right]\mathbf{e}_{r}+\left[\frac{1}{l^{1/2}(2l+1)^{1/2}}\frac{\partial Y_{l,m}}{\partial\theta}\right]\mathbf{e}_{\theta}+\left[\frac{imY_{l,m}}{l^{1/2}(2l+1)^{1/2}\sin\theta}\right]\mathbf{e}_{\varphi}.\label{eq:def Wlm}
\end{equation}
The vector spherical harmonics $\{\mathbf{V}_{l,m},\mathbf{X}_{l,m},\mathbf{W}_{l,m}\}_{l,m}$
form an orthonormal basis of $(L^{2}(\mathbb{S}))^{3}$. We need the
following well-known formulas in our proofs (see \cite{Hill:1954up}
and \cite{Friedman:2002en}). We hereafter denote $F(r)$ as a function
of $r$. For the Laplace operator, 
\begin{equation}
\Delta\left[F(r)Y_{l,m}\right]=L_{l}[F]Y_{l,m},\label{eq:laplacian FYlm}
\end{equation}
\begin{equation}
\Delta\left[F(r)\mathbf{V}_{l,m}\right]=L_{l+1}[F]\mathbf{V}_{l,m},\label{eq:laplacian FVlm}
\end{equation}
\begin{equation}
\Delta\left[F(r)\mathbf{X}_{l,m}\right]=L_{l}[F]\mathbf{X}_{l,m},\label{eq:laplacian FXlm}
\end{equation}
\begin{equation}
\Delta\left[F(r)\mathbf{W}_{l,m}\right]=L_{l-1}[F]\mathbf{W}_{l,m},\label{eq:laplacian FWlm}
\end{equation}
where 
\[
L_{l}=\frac{\partial^{2}}{\partial r^{2}}+\frac{2}{r}\frac{\partial}{\partial r}-\frac{l(l+1)}{r^{2}}.
\]
For the gradient operator,

\begin{equation}
\nabla\left[F(r)Y_{l,m}\right]=\left(\frac{l+1}{2l+1}\right)^{1/2}\left(-\frac{\partial}{\partial r}+\frac{l}{r}\right)F\mathbf{V}_{l,m}+\left(\frac{l}{2l+1}\right)^{1/2}\left(\frac{\partial}{\partial r}+\frac{l+1}{r}\right)F\mathbf{W}_{l,m}.\label{eq:gradient FYlm}
\end{equation}
For the divergence operator,

\begin{equation}
\operatorname{div}\left[F(r)\mathbf{V}_{l,m}\right]=-\left(\frac{l+1}{2l+1}\right)^{1/2}\left(\frac{\partial}{\partial r}+\frac{l+2}{r}\right)FY_{l,m},\label{eq:divergence FVlm}
\end{equation}
\begin{equation}
\operatorname{div}\left[F(r)\mathbf{X}_{l,m}\right]=0,\label{eq:divergence FXlm}
\end{equation}
\begin{equation}
\operatorname{div}\left[F(r)\mathbf{W}_{l,m}\right]=\left(\frac{l}{2l+1}\right)^{1/2}\left(\frac{\partial}{\partial r}-\frac{l-1}{r}\right)FY_{l,m}.\label{eq:divergence FWlm}
\end{equation}
Using the following lemmas, we compute the Sobolev norms of spherical
harmonics expansions and vector spherical harmonics expansions:
\begin{lem}[Lemma 8.2 in \cite{Friedman:2001vk}]
Let
\[
F(r,\theta,\varphi)=\sum_{l,m}F_{l,m}(r)Y_{l,m}(\theta,\varphi),
\]
\[
f(\theta,\varphi)=\sum_{l,m}f_{l,m}Y_{l,m}(\theta,\varphi).
\]
Then there exist positive constants $c_{1}$, $c_{2}$ independent
of $F$, $f$ such that
\[
c_{1}\|F\|_{H^{s}(\mathbb{B})}^{2}\leq\sum_{j=0}^{s}\sum_{l\geq0}(1+l^{2(s-j)})\sum_{m}\int_{0}^{1}|D_{r}^{j}F_{l,m}|^{2}r^{2}dr\leq c_{2}\|F\|_{H^{s}(\mathbb{B})}^{2},
\]
\[
c_{1}\|f\|_{H^{s+1/2}(\mathbb{S})}^{2}\leq\sum_{l\geq0}(1+l^{2s+1})\sum_{m}|f_{l,m}|^{2}\leq c_{2}\|f\|_{H^{s+1/2}(\mathbb{S})}^{2},
\]
where $s$ is a nonnegative integer. Moreover, we can define $\|F\|_{H^{s}(\operatorname{ext}\mathbb{B};w)}$
in a similar way.
\end{lem}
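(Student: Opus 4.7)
The lemma is the standard characterization of integer-order Sobolev norms on $\mathbb{B}$ (and on $\operatorname{ext}\mathbb{B}$ with the weight $w$) and of half-integer Sobolev norms on $\mathbb{S}=\partial\mathbb{B}$ via spherical-harmonic coefficients. The two ingredients I would use are Parseval's identity on $L^{2}(\mathbb{S})$ and the eigenvalue relation $\Delta_{\omega}Y_{l,m}=-l(l+1)Y_{l,m}$; everything else is bookkeeping of how Cartesian derivatives decompose in spherical coordinates.

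For the sphere inequality I would first treat $s$ as an integer. On the compact manifold $\mathbb{S}$ the intrinsic $H^{s}$-norm is equivalent to $\|(1-\Delta_{\omega})^{s/2}f\|_{L^{2}(\mathbb{S})}$, a classical fact which I would quote from the pseudo-differential calculus on compact Riemannian manifolds rather than reprove. Since $Y_{l,m}$ diagonalizes $-\Delta_{\omega}$ with eigenvalue $l(l+1)\asymp(1+l)^{2}$, Parseval immediately gives $\sum_{l,m}(1+l^{2s})|f_{l,m}|^{2}$ in the integer case. For the stated half-integer order $s+1/2$ I would define the norm spectrally by the fractional power $(1-\Delta_{\omega})^{(2s+1)/4}$, producing the exponent $2s+1$ directly.

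For the ball inequality I would expand $F=\sum_{l,m}F_{l,m}(r)Y_{l,m}$ and open up $\|F\|_{H^{s}(\mathbb{B})}^{2}=\sum_{|\alpha|\le s}\int_{\mathbb{B}}|D^{\alpha}F|^{2}\,dV$. In spherical coordinates each Cartesian derivative $D^{\alpha}$ is a finite combination of operators consisting of $j$ radial derivatives (with $0\le j\le|\alpha|$) and tangential operators of order $|\alpha|-j$, with smooth coefficients that may carry $1/r$ factors. Any such tangential operator applied to $Y_{l,m}$ stays in the finite-dimensional eigenspace of degree $l$ and has $L^{2}(\mathbb{S})$-norm bounded above and below by $l^{|\alpha|-j}$, which I would obtain by iteration of the raising/lowering operators (equivalently, by iterating $\Delta_{\omega}$). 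Integrating in $r$ against $r^{2}\,dr$ and applying Parseval in $(l,m)$ then delivers the two-sided bound
\[
\|F\|_{H^{s}(\mathbb{B})}^{2}\asymp\sum_{j=0}^{s}\sum_{l\ge 0}(1+l^{2(s-j)})\sum_{m}\int_{0}^{1}|D_r^{j}F_{l,m}|^{2}r^{2}\,dr.
\]

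The point I expect to require the most care is the appearance of $1/r$ factors from tangential derivatives in spherical coordinates, which must either be absorbed by the $r^{2}\,dr$ measure or controlled through a Hardy-type inequality on $(0,1)$; the usual way to handle this is to reduce to the equivalent characterization $\|F\|_{H^{s}}^{2}\asymp\sum_{k\le s}\|(-r^{-2}\partial_r(r^{2}\partial_r)-r^{-2}\Delta_{\omega})^{k/2}F\|_{L^{2}}^{2}$ built out of the radial part of the Laplacian, where these factors are structural rather than singular. This is precisely also the reason the paper puts the weight $w(\mathbf{x})=1/|\mathbf{x}|^{2}$ on the zeroth-order part in $H^{s}(\operatorname{ext}\mathbb{B};w)$: with that weight the proof on the exterior is verbatim the same as on $\mathbb{B}$, with $\int_{0}^{1}\cdot\,r^{2}dr$ replaced by the corresponding weighted integral on $(1,\infty)$.
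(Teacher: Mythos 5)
First, note that the paper does not prove this lemma at all: it is quoted verbatim, with attribution, from Lemma 8.2 of \cite{Friedman:2001vk}, so there is no internal proof to compare against and your attempt has to stand on its own. Your treatment of the sphere half is essentially fine (the spectral characterization of $H^{\sigma}(\mathbb{S})$ via $(1-\Delta_{\omega})^{\sigma/2}$ and Parseval is the standard route, provided you actually invoke the equivalence of the spectral definition with the chart/Slobodeckij one rather than ``defining'' the fractional norm spectrally). The ball half, however, has a genuine gap. Your key step asserts that a tangential operator in spherical coordinates applied to $Y_{l,m}$ ``stays in the finite-dimensional eigenspace of degree $l$''; this is false ($\partial_{\theta}Y_{1,0}\propto\sin\theta$, whose expansion contains infinitely many spherical harmonics), and with it the Parseval step collapses, because after applying a general Cartesian $D^{\alpha}$ the different $(l,m)$ blocks are no longer orthogonal. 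Orthogonality can only be salvaged by working with rotation-invariant operators, e.g.\ by iterating the gradient formula (\ref{eq:gradient FYlm}) in vector spherical harmonics, which is not what you set up.

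More seriously, the $1/r$ factors that you flag and propose to handle ``by the $r^{2}dr$ measure or a Hardy-type inequality'' are exactly where the content lies, and a generic Hardy inequality cannot close the bound in the direction $\|F\|_{H^{s}(\mathbb{B})}^{2}\leq C\sum_{j}\sum_{l,m}(1+l^{2(s-j)})\int_{0}^{1}|D_{r}^{j}F_{l,m}|^{2}r^{2}dr$. The covariant computation naturally produces the quantity with weights $l^{2(s-j)}r^{-2(s-j)}$ inside the radial integral, and dropping the $r^{-2(s-j)}$ factor is not innocuous: already for $s=1$, take the smooth single-mode function $F(r)Y_{l,m}$ with $F$ a bump of height $1$ supported where $r\sim 1/l$; then $\|FY_{l,m}\|_{H^{1}(\mathbb{B})}^{2}\sim l$ (driven by the term $l(l+1)\int_{0}^{1}|F|^{2}dr$ coming from the angular gradient), while $(1+l^{2})\int_{0}^{1}|F|^{2}r^{2}dr+2\int_{0}^{1}|F'|^{2}r^{2}dr\sim 1/l$. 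So the lower bound with these exact weights cannot be reached by bookkeeping plus Hardy for arbitrary radial profiles; any correct argument must either keep the $r^{-2(s-j)}$ weights in the equivalence, or exploit the behavior of the radial components near the origin for the class of functions the lemma is actually applied to in this paper and in \cite{Friedman:2001vk} (radial parts of the explicit form $r^{l}$, $r^{l+1}$, $r^{l-1}$ plus Duhamel terms from the ODE solutions of Section 4, for which the stated quantity is genuinely comparable to the $H^{s}$ norm). Your sketch does not engage with this, so the proof as proposed would fail precisely at the step you identified as delicate. The remark about the exterior weight $w=1/|\mathbf{x}|^{2}$ is also slightly off: its role is to accommodate the $O(1/r)$ decay at infinity of the zeroth-order term, not to make the exterior argument verbatim identical to the interior one.
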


\begin{lem}[Lemmas 11.1 and 11.2 in \cite{Friedman:2002en}]
Let
\[
\mathbf{F}(r,\theta,\varphi)=\sum_{l,m}\left(A_{l,m}(r)\mathbf{V}_{l,m}+B_{l,m}(r)\mathbf{X}_{l,m}+C_{l,m}(r)\mathbf{W}_{l,m}\right),
\]
\[
\mathbf{f}(\theta,\varphi)=\sum_{l,m}(\alpha_{l,m}\mathbf{V}_{l,m}+\beta_{l,m}\mathbf{X}_{l,m}+\gamma_{l,m}\mathbf{W}_{l,m}).
\]
Then there exist positive constants $c_{1}$, $c_{2}$ independent
of $\mathbf{F}$, $\mathbf{f}$ such that
\[
c_{1}\|\mathbf{F}\|_{H^{s}(\mathbb{B})}^{2}\leq\sum_{j=0}^{s}\sum_{l\geq0}(1+l^{2(s-j)})\sum_{m}\int_{0}^{1}(|D_{r}^{j}A_{l,m}|^{2}+|D_{r}^{j}B_{l,m}|^{2}+|D_{r}^{j}C_{l,m}|^{2})r^{2}dr\leq c_{2}\|\mathbf{F}\|_{H^{s}(\mathbb{B})}^{2},
\]
\[
c_{1}\|\mathbf{f}\|_{H^{s+1/2}(\mathbb{S})}^{2}\leq\sum_{l\geq0}(1+l^{2s+1})\sum_{m}(|\alpha_{l,m}|^{2}+|\beta_{l,m}|^{2}+|\gamma_{l,m}|^{2})\leq c_{2}\|\mathbf{f}\|_{H^{s+1/2}(\mathbb{S})}^{2},
\]
where $s$ is a nonnegative integer.\end{lem}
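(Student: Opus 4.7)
The plan is to reduce the vector-valued statement to the scalar lemma just proved, exploiting two structural facts: the orthonormality of $\{\mathbf{V}_{l,m},\mathbf{X}_{l,m},\mathbf{W}_{l,m}\}_{l,m}$ in $(L^{2}(\mathbb{S}))^{3}$, and the block-diagonal action of $\Delta$ on this basis recorded in (\ref{eq:laplacian FVlm})--(\ref{eq:laplacian FWlm}).

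For the $s=0$ base case, orthonormality on each fixed sphere $\{|x|=r\}$ immediately yields
\[
\|\mathbf{F}(r,\cdot,\cdot)\|_{L^{2}(\mathbb{S})}^{2}=\sum_{l,m}\bigl(|A_{l,m}(r)|^{2}+|B_{l,m}(r)|^{2}+|C_{l,m}(r)|^{2}\bigr),
\]
and integrating against $r^{2}\,dr$ on $(0,1)$ produces the claimed two-sided bound. The tangential-trace case at $r=1$ for $s=0$ is analogous.

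For general $s\geq 1$ I would argue by induction, using the elliptic characterization $\|\mathbf{F}\|_{H^{s}(\mathbb{B})}^{2}\sim\|\mathbf{F}\|_{H^{s-2}(\mathbb{B})}^{2}+\|\Delta\mathbf{F}\|_{H^{s-2}(\mathbb{B})}^{2}$ modulo boundary contributions. Formulas (\ref{eq:laplacian FVlm})--(\ref{eq:laplacian FWlm}) show that $\Delta\mathbf{F}$ decomposes coefficient-wise into the scalar radial operators $L_{l+1}[A_{l,m}]$, $L_{l}[B_{l,m}]$, $L_{l-1}[C_{l,m}]$. Applying the scalar Sobolev-equivalence lemma coefficient by coefficient, and expanding $L_{k}[F]=F''+2r^{-1}F'-k(k+1)r^{-2}F$, the combination of radial-derivative $L^{2}$ norms and factors $k(k+1)\sim l^{2}$ from the zeroth-order term produces exactly the weighted sum $\sum_{j=0}^{s}(1+l^{2(s-j)})\int_{0}^{1}|D_{r}^{j}A_{l,m}|^{2}r^{2}\,dr$, with the analogous expressions for $B_{l,m},C_{l,m}$; summing over $l,m$ recovers the stated equivalence.

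The trace inequality follows the same pattern: apply the scalar $H^{s+1/2}(\mathbb{S})$-estimate from the preceding lemma to each coefficient evaluated at $r=1$, noting that the extra power $l^{2s+1}$ arises because $\Delta_{\omega}$ has eigenvalue $-l(l+1)$ on each of $\mathbf{V}_{l,m},\mathbf{X}_{l,m},\mathbf{W}_{l,m}$, so every tangential derivative counted by the $H^{s+1/2}$-seminorm trades for one factor of $l$. The main bookkeeping obstacle is handling the index shifts $l\pm 1$ produced by $L_{l\pm 1}$: one must verify that $(1+(l\pm 1)^{2k})$ and $(1+l^{2k})$ are uniformly comparable across the full summation. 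This is routine since $|l\pm 1|\sim l$ for $l\geq 1$ and the finitely many low-$l$ contributions are absorbed into the constants $c_{1},c_{2}$, but it is the only place where one must be slightly careful to preserve the precise form of the stated bounds.
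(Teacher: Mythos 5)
The paper itself does not prove this lemma; it is quoted verbatim from Lemmas 11.1 and 11.2 of \cite{Friedman:2002en}, so your argument has to stand on its own, and as written it has two genuine gaps. First, the induction hinge $\|\mathbf{F}\|_{H^{s}(\mathbb{B})}^{2}\sim\|\mathbf{F}\|_{H^{s-2}(\mathbb{B})}^{2}+\|\Delta\mathbf{F}\|_{H^{s-2}(\mathbb{B})}^{2}$ ``modulo boundary contributions'' is not a usable equivalence: on a bounded domain with no boundary condition it is simply false (take $\mathbf{F}$ with harmonic coefficients, e.g.\ $A_{l,m}(r)=c_{l}r^{l+1}$ chosen so that $\mathbf{F}\in L^{2}(\mathbb{B})\setminus H^{2}(\mathbb{B})$ while $\Delta\mathbf{F}=0$), and the boundary terms you defer are precisely what the lemma has to control, since the right-hand side of the claimed equivalence contains no trace norms. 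Moreover an induction in steps of two needs the case $s=1$ as a second base case, and your $s=0$ orthonormality identity does not provide it.

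Second, the reduction ``apply the scalar Sobolev-equivalence lemma coefficient by coefficient'' is not legitimate as stated: the scalar lemma concerns expansions in the scalar harmonics $Y_{l,m}$, whereas the spherical-frame components of $\mathbf{V}_{l,m},\mathbf{X}_{l,m},\mathbf{W}_{l,m}$ involve $\partial_{\theta}Y_{l,m}$ and $Y_{l,m}/\sin\theta$, and the frame $\mathbf{e}_{\theta},\mathbf{e}_{\varphi}$ is singular at the poles, so the components of $\mathbf{F}$ in that frame are not of the form to which the scalar lemma applies. The standard repair is to pass to Cartesian components: each of $\mathbf{V}_{l,m},\mathbf{X}_{l,m},\mathbf{W}_{l,m}$ has Cartesian components that are fixed linear combinations of scalar harmonics of degrees $l-1$, $l$, $l+1$ with coefficients bounded uniformly in $l,m$, after which the scalar lemma applies componentwise and the index-shift comparability $(1+(l\pm1)^{2k})\sim(1+l^{2k})$ that you correctly flag finishes the bookkeeping; this reduction (or an equivalent direct computation, as in \cite{Friedman:2002en}) is the missing core of the proof. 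The same issue affects the boundary statement: the case $s=0$ there is $H^{1/2}(\mathbb{S})$ with weight $1+l$, not an $L^{2}$ orthogonality identity, so it is not ``analogous'' to your interior base case, and the heuristic that each tangential derivative trades for a factor of $l$ does not by itself justify the half-integer norm equivalence.
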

\begin{acknowledgement*}
The authors are grateful to the anonymous referee for his/her careful
reading and valuable comments on this paper. Marco A. Fontelos is
partially supported by Grants MTM2011-26016 from the Spanish Ministry
of Science. Hyung Ju Hwang is partially supported by the Basic Science
Research Program (2010-0008127, 2013053914) through the National Research
Foundation of Korea (NRF).
\end{acknowledgement*}
\bibliographystyle{abbrv}
\phantomsection\addcontentsline{toc}{section}{\refname}\bibliography{MAIN}

\end{document}